\let\oldnl\nl 
\newcommand{\nonl}{\renewcommand{\nl}{\let\nl\oldnl}}
\newcommand{\cO}{\mathcal{O}}
\newcommand{\cT}{\mathcal{T}}
\newcommand{\cG}{\mathcal{G}}
\newcommand{\cA}{\mathcal{A}}
\newcommand{\cL}{\mathcal{L}}
\newcommand{\RR}{\mathbb{R}}
\newcommand{\argmin}{\operatornamewithlimits{argmin}}
\newtheorem{theorem}{Theorem}[section]
\newtheorem{condition}{Condition}[section]
\newtheorem{corollary}[theorem]{Corollary}
\newtheorem{lemma}[theorem]{Lemma}
\newtheorem{definition}[theorem]{Definition}
\newtheorem{example}{Example}[section]
\newtheorem{assumption}[theorem]{Assumption}
\newtheorem{remark}[theorem]{Remark}
\title{First-Order Algorithms Without Lipschitz Gradient:\\ A Sequential Local Optimization Approach}
\author{Junyu Zhang$^{\dag\dag}$ and Mingyi Hong$^\dag$ \thanks{$^{\dag\dag}$ Department of Industrial Systems Engineering and Management, National University of Singapore, junyuz@nus.edu.sg \\
	    $^\dag$ Department of Electrical and Computer Engineering, University of Minnesota, Twin Cities,  mhong@umn.edu }} 
\begin{document}
	\date{}
	\maketitle
	
\begin{abstract}
	Most first-order methods rely on the global Lipschitz continuity of the objective gradient, which fails to hold in many problems. This paper develops a sequential local optimization (SLO) framework for first-order algorithms to optimize problems without Lipschitz gradient. Operating on the assumption that the gradient is {\it locally} Lipschitz continuous over any compact set, SLO develops a careful scheme to control the distance between successive iterates. The proposed framework can easily adapt to the existing first-order methods such as projected gradient descent (PGD), truncated gradient descent (TGD), and a parameter-free variant of Armijo line search. We show that SLO requires $\mathcal{O}(\epsilon^{-1} \mathcal{L}_1(Y))$ gradient evaluations to find an $\epsilon$-stationary point, where $Y$ is certain compact set with  $\cO(\epsilon^{-1/2})$ radius, and $\mathcal{L}_i(Y)$ denotes the Lipschitz constant of the $i$-th order derivatives in $Y$. It is worth noting that our analysis provides the first non-asymptotic convergence rate for the (slight variant of) Armijo line search algorithm without globally Lipschitz continuous gradient or convexity.  As a generic framework, we also show that SLO can incorporate more complicated subroutines such as a variant of the accelerated gradient descent (AGD) method that can harness the problem's second-order smoothness without Hessian computation, which achieves an improved  $\tilde{\mathcal{O}}\big(\epsilon^{-7/8}\mathcal{L}_1^{1/2}(Y)\mathcal{L}_2^{1/4}(Y)\big)$ complexity. 
\end{abstract}

\section{Introduction}
In this paper, we consider the nonconvex and non-gradient-Lipschitz (No-Grad-Lip) optimization problem $\min\{f(x): x\in\mathbb{R}^n\},$ where  $f(\cdot)$ is nonconvex, continuously differentiable and globally lower bounded, yet its gradient $\nabla f(\cdot)$ may not be globally Lipschitz continuous. 
Many practical examples exhibit  such challenging No-Grad-Lip property, which are listed below. 
\begin{example}[Tensor Decomposition]
	\label{example:SymTD}
	Let $\cT\in\RR^{d\times \cdots\times d}$ be a $k$-dimensional tensor. Then the symmetric tensor decomposition in CP format, see e.g. \cite{kolda2009tensor,comon2008symmetric}, solves the following problem
	\begin{equation}
		\label{prob:SymTD}
		\mathop{\mathrm{minimize}}_{x_1,\gamma_1,...,x_m,\gamma_m} \Big\|\cT - \sum_{i=1}^m\gamma_i\cdot \underbrace{x_i\otimes\cdots\otimes x_i}_{k\,\,x_i's}\Big\|^2,
	\end{equation}
	where $x_i\in\RR^d$ are vectors, $\gamma_i$ are scalars, and $\otimes$ denotes the Kronecker product.
\end{example} 
The Hessian of the objective function grows unbounded as the $\|x_i\|$'s grow, which prevents a globally Lipschitz gradient. Similar issues also exist in non-symmetric tensor/matrix factorization. 

\begin{example}[Unsupervised Autoencoder Training]
	\label{example:autoencoder}
	Let $X\in\RR^{d\times N}$ be the data matrix and let $W_i$, $1\leq i \leq m$ be $m$ weight matrices. Then the autoencoder training \cite{hinton2006reducing} can be formulated as
	\begin{equation}
		\label{prob:autoencoder}
		\mathop{\mathrm{minimize}}_{W_1,...,W_m}\,\, \Big\|X - \sigma\big(W_m\cdots\sigma\big(W_2\,\sigma\big(W_1 X\big)\big)\cdots\big)\Big\|^2,
	\end{equation}
	where $\sigma(\cdot)$ is an activation function that applies elementwisely to any matrix input.
\end{example}
\begin{example}[Supervised Neural Network Training]
	\label{example:supervised-dnn}
	Let $X\in\RR^{d\times N}$ be the data matrix, let $Y\in\RR^{N}$  be the labels vector of $X$, and let $W_i$, $1\leq i \leq m$ be $m$ weight matrices. Then the supervised training of neural network can be written as 
	\begin{equation} 
		\label{prob:supervised-dnn}
		\mathop{\mathrm{minimize}}_{W_1,...,W_m}\,\, \Big\|Y - \sigma\big(W_m\cdots\sigma\big(W_2\,\sigma\big(W_1 X\big)\big)\cdots\big)\Big\|^2
	\end{equation} 
\end{example}
For both Examples \ref{example:autoencoder} and \ref{example:supervised-dnn}, the Hessian explodes as the weights grow unbounded. Consider a single data point toy example of $\min_{w_1,w_2} (\sigma(w_2\sigma(w_1x))-y)^2$ with $(x,y)=(1,0)$, $w_1,w_2\in\RR$, and a smoothed Relu activation $\sigma(s):=\log(1+e^s)$. Direct computation shows that the gradient Lipschitz constants are globally unbounded and vary drastically across different regions. And this phenomenon is also recently verified numerically in training general neural nets, see \cite[Sec. H]{zhang2019gradient-clipping}. \vspace{0.2cm}

\noindent\textbf{Existing Works.} $\,\,$ In convex optimization, there have been a radial reformulation that transforms a convex No-Lip-Grad function to an equivalent convex function with Lipschitz gradient \cite{grimmer2018radial,grimmer2023radial}, and Frank-Wolfe approaches relying on self-concordance condition \cite{carderera2021simple,dvurechensky2022generalized,zhao2020analysis}. It is worth noting that \cite{grimmer2023radial} also extends the radial reformulation method to nonconvex objective functions that satisfy certain strict upper radial property. Though the resulting algorithm does not require the global gradient Lipschitz constant $L$, its convergence analysis does rely on the existence such a constant. For nonconvex No-Grad-Lip problems, several works \cite[etc.]{Bauschke17,Bolte18,Lu18Relative} proposed the notion of $L$-smooth adaptable ($L$-smad) functions, based on which a generalized descent lemma was proved and a Bregman proximal gradient (BPG) method is proposed. Since then, many extensions of BPG have been developed to handle the No-Grad-Lip issue \cite{Reem19,radualex2019optimal,Teboulle18,li2019provable,bompaire2018dual,Hien19, hanzely2018accelerated,davis18high-order}. For example, an inertial variant of BPG was proposed in \cite{hanzely2018accelerated,Hien19} and a stochastic  BPG was developed in \cite{davis18high-order}. Though most existing nonconvex No-Lip-Grad works  focus on BPG type methods, they suffer a few limitations:\vspace{0.2cm}

\noindent\textbf{(1).} BPG methods critically depend on identifying the $L$-smad function, which is only known for a few function classes such as the log-determinant of Fisher information matrix \cite{hanzely2018accelerated} and functions with polynomially growing Hessian \cite{Bolte18,gao2019leveraging,ch2019bregman}, etc. Applications beyond these classes are limited. It is meaningful to derive alternative frameworks that does not rely on the $L$-smad functions. \vspace{0.2cm}

\noindent\textbf{(2).} BPG methods only provide an $\cO(1/T)$ convergence of the Bregman divergence $D_h(x^{k+1},x^k)$, while how $D_h(x^{k+1},x^k)$ relates to $\|\nabla f(x^k)\|^2$ has yet to be discussed. Consider $d$-th order polynomial problems such as Example \ref{example:autoencoder} and \ref{example:supervised-dnn} with $\sigma(x)\equiv x$, \cite{Lu18Relative,li2019provable} suggest setting $h(x) = \frac{\alpha}{2}\|x\|^d_2 + \frac{\sigma}{2}\|x\|^2_2,$ for some $\alpha, \sigma>0$. In this case, we show that $\frac{\|\nabla f(x^k)\|^2}{\|x^k\|^{d-2}}\big/D_h(x^{k+1},x^k) = \Theta(1)$ under appropriate conditions. If BPG can show all its iterations until finding an $\epsilon$-solution are bounded by some $R_\epsilon>0$ that may potentially depend on $\epsilon$, then BPG will result in an $\cO\left(\epsilon^{-1}R_\epsilon^{d-2}\right)$ complexity for finding $\epsilon$-solution. Unfortunately, existing BPG analysis does not provide such an $R_\epsilon$. In fact, for order-$d$ polynomial optimization, if $\|x^k\|^{d-2}$ grows faster than $\Omega(k)$, then the $\cO(1/k)$ convergence of $D_h(x^{k+1},x^k)$ does not imply the convergence of $\|\nabla f(x^k)\|^2$ without additional assumptions.   
In this paper, we design an algorithm and the corresponding analysis techniques that guarantees such a  bounded $R_\epsilon$ for the iterates until finding an $\epsilon$-solution.    \vspace{0.2cm}

\noindent\textbf{Contributions.}$\,\,$ We list the main contribution of our paper as follows. 
\begin{itemize}
	\item We develop a sequential local optimization (SLO)  framework to solve the No-Lip-Grad problems. It can easily incorporate existing first-order methods like PGD and TGD, providing an alternative to BPG method when an appropriate $L$-smad function is not available.
	\item We show that algorithms covered by the SLO framework only require certain {\it local} gradient Lipschitz constants to determine the algorithmic parameters. In case such local knowledge is not accessible, we design a completely parameter-free method that combines the Armijo line search (see e.g. \cite{armijo1966minimization,bertsekas99,nocedal2006numerical}) with a novel normalized search upper bound. To our best knowledge, this result is the first finite iteration convergence rate analysis for the Armijo type line search algorithm without global Lipschitz gradient or convexity conditions. 
	\item We perform rigorous iteration complexity analysis to the  algorithms covered by SLO. To find an $\bar x$ s.t. $\|\nabla f(\bar x)\|^2\leq \epsilon$, most of the proposed methods need $\mathcal{O}(\epsilon^{-1} \mathcal{L}_1(Y))$ gradient evaluations, where $Y$ is some compact set with $\cO(\epsilon^{-1/2})$ radius that contains all the iterations until finding $\bar{x}$. Moreover, by constructing a hard instance for which all the $\epsilon$-solutions are $\Omega(\epsilon^{-1/2})$ away from the initial point, we prove the tightness of our radius upper bound of the set $Y$. 
	\item We show that our SLO framework is generic enough to incorporate complicated subroutines such as an AGD variant enhanced with the ``convex-until-proven-guilty'' technique, which further improves the complexity dependency on $\epsilon$ and $\mathcal{L}_1(Y)$ to $\epsilon^{-7/8}$ and $\left(\mathcal{L}_1(Y)\right)^{1/2}$, respectively.
\end{itemize} 

\section{The Sequential Local Optimization framework}
In this section, we  formally setup the basic assumptions, the Sequential Local Optimization (SLO) framework, and a convergence analysis of the general SLO framework.
\subsection{The basic assumptions}
To formalize the discussion, we make the following assumptions to the objective function $f(\cdot)$. 
\begin{assumption}
	\label{assumption:LowerBound}
	{\bf (1).} $f(\cdot)$ is lower bounded: $f^*:=\inf_x f(x)>-\infty$. {\bf (2).} $f(\cdot)$ is continuously differentiable, and $\nabla f(x)$ is locally Lipschitz continuous everywhere but not globally Lipschitz continuous. Specifically, $\nabla f(\cdot)$ is $\cL_1(C)$-Lipschitz continuous within any \textbf{compact} set $C\subsetneq \mathbb{R}^n$; w.l.o.g.,  we assume $\cL_1(C)\ge 1, \forall~ C\subsetneq \mathbb{R}^n$;
	{\bf (3).} For any ball area $B(x,r)$ with moderate radius $r = \cO(1)$,  we assume $\cL_1(B(x,r))$ is either explicitly known or can be estimated efficiently. 
\end{assumption}
For designing accelerated method, we also assume the second-order local Lipschitz continuity. 
\begin{assumption}
	\label{assumption:growth_fun_2}
	$f(\cdot)$ is second-order continuously differentiable. For any compact set $C\subsetneq\mathbb{R}^n$, the Hessian $\nabla^2 f(\cdot)$ is $\cL_2(C)$-Lipschitz continuous in $C$: $\|\nabla^2 f(x) - \nabla^2 f(y)\|\le \cL_2(C)\|x-y\|, \forall~x, y\in C.$
	W.l.o.g., we assume $\cL_2(\cdot)\geq1$. For any ball area $B(x,r)$ with moderate radius $r = \cO(1)$, we assume $\cL_2(B(x,r))$ is either explicitly known or can be estimated efficiently.
\end{assumption}

These assumptions are very general and are satisfied by many problems including Examples \ref{example:SymTD}, \ref{example:autoencoder} and \ref{example:supervised-dnn}. It is also easy to see that $\cL_1(C_1)\leq \cL_1(C_2),$ $\cL_2(C_1)\leq \cL_2(C_2)$, for $\forall~C_1\subseteq C_2\subsetneq \RR^n$. Finally, it is worth noting that estimating $\cL_1(C)$ or $\cL_2(C)$ for arbitrary $C$ is not easy in general. Yet in this paper, we only require its accessibility in small ball area with $O(1)$ radius, making the heuristic estimation of $\cL_1(C)$ or $\cL_2(C)$ much easier. Our experiments also verify this point. Moreover, in many problems, these constants can actually be  computed analytically.

\subsection{The SLO framework}
Before describing the SLO framework, let us illustrate why the classical GD analysis fails in No-Grad-Lip problems. By Assumption \ref{assumption:LowerBound}, $\nabla f(\cdot)$ is $\cL_1\big(B(x_0,r)\big)$-Lipschitz continuous within any ball $B(x_0,r)$. Consider the GD update $x_{t+1} = x_t - \frac{\nabla f(x_t)}{\cL_1\!(B(x_0,r))}.$
If we set $R_T \!\geq\! \max_{0\leq t\leq T}\{\|x_t-x_0\|\}$, then $\{x_t\}_{t=0}^T\subset B(x_0,R_T)$. Setting  $r = R_T$ in the GD update and applying classical analysis yields 
$$f(x_{t+1}) - f(x_t) \leq - \frac{1}{2\cL_1\big(B(x_0,R_T)\big)}\|\nabla f(x_t)\|^2\quad\mbox{for}\quad 0\leq t\leq T-1,$$
see e.g. \cite{bertsekas99}. Averaging the above inequality for $t = 0,1,...,T-1$ gives
\begin{align}\label{eqn:GD-bound}
	\min_{0\leq t\leq T-1} \left\{\|\nabla f(x_t)\|^2\right\} \leq \frac{2\cL_1\big(B(x_0,R_T)\big)\cdot (f(x_0) - f(x^*))}{T}.
\end{align}
This derivation recovers the $\cO(1/T)$ convergence of GD \cite{bertsekas99} if $\cL_1(\cdot)$ is constant. However, if $\cL_1\big(B(x_0,R_T)\big)$ depends on the size of  $B(x_0,R_T)$ and the level set of $f(\cdot)$ is unbounded, the above argument no longer holds. This is because $R_T$ and $\cL_1\big(B(x_0,R_T)\big)$ may grow unbounded as $T\to\infty$. 

To resolve this issue, we propose the following {\it sequential local optimization} (SLO) framework (Algorithm \ref{alg:Meta}), which is an episodic meta scheme that runs a certain subroutine $\cA$ to perform optimization within a predetermined compact region where one can efficiently estimate the Lipschitz constants to initialize $\cA$. The framework is carefully designed so that SLO can largely preserve the structure of each subroutine $\cA$, which enables SLO to incorporate subroutines with complicated mechanism and analyze them under a unified framework.  

\begin{algorithm2e}
	\DontPrintSemicolon
	\caption{A Sequential Local Optimization (SLO) Framework}
	\label{alg:Meta}
	\textbf{input:} \hspace{0.1cm}
	initial point $x_0^1$; tolerance $\epsilon>0$; constants $D>0$, $d\geq0$; subroutine $\cA$.=\\
	\textbf{default:} $L_1^\tau = L_2^\tau = \textbf{null}$, $\tau = 1,2,3,\cdots$\\	
	\For{$\tau = 1,2,3,\cdots$}{ 
		\textbf{if} $\,\,L_1^\tau$ is required by $\cA\,\,$ \textbf{then} $\,\,\text{Estimate}\,\, L_1^\tau = \cL_1\big(B(x^\tau_0,D)\big)$ \,\,\textbf{end}\\
		\textbf{if} $\,\,L_2^\tau$ is required by $\cA\,\,$ \textbf{then} $\,\,\text{Estimate}\,\, L_2^\tau = \cL_2\big(B(x^\tau_0,D)\big)$ \,\,\textbf{end}\\
		\For{$k = 0,1,2,\cdots$}{ 
			\textbf{if}$\,\,\|\nabla\! f(x^\tau_k)\|\!<\!\sqrt{\epsilon}\,\,$  \textbf{then}\,\, \textbf{return}($x^\tau_k$)\,\, \textbf{end} \,\,\,\,\,\,/*Terminate SLO if $\|\nabla\! f(x^\tau_k)\|$ is small*/\\ 
			Generate $x^{\tau}_{k+1} = \cA\left(f, x^\tau_{k}; L_1^{\tau},L_2^{\tau}; x^\tau_0,D,d\right),$ 
			which satisfies 
			\begin{equation}\label{eq:truncated}
				\|x^{\tau}_{0} - \cA\left(f, x^\tau_{k}; L_1^{\tau},L_2^{\tau}; x^\tau_0,D,d)\right\|\le D. 
			\end{equation}
			
			\textbf{if}$\,\,\|x^\tau_{k+1} \!-  x^\tau_0\|\!\in\![D-d, D]\,\,$ \textbf{then}\,\, Set $x^{\tau+1}_0 = x^{\tau}_{k+1}$ and \textbf{break} the for loop\,\, \textbf{end}\\
			/****End the $\tau$-th epoch, if $x^\tau_{k+1}$ enters a margin of $B(x^\tau_0,D)$****/
		}
	} 
\end{algorithm2e}  

In details, Algorithm \ref{alg:Meta} is divided into epochs based on the distance from $x^\tau_0$, the first point of each epoch $\tau$.  In each epoch, we fix a region $B(x^\tau_0, D)$ and estimate Lipschitz constants $L_1^{\tau}$ (and $L_2^{\tau}$ if needed) for the subroutines. Then the classical first-order methods remain valid in this region, except that, they may need to be truncated in order not move far away from $x^\tau_0$ (cf. \eqref{eq:truncated}). If $x^\tau_k$ is not $\epsilon$-stationary, then it either remains in $B(x^\tau_0,D-d)$ or it enters $\{x: \|x-x^\tau_0\|\in[D-d, D]\}$. For the latter case, we  end the epoch and set it as the first point of the next epoch. Next, let us provide  a generic convergence analysis for Algorithm \ref{alg:Meta} and analyze several first-order subroutines.

\subsection{Convergence analysis of the SLO framework.}
To analyze SLO, we require the subroutine $\cA$ to satisfy the following generic descent condition. 
\begin{condition}
	\label{condition:sufficient} {\it \bf (Sufficient Descent)}
	Let $L_1^{\tau}, L_2^{\tau}$ and $\{x^\tau_k\}^{K_\tau}_{k=0}$ be generated by a subroutine $\cA$ during the $\tau$-th epoch of Algorithm \ref{alg:Meta}. Suppose $x^\tau_{K_\tau}$ is the last iterate of its epoch. Then there exist functions $C_1\left(\cdot\right)>0$ and $C_2\left(\cdot\right)>0$ such that 
	\begin{equation}
		\label{cond-1}
		f(x^\tau_k) - f(x^\tau_{k-1}) \leq - C_1\left(L_1^{\tau}, L_2^{\tau}, \epsilon\right)\cdot\|x^\tau_k - x^\tau_{k-1}\|^2, \quad\mbox{for}\quad k = 1,2,\cdots,K_\tau, 
	\end{equation}
	\begin{equation}
		\label{cond-2}
		f(x^\tau_k) - f(x^\tau_{k-1})\leq-C_2\left(L_1^{\tau}, L_2^{\tau}, \epsilon\right),\quad\mbox{for}\quad k = 1,2,\cdots,K_\tau-1. 
	\end{equation}
\end{condition}
\noindent For notational simplicity, we denote  
$C_1^\tau:=C_1\left(L_1^{\tau}, L_2^{\tau}, \epsilon\right)$ and $C_2^\tau:= C_2\left(L_1^{\tau}, L_2^{\tau}, \epsilon\right)$ throughout the following sections. Based on Condition \ref{condition:sufficient}, we next show that a descent of $\frac{\sqrt{C_1^\tau  C_2^\tau}}{2} D$ is guaranteed if the iterates of epoch $\tau$ approaches the boundary of the local region, which always happens if an $\epsilon$-stationary point is not discovered by Algorithm \ref{alg:Meta} during this epoch.
\begin{lemma}[Per-epoch descent]
	\label{lemma:meta-epoch}
	Let $\{x^\tau_k\}^{K_\tau}_{k=0}$ be   generated by Algorithm \ref{alg:Meta} as epoch $\tau$, with  $\|x^\tau_{\!K_\tau}\!-\!x^\tau_{0}\|\!\in\!\left[D-d,\! D\right]$. 
	If Condition \ref{condition:sufficient} holds and $D\!\geq\!\sqrt{\frac{C_2^\tau}{4C_1^\tau}} \!+\! 2d$, then  
	$f(x^\tau_{K_\tau}) \!-\! f(x^\tau_0) \!\leq \!-\! \frac{\sqrt{C_1^\tau  C_2^\tau}}{2}D.$
\end{lemma}
\noindent\proof{Proof.}
For $k = 1,2,\cdots,K_\tau-1$, summing up \eqref{cond-1} and \eqref{cond-2} respectively yields 
$$f(x^\tau_{K_\tau-1}) - f(x^\tau_0)   \leq    -C^\tau_1\sum_{k=1}^{K_\tau-1}\big\|x^\tau_k-x^\tau_{k-1}\big\|^2
\stackrel{(i)}\leq  -\frac{C^\tau_1}{K_\tau-1}\big\|x^\tau_{K_\tau-1}-x^\tau_{0}\big\|^2.$$
$$f(x^\tau_{K_\tau-1}) - f(x^\tau_0)  \leq -(K_\tau-1)
C^\tau_2.$$  
where $(i)$ uses Jensen's inequality. 
Combining the above two inequalities yields
$$f(x^\tau_{K_\tau-1}) - f(x^\tau_0) \leq -\frac{1}{2}\left(\frac{C^\tau_1}{K_\tau-1}\big\|x^\tau_{K_\tau-1}-x^\tau_{0}\big\|^2 + (K_\tau-1) 
C_2^\tau\right)\overset{(i)}{\leq} - \sqrt{C_1^\tau  C_2^\tau}\big\|x^\tau_{K_\tau-1}-x^\tau_{0}\big\|,$$
where (i) is due to $\frac{1}{2}(a+b)\geq \sqrt{ab}$, $\forall a,b\geq0$.
For the last iteration, \eqref{cond-2} of Condition \ref{condition:sufficient} implies
$f(x^\tau_{K_\tau}) - f(x^\tau_{K_\tau-1})\leq -C_1^\tau \big\|x^\tau_{K_\tau} - x^\tau_{K_\tau-1}\big\|^2.$
Consequently, we have
\begin{eqnarray} 
	f(x^\tau_{K_\tau}) - f(x^\tau_0) & \leq & -\sqrt{C_1^\tau  C_2^\tau}\big\|x^\tau_{K_\tau-1}-x^\tau_{0}\big\| -C_1^\tau\big\|x^\tau_{K_\tau} - x^\tau_{K_\tau-1}\big\|^2\nonumber\\
	& \leq & -\sqrt{C_1^\tau C_2^\tau}\big \|x^\tau_{K_\tau-1}-x^\tau_0\big\| -C_1^\tau \Big(\big\|x^\tau_{K_\tau} - x^\tau_{0}\big\|-\big\|x^\tau_{K_\tau-1}-x^\tau_0\big\|\Big)^2\nonumber\\
	& \leq & - \min_{\omega\geq0}\left\{\sqrt{C_1^\tau  C_2^\tau}\cdot\omega + C_1^\tau \Big(\big\|x^\tau_{K_\tau} - x^\tau_{0}\big\|-\omega\Big)^2 \right\}.\nonumber
\end{eqnarray}
Because $\|x^\tau_{\!K _\tau}\!- x^\tau_{0}\|\geq \sqrt{\frac{C^{\tau}_2}{4C^{\tau}_1}}$, the minimum is achieved at $\omega^* = \|x^\tau_{K_\tau} - x^\tau_{0}\|-\sqrt{\frac{C^{\tau}_2}{4C^{\tau}_1}}$.Therefore, 
\begin{eqnarray*}
	\label{lm:meta-epoch-1}
	f(x^\tau_{K_\tau}) - f(x^\tau_0) \leq  -\sqrt{C_1^\tau  C_2^\tau}\cdot\omega^* - C_1^\tau \Big(\big\|x^\tau_{K_\tau} - x^\tau_{0}\big\|-\omega^*\Big)^2 = - \sqrt{C_1^\tau  C_2^\tau} \big\|x^\tau_{K_\tau} - x^\tau_{0}\big\| + \frac{C_2^\tau}{4}.
\end{eqnarray*}
Note that $\|x^\tau_{K_\tau} - x^\tau_{0}\| \geq D - d,$ and  $D\geq\frac{1}{2}\sqrt{\frac{C_2^\tau}{C_1^\tau}} + 2d$, the above inequality  reduces to
$$\!\!\!\!\!f(x^\tau_{\!K_{\!\tau}}\!)  -\! f(x^\tau_0) \leq\! -\sqrt{C_1^\tau  C_2^\tau} \left(\!\|x^\tau_{\!K_{\!\tau}} \!\!- x^\tau_{0}\| \!- \frac{1}{4}\sqrt{\frac{C_2^\tau}{C_1^\tau}}\,\right)\!\!\leq\! -\sqrt{C_1^\tau C_2^\tau}\left(\!D  - \frac{1}{4}\sqrt{\frac{C_2^\tau}{C_1^\tau}}- d\right)\!\!\leq\! -\frac{\sqrt{C_1^\tau C_2^\tau}}{2} D.$$
This completes the lemma.  $\qquad\qquad\qquad\qquad\qquad$ $\qquad\qquad\qquad\qquad\qquad$ $\qquad\qquad\qquad\qquad$ 
\endproof

Let  $N_\cA^\tau$ denote the number of gradient evaluations in each execution of $\cA$ in epoch $\tau$. For example, if $\cA$ is a single gradient step, then $N_\cA^\tau = 1$; if $\cA$ compute the output by solving a subproblem, then $N_\cA^\tau$ is the number of gradient evaluations used to solve this subproblem. With this notation, we provide the following theorem. 
\begin{theorem}
	\label{theorem:meta-complexity}
	Let Assumption \ref{assumption:LowerBound} hold. If $L_2^\tau$ is required by $\cA$ in addition, we also require Assumption \ref{assumption:growth_fun_2} to hold. Let $\big\{x^\tau_k\big\}_{k=0,...,K_\tau}^{\tau = 1,2,\cdots}$ be generated by the Algorithm \ref{alg:Meta}. If the algorithmic parameters and the subroutine $\cA$ can simultaneously guarantee Condition \ref{condition:sufficient} and the inequality
	\begin{equation}
		\label{thm:meta-complexity-0}
		\sum_{\tau=1}^\infty \frac{\sqrt{C_1^\tau C_2^\tau}}{2} D = +\infty,\qquad\mbox{with}\qquad D\geq \frac{1}{2}\sqrt{\frac{C_2^\tau}{C_1^\tau}} + 2d,\quad \forall \tau,
	\end{equation}
	then it takes at most $T$ epochs to output a point $x^\tau_k$ s.t. $\|\nabla f(x^\tau_k)\|^2\leq\epsilon$, where $T$ satisfies
	\begin{equation}
		\label{thm:meta-complexity-1}
		T \leq \inf \left\{\tau+1: \sum_{k=1}^\tau \frac{\sqrt{C_1^k C_2^k}}{2} D\geq f(x^1_0) - f^*\right\} <+\infty.
	\end{equation}
	The total number of gradient evaluations of Algorithm \ref{alg:Meta} is at most  
	\begin{equation}
		\label{thm:meta-complexity-2}
		\sum_{\tau=1}^T K_\tau N_\cA^\tau \leq \max_{1\leq \tau\leq T}\left\{\frac{N_\cA^\tau}{C_2^\tau}\right\}\cdot \left(f(x^1_0)-f^*\right) + \sum_{\tau=1}^{T}N_\cA^\tau.
	\end{equation}
	Its dependence on $\epsilon$ and $\cL_1(\cdot)$ (and $\cL_2(\cdot)$ if used) is characterized by $T$, $C_1^\tau$ and $C_2^\tau$.
\end{theorem}
\proof{Proof.}
By the termination criterion of Algorithm \ref{alg:Meta}, we know that $\|\nabla f(x^\tau_k)\|\geq\sqrt{\epsilon}$ for $0\leq k\leq K_\tau$ and $1\leq \tau\leq T-1$. Therefore, for the first $T-1$ epochs during which the algorithm doesn't terminate, Lemma \ref{lemma:meta-epoch} indicates that 
$$f(x^\tau_{K _\tau}) - f(x^\tau_0) \leq -\sqrt{C_1^\tau C_2^\tau}D/2\qquad\mbox{for} 1\leq \tau\leq T-1.$$ 
Because $x^{\tau+1}_0:=x^\tau_{K_\tau}$ in Algorithm \ref{alg:Meta}, summing up this inequality yields 
$$\sum_{\tau=1}^{T-1} \frac{\sqrt{C_1^\tau C_2^\tau}}{2} D \leq f(x^1_0) - f(x^{T-1}_{K_{T-1}})\leq f(x^1_0) - f^*.$$
By the first inequality in \eqref{thm:meta-complexity-0}, $T$ must be finite and inequality \eqref{thm:meta-complexity-1} holds true. Note that except for the last iterate of each epoch, all the other iterates satisfy \eqref{cond-2} of Condition \ref{condition:sufficient}.  Therefore, the maximum number of iterations of the $\tau$-th epoch satisfies
$$K_\tau \leq \frac{x^\tau_{K_\tau} - f(x^\tau_0)}{C_2^\tau} + 1 = \frac{f(x^{\tau+1}_0) - f(x^\tau_0)}{C_2^\tau} + 1.$$
The total iteration complexity satisfies the following relation
\begin{eqnarray*}
	\sum_{\tau=1}^T \left(\frac{f(x^\tau_0)-f(x^{\tau}_{K_\tau})}{C_2^\tau} + 1\right) N_\cA^\tau & \leq & \max_{1\leq \tau\leq T}\left\{\frac{N_\cA^\tau}{C_2^\tau}\right\}\cdot\sum_{\tau=1}^{T}\left(f(x^\tau_0)-f(x^{\tau}_{K_\tau})\right) + \sum_{\tau=1}^{T}N_\cA^\tau\\
	& \leq & \max_{1\leq \tau\leq T}\left\{\frac{N_\cA^\tau}{C_2^\tau}\right\}\cdot \left(f(x^1_0)-f^*\right) + \sum_{\tau=1}^{T}N_\cA^\tau,
\end{eqnarray*} 
which completes the proof. $\qquad\qquad\qquad\qquad\qquad$ $\qquad\qquad\qquad\qquad\qquad$ $\qquad\qquad\qquad\qquad$  
\endproof
The above result relies on the satisfaction of Condition \ref{condition:sufficient} and \eqref{thm:meta-complexity-0}. So far it contains generic parameters $C^{\tau}_1$, $C^{\tau}_2$, $N^\tau_{\cA}$ that are difficult to interpret. Next, we verify that Condition \ref{condition:sufficient} for several variants of the GD algorithm, and specialize the above generic result. 

\subsection{The gradient projection subroutine}
The first specific subroutine $\cA$ is the gradient projection algorithm, which is defined as follows. 
\begin{definition}[Gradient projection subroutine]
	For a differentiable function $f$, constants $\eta,r >0$, and a point $x\in B(\bar x,r)$, we define the gradient projection subroutine as 
	\begin{eqnarray}\label{eq:A:GP}
		\cA_{GP}(f,x,\bar x,\eta,r) :=  \begin{cases}
			x - \eta\nabla f(x), & \mbox{ if } \quad x - \eta\nabla f(x) \in B(\bar x,r),\\
			\bar x + \frac{x - \eta\nabla f(x) - \bar x}{\|x - \eta\nabla f(x) - \bar x\|}\cdot r, & \mbox{otherwise,}
		\end{cases}
	\end{eqnarray}
	which is the analytical form of projecting a gradient descent step to a Euclidean ball $B(\bar x, r)$.
\end{definition}
Note that the nature of the gradient projection  does not require any second-order information of $f$, thus $L_2^\tau$ remains its default value in Algorithm \ref{alg:Meta}, i.e., $L_2^\tau = \textbf{null}$. We also set $d= 0 $ and let the subroutine $\cA$ in Algorithm \ref{alg:Meta} be the gradient projection subroutine:
\begin{equation}
	\label{eqn:meta-GP}
	\cA\left(f,x^\tau_k;L^{\tau}_1,L_2^\tau;x^\tau_0, D,d\right): = \cA_{GP}\left(f,x^\tau_k,x^\tau_0,\left(L^{\tau}_1\right)^{-1}, D\right).
\end{equation}
The finite travel condition \eqref{eq:truncated} naturally holds due to the projection. It remains to verify Condition \ref{condition:sufficient} and to calculate the functions $C_1(\cdot)$ and $C_2(\cdot)$, which is provided as follows. 
\begin{lemma}
	\label{lemma:condition-GP}
	Let $\big\{x^\tau_k\big\}_{k=0}^{K_\tau}$ be generated by Algorithm \ref{alg:Meta} in the $\tau$-th epoch, with $d = 0$ and $L_2^\tau = \emph{\textbf{null}}$. Suppose Assumption \ref{assumption:LowerBound} holds and the subroutine $\cA$ is set according to \eqref{eqn:meta-GP}.  If $\|\nabla f(x^\tau_k)\|>\sqrt{\epsilon}$ for $0\leq k\leq K_\tau-1,$ then Condition \ref{condition:sufficient} holds with 
	$C_1\left(L^{\tau}_1,L_2^\tau,\epsilon\right) = \frac{L_1^\tau}{2}$ and $C_2(L^{\tau}_1,L_2^\tau,\epsilon) = \frac{\epsilon}{2L_1^\tau},$
	where $L_1^\tau : = \cL_1\big(B(x^\tau_0,D)\big)$ is the local gradient Lipschitz constant.
\end{lemma} 
\proof{Proof.}
First, let us prove \eqref{cond-1} for $k = 1,2,\cdots ,K_\tau$. Note that the gradient projection $\cA_{GP}(\cdot)$ can be equivalently written as the following optimization problem:
\begin{equation}
	\label{lm:condition-GP-0}
	x^\tau_{k+1} = \argmin_{\|x-x^\tau_0\|\leq D} f(x^\tau_k) \!+\! \left\langle\nabla f(x^\tau_k), x- x^\tau_k\right\rangle \!+\! \frac{L_1^\tau}{2}\|x - x^\tau_k\|^2.
\end{equation} 
Note that the function $f$ has $L_1^\tau$-Lipschitz continuous gradient in $B(x^\tau_0,D)$, and the subroutine $\cA_{GP}(\cdot)$ guarantees that the iterates generated in $\tau$-th epoch are restricted in $B(x^\tau_0, D)$. It follows that the Lipschitz continuity property within the set $B(x^\tau_0,D)$ indicates 
\begin{eqnarray}
	\label{lm:condition-GP-1}
	f(x^\tau_{k+1}) \leq f(x^\tau_{k}) + \langle \nabla f(x^\tau_k),x^\tau_{k+1}-x^\tau_k\rangle + \frac{L_1^\tau}{2}\|x^\tau_{k+1} - x^{\tau}_k\|^2.
\end{eqnarray}
By the $L_1^\tau$-strong convexity of the optimization problem in \eqref{lm:condition-GP-0}, we also have 
\begin{eqnarray}
	\label{lm:condition-GP-2}
	f(x^\tau_{k}) + \langle \nabla f(x^\tau_k),x^\tau_{k+1}-x^\tau_k\rangle + \frac{L_1^\tau}{2}\|x^\tau_{k+1} - x^{\tau}_k\|^2 \leq  f(x^\tau_{k}) -\frac{L_1^\tau}{2}\|x^\tau_{k+1} - x^\tau_k\|^2. 
\end{eqnarray}
Combining \eqref{lm:condition-GP-1} and \eqref{lm:condition-GP-2} proves \eqref{cond-1} with $C_1\left(L^{\tau}_1,L_2^\tau,\epsilon\right)  = \frac{L_1^\tau}{2}$. 

Next, we prove \eqref{cond-2} for $k \leq K_\tau-1$. Since   epoch $\tau$ ends at the iteration $K_\tau$, by the termination rule of each epoch, we know $\|x^\tau_{k}-x^\tau_0\|< D-d = D$ for $k\leq K_\tau-1$. This indicates that 
$$x^\tau_{k} = x^\tau_{k-1} - \frac{1}{L_1^\tau}\nabla f(x^\tau_{k-1})\in \mathrm{int}\big(B(x^\tau_0, D)\big), \quad k = 1,2,\cdots, K_\tau-1.$$
This is because if $x^\tau_{k'-1} - \frac{1}{L_1^\tau}\nabla f(x^\tau_{k'-1})\notin \mathrm{int}\big(B(x^\tau_0, D)\big)$ for some $k'\leq K_\tau-1$, the $x^\tau_{k'-1}$ will be on the boundary of $B(x^\tau_0,D)$ due to the projection step, leading to a contradiction. That is, the gradient projection subroutine just reduces to the gradient descent step for $k\leq K_\tau-1$. On the other hand, because $\|f(x^\tau_{k})\|>\sqrt{\epsilon}$ for $k\leq K_\tau-1$, we have 
\begin{eqnarray}
	\label{lm:condition-GP-3}
	f(x^\tau_k) & = & f\left(x^\tau_{k-1} - \frac{1}{L_1^\tau}\nabla f(x^\tau_{k-1})\right)\\
	& \leq & f\left(x^\tau_{k-1} \right) + \left\langle\nabla f(x^\tau_{k-1}), - \frac{1}{L_1^\tau}\nabla f(x^\tau_{k-1})\right\rangle + \frac{L^\tau_1}{2}\left\|\frac{1}{L_1^\tau}\nabla f(x^\tau_{k-1})\right\|^2\nonumber\\
	& = & f(x^\tau_{k-1}) - \frac{1}{2L_1^\tau}\left\|\nabla f(x^\tau_{k-1})\right\|^2 \leq   f(x^\tau_{k-1}) - \frac{\epsilon}{2L_1^\tau}\,\,\,.\nonumber
\end{eqnarray}
In conclusion, we prove \eqref{cond-2} with $C_2(L_1^\tau,L_2^\tau,\epsilon) = \frac{\epsilon}{2L_1^\tau}$.  $\qquad\qquad\qquad\qquad\qquad\qquad\qquad\qquad\quad$ 
\endproof
\noindent Substituting $C_1^\tau = \frac{L_1^\tau}{2}$, $C_2^\tau = \frac{\epsilon}{2L_1^\tau}$ and $d = 0$ into Lemma \ref{lemma:meta-epoch} yields the following corollary.

\begin{corollary}
	\label{corollary:GP-epoch-descent}
	Under the settings of Lemma \ref{lemma:condition-GP}. If $\|\nabla f(x^\tau_k)\|>\sqrt{\epsilon}$ for $0\leq k\leq K_\tau-1,$  then $f(x^\tau_{K_\tau}) - f(x^\tau_0) \leq - \frac{\sqrt{\epsilon}D}{4}$
	as long as we choose $D\geq \frac{\sqrt{\epsilon}}{2}$.
\end{corollary}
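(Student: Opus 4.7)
The plan is a direct computation: since the corollary is announced as a substitution of the specific constants from Lemma~\ref{lemma:condition-GP} into the general per-epoch descent result of Lemma~\ref{lemma:meta-epoch}, I would simply verify that the hypotheses of Lemma~\ref{lemma:meta-epoch} are in force and then carry out the arithmetic.

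First I would invoke Lemma~\ref{lemma:condition-GP} to assert that Condition~\ref{condition:sufficient} is satisfied with
\[
C_1^\tau = \frac{L_1^\tau}{2}, \qquad C_2^\tau = \frac{\epsilon}{2L_1^\tau},
\]
and note that the current setting uses $d=0$. Then I would compute the descent coefficient promised by Lemma~\ref{lemma:meta-epoch}, namely
\[
\frac{\sqrt{C_1^\tau \cdot C_2^\tau}}{2} \;=\; \frac{1}{2}\sqrt{\frac{L_1^\tau}{2}\cdot\frac{\epsilon}{2L_1^\tau}} \;=\; \frac{\sqrt{\epsilon}}{4},
\]
so that the per-epoch bound reads $f(x^\tau_{K_\tau}) - f(x^\tau_0) \leq -\frac{\sqrt{\epsilon}\cdot D}{4}$, matching the statement.

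The only (mild) subtlety is the hypothesis $D\geq\frac{1}{2}\sqrt{C_2^\tau/C_1^\tau}+2d$ demanded by Lemma~\ref{lemma:meta-epoch}. Substituting the constants and $d=0$, this becomes
\[
D \;\geq\; \frac{1}{2}\sqrt{\frac{\epsilon/(2L_1^\tau)}{L_1^\tau/2}} \;=\; \frac{\sqrt{\epsilon}}{2 L_1^\tau}.
\]
Here I would exploit the normalization $\cL_1(\cdot)\geq 1$ from Assumption~\ref{assumption:LowerBound}(3), which forces $L_1^\tau \geq 1$, so that $\frac{\sqrt{\epsilon}}{2L_1^\tau}\leq \frac{\sqrt{\epsilon}}{2}$. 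Consequently the assumption $D\geq \sqrt{\epsilon}/2$ stated in the corollary is enough to guarantee the condition required by Lemma~\ref{lemma:meta-epoch}, and the conclusion follows immediately. No real obstacle arises; the only thing one must remember to cite is the lower bound $L_1^\tau \geq 1$ to replace the $L_1^\tau$-dependent radius condition by the simpler uniform bound $D\geq\sqrt{\epsilon}/2$.
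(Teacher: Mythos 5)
Your proposal is correct and follows exactly the route the paper intends: substitute $C_1^\tau=\tfrac{L_1^\tau}{2}$, $C_2^\tau=\tfrac{\epsilon}{2L_1^\tau}$, $d=0$ into Lemma~\ref{lemma:meta-epoch}, compute $\tfrac{\sqrt{C_1^\tau C_2^\tau}}{2}=\tfrac{\sqrt{\epsilon}}{4}$, and use the normalization $\cL_1(\cdot)\geq 1$ from Assumption~\ref{assumption:LowerBound}(3) to reduce the radius condition to $D\geq\sqrt{\epsilon}/2$. The paper gives no further detail, so your explicit verification of the radius hypothesis is, if anything, slightly more careful than the original.
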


\noindent Denote $\Delta_f := f(x^1_0) - f^*$, and $R_T:=\frac{4\Delta_f}{\sqrt{\epsilon}}+D.$ We summarize the complexity result as follows.
\begin{theorem}
	\label{theorem:meta-complexity-GP}
	Suppose Assumption \ref{assumption:LowerBound} holds. Consider Algorithm \ref{alg:Meta} with the gradient projection subroutine $\cA_{GP}$ given in \eqref{eq:A:GP}. If we choose $d = 0$, $D\geq\frac{\sqrt{\epsilon}}{2}$, and specify the parameters by \eqref{eqn:meta-GP}, then  it takes at most $T = \left\lceil\frac{4\Delta_f}{\sqrt{\epsilon} D}+1\right\rceil$ epochs to output a point $x^\tau_k$ s.t. $\|\nabla f(x^\tau_k)\|^2\leq\epsilon$. Moreover, the total number of gradient evaluations is at most  
	$\sum_{\tau=1}^T K_\tau \leq \cO\left(\frac{\Delta_f}{\epsilon}\cdot\cL_1\left(B(x^1_0,R_T)\right)\right).$
\end{theorem} 

\proof{Proof.}
To prove the above result, we only need to specialize Theorem \ref{theorem:meta-complexity} using the constants we obtained for the gradient projection subroutine $\cA_{GP}$. By using the values of $C^\tau_1$ and $C^\tau_2$ in Lemma \ref{lemma:condition-GP}, we know
$\sum_{\tau=1}^\infty \frac{\sqrt{C_1^\tau C_2^\tau}}{2}D = \sum_{\tau=1}^\infty \frac{\sqrt{\epsilon}D}{4} = +\infty.$
Therefore, the first relation \eqref{thm:meta-complexity-0} is satisfied. Also our choice of $D$ satisfies the second inequality in \eqref{thm:meta-complexity-0}. It then follows from \eqref{thm:meta-complexity-1}  that the upper bound for the number of epochs is 
\begin{align}
	T = \left\lceil\frac{2 (f(x^1_0)- f(x^*))}{D \sqrt{C^{\tau}_1 C^{\tau}_2}}+ 1\right\rceil = \left\lceil \frac{4 (f(x^1_0)- f(x^*))}{\sqrt{\epsilon}\cdot D} + 1\right\rceil.
\end{align}
Second, because $\|x^\tau_k - x^\tau_0\| \leq D$ and $x^{\tau+1}_0 = x^\tau_{K_\tau}$, the triangle inequality indicates that 
$$\|x^\tau_0-x^1_0\|\leq\sum_{t=1}^{\tau-1}\|x^{t}_{K_t}-x^t_0\|\leq (\tau-1) D\leq (T-1)D = \frac{4\Delta_f}{\sqrt{\epsilon}}, \quad\quad \forall \tau.$$  
That is, $B(x^\tau_0,D)\subseteq B\big(x^1_0,\frac{4\Delta_f}{\sqrt{\epsilon}} + D\big) = B(x^1_0,R_T)$, we have
$$\max_{1\leq \tau\leq T}\left\{\frac{N_\cA^\tau}{C_2^\tau}\right\} \overset{(i)}{=} \max_{1\leq \tau\leq T}\left\{\frac{2L_1^\tau}{\epsilon}\right\} \leq \frac{2}{\epsilon}\cdot\cL_1\big(B\big(x^1_0,R_T\big)\big),$$
where (i) utilizes the expression of $C_2^\tau$ in Lemma \ref{lemma:condition-GP}, and $N^{\tau}_{\cA}=1$ because  $\cA_{GP}$ only evaluates the gradient once.
Substituting the above inequality into Theorem \ref{theorem:meta-complexity} proves the current result. $\,\,$ 
\endproof
Though any $D\geq\frac{\sqrt{\epsilon}}{2}$ is sufficient for convergence, a small $D = \cO(\sqrt{\epsilon})$ may cause conservative progress and hence too many epochs. This further leads to too many local Lipschitz constant estimations to initialize the epochs.  It will be better to set $D = \Omega(1)$ for numerical efficiency. 

\subsection{The truncated gradient descent (TGD) subroutine}
In addition to the gradient projection subroutine, we also propose a truncated gradient descent (TGD) subroutine to solve the problem, which is formally described as follows. 
\begin{definition}[Truncated gradient descent subroutine]
	For any differentiable function $f$, constant $L,d >0$, and a point $x$. We define the truncated gradient descent subroutine as 
	\begin{eqnarray}
		\cA_{TG}(f,x,L,d) :=  \begin{cases}
			x - \frac{1}{L}\cdot\nabla f(x), & \mbox{ if } \quad\|\nabla f(x)\|\leq L\cdot d,\\
			x - d\cdot\frac{\nabla f(x)}{\|\nabla f(x)\|}, & \mbox{otherwise}.
		\end{cases}
	\end{eqnarray}
\end{definition}
\noindent The following notation is used to denote the TGD subroutine:
\begin{equation}
	\label{eqn:meta-TG}
	\cA\left(f,x^\tau_k;L^{\tau}_1,L_2^\tau; x^\tau_0, D,d\right): = \cA_{TG}\left(f,x^\tau_k,L^{\tau}_1,d\right).
\end{equation}
Compared to the standard normalized gradient descent (NGD) designed for convex No-Lip-Grad problem \cite{grimmer2019convergence}: $x_{t+1}=x_t-d\frac{\nabla f(x)}{\|\nabla f(x)\|}$, which moves a distance of size $d$ in each iteration, the TGD subroutine \eqref{eqn:meta-TG} only truncates the update if a pure gradient descent with stepsize $1/L^{\tau}_1$ exceeds the moving distance limit of $d$. This adaptation over the standard NGD enables TGD to guarantee \eqref{eq:truncated} and Condition \ref{condition:sufficient} simultaneous, which facilitates the analysis of nonconvex No-Lip-Grad problem.  
\begin{lemma}
	\label{lemma:condition-TG}
	Let $\{x^\tau_k\}_{\!k=0}^{\!K_\tau}$ be generated by Algorithm \ref{alg:Meta} as the $\tau$-th epoch, with $d \!\geq\! \sqrt{\epsilon}$, $L_2^\tau \!=\! \emph{\textbf{null}}$. Under Assumption \ref{assumption:LowerBound} and the TGD subroutine \eqref{eqn:meta-TG}, if $\|\nabla f(x^\tau_k)\|>\sqrt{\epsilon}$ for $0\leq k\leq K_\tau-1,$ then Condition \ref{condition:sufficient} holds with $C_1\left(L^{\tau}_1,L_2^\tau,\epsilon\right) = \frac{L_1^\tau}{2}$ and $C_2(L^{\tau}_1,L_2^\tau,\epsilon) = \frac{\epsilon}{2L_1^\tau}.$
\end{lemma} 
\proof{Proof.} We divide the proof  in the following two cases.\\	
\textbf{Case 1.} $\|\nabla f(x^\tau_{k-1})\|\leq L^\tau_1 d$. In this case, $x^\tau_{k}-x^\tau_{k-1} = -\frac{1}{L_1^\tau}\nabla f(x^\tau_{k-1})$. Because all the iterates of this epoch remains in $B(x^\tau_0, D)$, the $L_1^\tau$-Lipschitz continuity of $\nabla f(\cdot)$ in $B(x^\tau_0,D)$ indicates 
\begin{eqnarray*}
	f(x^\tau_{k})  &\leq &  f(x^\tau_{k-1})- \frac{1}{2L_1^\tau}\|\nabla f(x^\tau_{k-1})\|^2  =  f(x^\tau_{k-1})-\frac{L_1^\tau}{2}\|x^\tau_{k}-x^\tau_{k-1}\|^2.
\end{eqnarray*}
Since the $\|\nabla f(x^\tau_{k-1})\|>\sqrt{\epsilon}$, inequality \eqref{lm:condition-GP-3} is still true. That is, $f(x^\tau_{k}) \leq f(x^\tau_{k-1}) -\frac{\epsilon}{2L_1^\tau}.$ 
Combining the above results proves the result under Case 1.\\
\textbf{Case 2.} $\|\nabla f(x^\tau_{k-1})\|>L_1^\tau d$. Under this case we have 
\begin{align}\label{eq:normalized:step}
	x^\tau_k - x^\tau_{k-1} = -d\cdot\frac{\nabla f(x^\tau_{k-1})}{\|\nabla f(x^\tau_{k-1})\|}, \quad \|x^\tau_k - x^\tau_{k-1}\| = d.
\end{align}
It follows that 
\begin{eqnarray*} 
	f(x^\tau_k) & \leq & f\left(x^\tau_{k-1} \right) + \left\langle\nabla f(x^\tau_{k-1}),x^\tau_k - x^\tau_{k-1} \right\rangle + \frac{L^\tau_1}{2}\left\|x^\tau_k - x^\tau_{k-1}\right\|^2\nonumber\\
	& = & f(x^\tau_{k-1}) - d\|\nabla f(x^\tau_{k-1})\| + \frac{L^\tau_1}{2}\cdot d^2\nonumber\\
	& \leq & f(x^\tau_{k-1}) - \frac{L^\tau_1d^2}{2}   \stackrel{\eqref{eq:normalized:step}}=    f(x^\tau_{k-1}) - \frac{L^\tau_1}{2}\|x^\tau_k - x^\tau_{k-1}\|^2.
\end{eqnarray*}
Since we assumed $L_1^\tau\geq1$,  then $d\geq \sqrt{\epsilon}\geq \sqrt{\epsilon}/L_1^\tau$,  we also have $f(x^{\tau}_{k}) -f(x^\tau_{k-1})  \leq - \frac{L_1^\tau d^2}{2} \leq  - \frac{\epsilon}{2L_1^\tau}.$
Combining the above  inequalities proves the lemma under Case 2. $\qquad\qquad\qquad\qquad\quad\,\,\,\,\quad\,\,\,\,$ 
\endproof

\noindent Because $C_1(\cdot)$ and $C_2(\cdot)$ for the $\cA_{TG}$ subroutine are identical to those of the $\cA_{GP}$ subroutine. Therefore, Theorem \ref{theorem:meta-complexity-GP} still holds true  with slightly different choices of parameters $d$ and $D$. 
\begin{theorem}
	\label{theorem:meta-complexity-TG}
	Suppose the Assumptions \ref{assumption:LowerBound} holds true. Consider Algorithm \ref{alg:Meta} with the truncated gradient subroutine $\cA_{TG}$  specified by \eqref{eqn:meta-TG}. If we choose $d \geq \sqrt{\epsilon}$, $D\geq\frac{\sqrt{\epsilon}}{2} + 2d$, then  it takes at most $T = \left\lceil\frac{4(f(x^1_0) - f^*)}{\sqrt{\epsilon} D}+1\right\rceil$ epochs to output a point $x^\tau_k$ s.t. $\|\nabla f(x^\tau_k)\|^2\leq\epsilon$. Moreover, the total computational complexity can be upper bounded by 
	$\sum_{\tau=1}^T K_\tau \leq \cO\left(\frac{\Delta_f}{\epsilon}\cdot\cL_1\left(B(x^1_0,R_T)\right)\right),$
	where we denote $\Delta_f := f(x^1_0) - f^*$, and $R_T:=\frac{4\Delta_f}{\sqrt{\epsilon}}+D.$
\end{theorem}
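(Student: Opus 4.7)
The plan is to reduce the theorem to a direct specialization of the generic complexity bound in Theorem \ref{theorem:meta-complexity}, in exactly the same way that Theorem \ref{theorem:meta-complexity-GP} was proved from the generic bound. Indeed, Lemma \ref{lemma:condition-TG} has already established that under the stated parameter choice, Condition \ref{condition:sufficient} holds with the same constants $C_1^\tau = L_1^\tau/2$ and $C_2^\tau = \epsilon/(2L_1^\tau)$ that we obtained for the gradient projection subroutine. Thus the only thing that really changes between the two theorems is the justification of the truncation requirement \eqref{eq:truncated}, which for $\cA_{NG}$ has to be argued from the step-length bound $\|\cA_{NG}(f,x_k^\tau,L_1^\tau,d) - x_k^\tau\| \leq d$ together with the termination rule of the outer loop.

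The first step is therefore to check the two standing conditions of \eqref{thm:meta-complexity-0}. With the $(C_1^\tau,C_2^\tau)$ above, $\tfrac{1}{2}\sqrt{C_2^\tau/C_1^\tau} = \tfrac{\sqrt{\epsilon}}{2L_1^\tau} \leq \tfrac{\sqrt{\epsilon}}{2}$ (using $L_1^\tau \geq 1$ from Assumption \ref{assumption:LowerBound}), so the assumed bound $D \geq \tfrac{\sqrt{\epsilon}}{2} + 2d$ implies the per-epoch radius condition $D \geq \tfrac{1}{2}\sqrt{C_2^\tau/C_1^\tau} + 2d$ used in Lemma \ref{lemma:meta-epoch}. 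The divergence condition is immediate, since $\sqrt{C_1^\tau C_2^\tau}\, D/2 = \sqrt{\epsilon}\,D/4$ is a positive constant independent of $\tau$.

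The second step is to extract the epoch count. Invoking \eqref{thm:meta-complexity-1}, the smallest $\tau$ satisfying $\sum_{k=1}^\tau \sqrt{\epsilon}\, D/4 \geq f(x_0^1) - f^*$ is at most $\lceil 4(f(x_0^1)-f^*)/(\sqrt{\epsilon}\,D)\rceil$, yielding the stated $T \leq \lceil 4\Delta_f/(\sqrt{\epsilon}\,D) + 1\rceil$. The third step replicates the triangle-inequality argument from the proof of Theorem \ref{theorem:meta-complexity-GP}: each epoch has diameter at most $D$ and $x_0^{\tau+1} = x_{K_\tau}^\tau$, so every iterate lies within distance $TD = R_T$ of $x_0^1$, and hence inside the ball $B(x_0^1, R_T)$. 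Consequently $L_1^\tau \leq \cL_1(B(x_0^1,R_T))$ for all $\tau \leq T$, which controls the $\max_\tau N_\cA^\tau/C_2^\tau = \max_\tau 2L_1^\tau/\epsilon$ term appearing in \eqref{thm:meta-complexity-2}; noting that $N_\cA^\tau = 1$ for $\cA_{NG}$, substitution gives the advertised $\cO\bigl(\Delta_f\, \cL_1(B(x_0^1,R_T))/\epsilon\bigr)$ complexity.

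No real obstacle is expected: the whole argument is a clean reuse of the generic meta-theorem. The only place that deserves a little care is verifying that the conditions $L_1^\tau \geq 1$ and $d \geq \sqrt{\epsilon}$ interact correctly with the radius constraint $D \geq \tfrac{\sqrt{\epsilon}}{2} + 2d$, since Lemma \ref{lemma:condition-TG} uses $d \geq \sqrt{\epsilon}$ (via $d \geq \sqrt{\epsilon}/L_1^\tau$) to produce the Case-2 descent estimate. Once these bookkeeping items are confirmed, the statement follows verbatim.
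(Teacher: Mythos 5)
Your proposal is correct and follows essentially the same route as the paper: the paper proves Theorem~\ref{theorem:meta-complexity-GP} in detail and then observes that, since Lemma~\ref{lemma:condition-TG} yields the identical constants $C_1^\tau = L_1^\tau/2$ and $C_2^\tau = \epsilon/(2L_1^\tau)$, the normalized-gradient case follows by the same specialization of Theorem~\ref{theorem:meta-complexity} with the adjusted choices of $d$ and $D$. Your verification of the radius condition via $L_1^\tau \geq 1$, the epoch count, the triangle-inequality bound on $R_T$, and the $N_\cA^\tau = 1$ substitution all match the paper's argument.
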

Like our comment of Theorem \ref{theorem:meta-complexity-GP}, we suggest a constant scale $D$ for better numerical efficiency.
\begin{remark}
	Let $B(x_0^1,R_T^*)$ be the smallest ball containing all the iterates, then the complexities of the above methods are actually $\cO(\epsilon^{-1}\cL_1(B(x_0^1,R_T^*)))$. In this paper we simply adopt the  pessimistic upper bound of $R_T^*$ as $R_T \!=\! \sum_{\tau=1}^T\|x^\tau_{K_\tau}\!-\!x^\tau_{0}\| \!\leq\! \cO\big(\frac{\Delta_f}{\sqrt{\epsilon}}\big)$. In practice, we expect $R_T^*\ll R_T$.  
\end{remark}

In fact, one can implement the TGD subroutine step-by-step without meta outer loops: $x_{k+1} = \cA_{TG}\big(f,x_k,\cL_{1}(B(x_k,\delta)),\delta\big)$, and the same complexity can be derived by adopting some key techniques from the generic SLO analysis. It seems that the No-Grad-Lip issue can be easily tackled by a carefully truncated stepsize rule that controls the moving distance while guaranteeing the sufficient descent, which poses a question on the necessity of such a framework. However, as discussed at the beginning of the SLO framework enjoys much better flexibility because it largely preserves the original iteration structure of the subroutines, which enables us to design and analyze subroutines with complicated mechanism, including the subroutine with Convex Until Proven Guilty (CUPG) acceleration, which will be discussed in later sections.

\section{A parameter-free line search subroutine}
\label{sec:lsearch}

Previous subroutines all rely on good estimations of $\cL_1^\tau$. In this section, we introduce a completely {\it parameter-free} subroutine that equips the classical Armijo linesearch with \emph{normalized} search region. Suppose we have a linesearch direction $v$ that is   \emph{gradient related} for some $\alpha\in(0,1]$, that is,
\begin{equation}
	\label{eqn:lsearch-grc}
	\langle\nabla f(x),v\rangle \leq -\alpha \|\nabla f(x)\|\cdot\|v\|.
\end{equation}
The the basic idea is to find some $\delta>0$ s.t. the Armijo  condition (see e.g. \cite{armijo1966minimization,bertsekas99,nocedal2006numerical}) hold:
\begin{equation}
	\label{eqn:lsearch-rule}
	f(x + \delta\cdot v) \leq f(x) + \sigma\delta\langle\nabla f(x),v\rangle,  \quad \sigma\in(0,1)
\end{equation}
If $\nabla f(\cdot)$ is globally Lipschitz continuous, the $\cO(\epsilon^{-1})$ complexity of Armijo linesearch is well-known. However, to our best knowledge, only asymptotic convergence without explicit rate can be derived for classical Armijo linesearch with a fixed search interval $\delta\in[0,\bar \delta]$, for nonconvex and No-Lip-Grad problems, see e.g. \cite{bertsekas99,nocedal2006numerical}. In this paper, we propose a simple but novel normalized search interval $\big[0, \bar{\delta}/\|v_t\|\big]$, where $v_t$ is the search direction at  iteration $t$, see details in Algorithm \ref{alg:lsearch}. Adopting the techniques of the SLO analysis, we provide the first finite-time non-asymptotic convergence result for the Armijo linesearch in nonconvex and No-Grad-Lip problems. 

\begin{algorithm2e}
	\DontPrintSemicolon  
	\caption{A parameter-free Normalized Armijo subroutine: $\cA_{LS}(f,x,\bar{\delta})$}
	\label{alg:lsearch}
	\textbf{default:} Constants $\theta\in(0,1)$, $\sigma\in(0,1)$, $\bar\delta>0$.\\
	\textbf{input:}  An objective function $f$ and a point $x$. \\
	Find some update direction $v\neq 0 $ such that \eqref{eqn:lsearch-grc} hold and initialize $\delta=\frac{\bar{\delta}}{\|v\|}$.\\
	\While{$\mathbf{true}$}{
		\textbf{if}\,\, $f(x + \delta\cdot v)> f(x) + \sigma\delta\langle\nabla f(x),v\rangle$\,\, \textbf{then} \,\, set $\delta = \delta\cdot\theta$\\
		\textbf{else}\,\, \textbf{return}$(x + \delta\cdot v)$\,\, \textbf{end} 
	} 
\end{algorithm2e}

It can be observed that the intuition behind the $\cA_{LS}$ subroutine is similar to that of the $\cA_{TG}$ subroutine, i.e.,  the distance between two successive iterates is at most $\bar{\delta}$ given the normalized search upper bound. Therefore, some {\it local} Lipschitz constants can be utilized in the complexity analysis. Again, for notational simplicity, we rewrite the line search subroutine as below:
\begin{equation}
	\cA(f,x^\tau_k;L_1^\tau ,L_2^\tau;x^\tau_0,D ,d) = \cA_{LS}(f,x^\tau_k,\bar \delta) \quad\mbox{with}\quad L_1^\tau=L_2^\tau=\textbf{null}, D = d = \bar \delta.
\end{equation}
With $D = d = \bar \delta$, Algorithm \ref{alg:Meta} implies that each epoch ends immediately after one single iteration.  We can remove the epoch superscripts and simply write Algorithm \ref{alg:Meta} as $x_{t+1} = \cA_{LS}(f,x_t,\bar \delta).$ Due to the simplicity of this scheme, we provide an independent analysis that shares the same spirit of Condition \ref{condition:sufficient}, but with a much simpler form. 
\begin{lemma}
	\label{lemma:lsearch} Consider Algorithm \ref{alg:lsearch}. Suppose Assumption \ref{assumption:LowerBound} hold, then the Armijo condition \eqref{eqn:lsearch-rule} can be guaranteed  as long as we set
	$\delta\leq\min\bigg\{\frac{(1\!-\!\sigma)\alpha\|\nabla f(x)\|}{\cL_1\big(B(x, \bar \delta)\big)\|v\|}, \; \frac{\bar \delta}{\|v\|}\bigg\}$.
\end{lemma} 
\proof{Proof.} 
First, because $\delta\leq \frac{\bar \delta}{\|v\|}$, all points along the search path belong to the 
following set:
$$\big\{x':x' = x + \delta v, \; 0\leq\delta\leq \bar \delta/\|v\|\big\}\subset B(x,\bar \delta).$$ 
Thus $\nabla f(\cdot)$ is $\cL_1 (B(x,\bar \delta) )$-Lipschitz continuous in $B(x,\bar \delta)$. Denote $L_1 = \cL_1 (B(x,\bar \delta) )$, we have 
\begin{eqnarray*}
	f(x + \delta\cdot v) & \leq & f(x)  + \delta\langle\nabla f(x),v\rangle + \frac{L_1\delta^2}{2}\|v\|^2\\
	&  \overset{(ii)}{\leq} & f(x) +\sigma\delta\langle\nabla f(x),v\rangle - (1-\sigma)\alpha\delta\|\nabla f(x)\|\|v\|  + \frac{L_1\delta^2}{2}\|v\|^2\\
	& = & f(x) +\sigma \delta\langle\nabla f(x),v\rangle-\frac{\alpha\delta\|\nabla f(x)\|\|v\|}{2}\left(2(1-\sigma)  - \frac{L_1\|v\|\cdot\delta}{\alpha\|\nabla f(x)\|}\right),
\end{eqnarray*}
where (ii) is due to \eqref{eqn:lsearch-grc} and the fact that $\sigma\in (0,1)$.
If we set $\delta\leq\min\{\frac{(1-\sigma)\alpha\|\nabla f(x)\|}{L_1\cdot\|v\|},\frac{\bar \delta}{\|v\|}\}$, then $2(1-\sigma)  - \frac{L_1\|v\|\cdot\delta}{\alpha\|\nabla f(x)\|}\geq0$ and \eqref{eqn:lsearch-rule} is satisfied. $\qquad\qquad\qquad\qquad\qquad\qquad\qquad\qquad\qquad\qquad\,\quad\,\,\,\,$ 
\endproof
\begin{corollary}
	\label{corollary:lsearch}
	Suppose Assumption \ref{assumption:LowerBound} and \eqref{eqn:lsearch-grc} hold. Consider Algorithm \ref{alg:Meta} with the $\cA_{LS}$ subroutine, with $\bar{\delta}$ satisfying $\bar{\delta}>\sqrt{\epsilon}$. Suppose  $\|\nabla f(x_t)\| > \sqrt{\epsilon}$ for iteration $t$, then 
	\begin{equation}
		\label{cor:lsearch-1}
		f(x_{t+1}) \!-\! f(x_t) \!\leq\! - \sigma\alpha\sqrt{\epsilon}\|x_{t+1} \!-\! x_t\|\!\qquad\mbox{and}\qquad 
		f(x_{t+1}) \!-\! f(x_t) \leq -\frac{\theta\sigma(1\!-\!\sigma)\alpha^2\epsilon}{2\cL_1\big(B(x_t, \bar{\delta})\big)}.\,\,\,\,
	\end{equation}
\end{corollary}
\proof{Proof.}
First, let $v_t$ and $\delta_t$ be the search direction and the stepsize respectively. Then we obtain
$$f(x_{t+1}) -f(x_t)  =  f(x_t + \delta_t v_t) -f(x_t)\leq   \delta_t \sigma\langle\nabla f(x_t),  v_t\rangle$$
according to the line search termination criterion. By using the fact that $\|x_{t+1}-x_t\| = \delta_t\|v_t\|$, $\|\nabla f(x_t)\|>\sqrt{\epsilon}$ and $v_t$ is chosen to be gradient related satisfying \eqref{eqn:lsearch-grc}, we have 
\begin{equation}
	\label{eqn:cor-lsearch-1}
	f(x_{t+1}) -f(x_t)  \leq -\sigma\alpha\delta_t \|\nabla f(x_t)\|\|v_t\| \leq -\sigma\alpha\sqrt{\epsilon} \|x_{t+1}-x_t\|,
\end{equation}
which proves the first half of \eqref{cor:lsearch-1}. To show the second half, note that the back tracking line search strategy and Lemma \ref{lemma:lsearch} together indicate 
$\delta_t \geq \min\left\{\theta \cdot \frac{(1-\sigma)\alpha\|\nabla f(x_t)\|}{\cL_1(B(x_t, \bar \delta))\|v_t\|}, \theta\cdot\frac{\bar{\delta}}{\|v_t\|}\right\}.$
Then it follows that:
\begin{equation}
	\label{eqn:cor-lsearch-2}
	\|x_{t+1}-x_t\| = \delta_t\|v_t\| \geq \theta\cdot\min\left\{\frac{(1-\sigma)\alpha\sqrt{\epsilon}}{\cL_1(B(x_t,\bar \delta))},\bar\delta\right\} \overset{(i)}{=}  \frac{\theta(1-\sigma)\alpha\sqrt{\epsilon}}{\cL_1(B(x_t,\bar \delta))},
\end{equation}
where (i) is because we assume $\cL_1(\cdot)\geq1$, $0<\theta,\alpha,\sigma\leq1$ and the fact that we have chosen $\bar\delta>\sqrt{\epsilon}$. 
Combining \eqref{eqn:cor-lsearch-1} and \eqref{eqn:cor-lsearch-2} proves the second half of  \eqref{cor:lsearch-1}.$\quad\,\,\qquad\qquad\qquad\qquad\qquad\,\qquad\qquad\,\,\,\,$ 
\endproof 

\noindent Consequently, we have the following complexity result. 
\begin{theorem}
	Suppose Assumption \ref{assumption:LowerBound} holds. Let $\{x_t\}_{t\geq0}$ be generated by the line search scheme $x_{t+1} = \cA_{LS}(f,x_t,\bar \delta)$ with $\bar \delta >\sqrt{\epsilon}$. Let $x_T$ be the first iterate s.t. $\|\nabla f(x_T)\|\leq \sqrt{\epsilon}$. Suppose all search directions  satisfy \eqref{eqn:lsearch-grc} with $\alpha\in(0,1]$. Then  
	$$R_T:=\max\{\|x_t-x_0\|: 0\leq t\leq T\} \leq \frac{\Delta_f}{\alpha\sigma\sqrt{\epsilon}} \qquad\mbox{and}\qquad T\leq\cO\left(\frac{\Delta_f}{\epsilon}\cdot\cL_1\Big(B\big(x_0,R_T+\bar\delta\big)\!\Big)\!\!\right).$$
	Specifically, if we set the search directions to be $-\nabla f(x_t)$, then the result holds with $\alpha = 1$.
\end{theorem}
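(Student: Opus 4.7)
The plan is to derive both claims by summing (or telescoping) the two descent inequalities provided by Corollary~\ref{corollary:lsearch}, and then to control the local Lipschitz constants appearing in those inequalities by a single uniform bound that depends on the radius $R_T$.

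First I would establish the radius bound $R_T \leq \Delta_f/(\alpha\sigma\sqrt{\epsilon})$. Since $T$ is the first index with $\|\nabla f(x_T)\| \leq \sqrt{\epsilon}$, every preceding iterate satisfies $\|\nabla f(x_t)\| > \sqrt{\epsilon}$, so inequality \eqref{cor:lsearch-1} of Corollary~\ref{corollary:lsearch} applies at every step $0 \leq t \leq T-1$. Summing \eqref{cor:lsearch-1} from $0$ to $s-1$ for any $s \leq T$ gives
\begin{equation*}
\sigma\alpha\sqrt{\epsilon}\cdot \sum_{t=0}^{s-1}\|x_{t+1}-x_t\| \;\leq\; f(x_0) - f(x_s) \;\leq\; \Delta_f,
\end{equation*}
and then the triangle inequality $\|x_s - x_0\| \leq \sum_{t=0}^{s-1}\|x_{t+1}-x_t\|$ yields $\|x_s-x_0\| \leq \Delta_f/(\alpha\sigma\sqrt{\epsilon})$. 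Taking the maximum over $0 \leq s \leq T$ produces the stated bound on $R_T$.

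Second, I would use \eqref{cor:lsearch-2} to bound $T$. Because $x_t \in B(x_0,R_T)$ for all $0\leq t\leq T$, we have $B(x_t,\bar\delta)\subseteq B(x_0,R_T+\bar\delta)$; by monotonicity of $\cL_1(\cdot)$ with respect to set inclusion (noted in the remark after Assumption~\ref{assumption:LowerBound}),
\begin{equation*}
\cL_1\bigl(B(x_t,\bar\delta)\bigr) \;\leq\; \cL_1\bigl(B(x_0,R_T+\bar\delta)\bigr) \;=:\; \widehat{L}.
\end{equation*}
Substituting into \eqref{cor:lsearch-2} gives $f(x_{t+1})-f(x_t) \leq -\theta\sigma(1-\sigma)\alpha^2 \epsilon/\widehat{L}$ for each $0\leq t\leq T-1$, and summing over these $T$ steps together with $f(x_T) \geq f^*$ produces
\begin{equation*}
T\cdot \frac{\theta\sigma(1-\sigma)\alpha^2\epsilon}{\widehat{L}} \;\leq\; f(x_0) - f(x_T) \;\leq\; \Delta_f,
\end{equation*}
which rearranges to the claimed bound $T \leq \cO\bigl(\tfrac{\Delta_f}{\epsilon}\cdot \widehat{L}\bigr)$. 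The final sentence about $\alpha=1$ for $d_t = -\nabla f(x_t)$ is immediate from the definition of the gradient related condition \eqref{eqn:lsearch-grc}.

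The argument is essentially bookkeeping on top of Corollary~\ref{corollary:lsearch}; the only subtle point is that we must use inequality \eqref{cor:lsearch-1} (the one linear in step length) for the radius bound, rather than the stronger-looking \eqref{cor:lsearch-2}, because only \eqref{cor:lsearch-1} yields a telescoping bound on $\sum_t\|x_{t+1}-x_t\|$ that does not itself depend on the unknown local Lipschitz constants. Once $R_T$ is bounded, the uniform replacement of the local constants by $\cL_1(B(x_0,R_T+\bar\delta))$ is what makes the complexity estimate self-contained.
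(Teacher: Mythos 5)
Your proposal is correct and follows essentially the same route as the paper's proof: sum the linear-in-step-length descent inequality \eqref{cor:lsearch-1} with the triangle inequality to bound $R_T$, then use set monotonicity of $\cL_1(\cdot)$ to replace each local constant by $\cL_1\big(B(x_0,R_T+\bar\delta)\big)$ and telescope \eqref{cor:lsearch-2} to bound $T$. No gaps; the concluding observation about why \eqref{cor:lsearch-1} rather than \eqref{cor:lsearch-2} must be used for the radius bound is exactly the logic implicit in the paper.
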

\proof{Proof.}
Because $x_T$ is the output, we know $\|\nabla f(x_t)\|>\sqrt{\epsilon}$ for any $0\leq t\leq T-1$. Therefore, for any $1\leq t\leq T$, Corollary \ref{corollary:lsearch} indicates 
$$\Delta_f\geq f(x_0) - f(x_t)\geq\sum_{k=1}^t\sigma\alpha\sqrt{\epsilon}\|x_k - x_{k-1}\|\geq\sigma\alpha\sqrt{\epsilon}\|x_t-x_0\|.$$
Therefore, we have $\|x_t-x_0\|\leq \frac{\Delta_f}{\sigma\alpha\sqrt{\epsilon}} = R_T, \;  \forall~t=1,\cdots, {T}$. This implies that for any $t\leq T$, $B(x_t,\bar \delta)\subset B(x_0,R_T+\bar\delta)$. That is, the line search region will be $B(x_0,R_T+\bar\delta)$, we can use $\cL_1(B(x_0,R_T+\bar\delta))$ to upper bound all the $\cL_1(B(x_t,\bar\delta))$. Therefore, Corollary \ref{corollary:lsearch} also indicates that the per-iteration descent is also lower bounded by 
$f(x_t) - f(x_{t+1}) \geq \frac{\theta\sigma(1-\sigma)\alpha^2{\epsilon}}{\cL_1(B(x_0,R_T+\bar\delta))}, \; \forall~t\le T.$
Therefore, the total iteration complexity can be upper bounded by
$$T\leq \cO\bigg(\Delta_f\Big/\frac{\theta\sigma(1-\sigma)\alpha^2{\epsilon}}{\cL_1(B(x_0,R_T+\bar\delta))}\bigg) = \cO\left(\frac{\Delta_f}{\epsilon}\cdot\cL_1\Big(B\big(x_0,R_T+\bar\delta\big)\Big)\right).$$
This proves the result. $\qquad\qquad\qquad\qquad\qquad$ $\qquad\qquad\qquad\qquad\qquad$ $\qquad\qquad\qquad\qquad\qquad$  
\endproof

\section{Accelerated gradient projection algorithm}
\label{appdx:AGD}
Up to now, we have discussed three relatively simple subroutines for SLO that achieve the same complexity of $\cO\Big(\frac{\Delta_f}{\epsilon}\!\cdot\!\cL_1\big(B(x_0,R_T)\big)\!\Big)$ where $R_T = \cO\Big(\frac{\Delta_f}{\sqrt{\epsilon}}\Big)$.  In this section, we develop a new subroutine which adapts the  acceleration technique {\it convex until proven guilty} (CUPG)  developed in \cite{carmon2017convex} recently. CUPG is a first-order algorithm which  achieves  the $\mathcal{O}\Big(\frac{\Delta_f}{\epsilon^{7/8}}\Big)$ complexity, by assuming $f$ to have global Lipschitz gradient and Hessian. Our goal is two-fold: First, we demonstrate that the SLO framework is flexible enough to include complicated algorithms as subroutines; Second, we  show that by further assuming the {\it local} Hessian Lipschitz continuity, it is possible to improve the complexity of SLO to  $\tilde\cO\Big(\frac{\Delta_f}{\epsilon^{7/8}}\cdot\cL_1^\frac{1}{2}\big(B\big(x_0^1,R_T\big)\big)\cdot\cL_2^\frac{1}{4}\big(B(x_0^1,R_T)\big)\!\Big)$. The development and analysis of the new subroutine is not trivial because CUPG is a sophisticated algorithm derived for unconstrained problems, where the iterates cannot be properly bounded. The key to this adaptation is to properly bound the iterates in each epoch so that the local Lipschitz constants can be used.  First of all, we describe the accelerated gradient projection (AGP) subroutine as Algorithm \ref{alg:AGP-subroutine}.

\begin{algorithm2e}
	\DontPrintSemicolon
	\caption{The AGP subroutine: $\cA_{AGP}\left(f, \bar x;L_1, L_2; \bar x_0, D, \epsilon\right)$}
	\label{alg:AGP-subroutine}
	Set $\alpha \!=\! 2\sqrt{L_2}\epsilon^{\frac{1}{4}}$, $X \!=\! \big\{x:\|x \!-\! \bar x_0\|\!\leq\! D\!-\!2\epsilon^{\frac{1}{4}}\big\}$  \,\,\,\,\,\,\,\,\,\,\,\,\,\,\,\,\,\,\,\,/**Algorithm \ref{alg:Meta} requires $\bar x\!\in\!\mathrm{int}(X)$**/\\ 
	Set $\hat{f}(x) := f(x) + \alpha\|x-\bar x\|^2$\\
	$(\text{Flag},p,u,v,\{\hat y_j\}_{j=1}^t) = \texttt{AGP-UPG}\big(\hat f,X, \hat  y_0\!:=\bar x,\hat \epsilon\!:=\frac{\epsilon}{100}, \hat L_1\!:=L_1 + 2\alpha, \alpha\big)$\\
	\textbf{if}\,\, Flag ==1,3,5\,\, \textbf{return}$(p)$\,\, \textbf{end}\\
	\If{$\,\,\mathrm{Flag} == 2,4$}{
		Find $b^{(1)} \!=\! \argmin_b\!\big\{ \!f(b)\!:\! b\!\in\!\{\hat y_1,...,\hat y_{t-1},u\}\!\big\}$ and $b^{(2)} \!=\! \argmin_b\big\{ \!f(b)\!:\! b\!\in\!\big\{ \!u \!\pm\! \frac{\alpha(u-v)}{L_2\|u-v\|}\big\}\!\big\}$ \\
		\textbf{if}\,\, $f(b^{(1)})\leq f(\bar x) - \frac{\alpha^3}{64L_2^2}$\,\, \textbf{then}\,\, \textbf{return}$\left(b^{(1)}\right)$\\
		\textbf{else}\,\, \textbf{return}$\left(b^{(2)}\right)$\,\, \textbf{end} 
	}
\end{algorithm2e} 

The main idea of Algorithm \ref{alg:AGP-subroutine} is that, given the previous iterate $\bar{x}$, a surrogate function $\hat{f}(x)$ is constructed and then optimized by \texttt{AGP-UPG} (\underline{A}ccelerated \underline{G}radient \underline{P}rojection \underline{U}ntil \underline{P}roven \underline{G}uilty), which will perform a series of accelerated gradient projection (AGP) steps, followed by a procedure \texttt{Find-NC-Pair} to ensure sufficient descent. Details of $\texttt{AGP-UPG}$ is given in Algorithm \ref{alg:AGP-UPG}, with sub-functions $\texttt{Certify-Progress}$ and $\texttt{Find-NC-Pair}$ are given in Algorithm \ref{alg:Certify} and \ref{alg:NC-Pair} respectively. 

\begin{algorithm2e} 
	\DontPrintSemicolon 
	\caption{$\texttt{AGP-UPG}\big(\hat f, X , \hat y_0\in\mathrm{int}(X), \hat\epsilon, \hat L_1, \alpha\big)$}
	\label{alg:AGP-UPG} 
	\textbf{Initialize:} $\hat x_0 = \hat y_0$, $\kappa = \hat L_1/\alpha$, $\omega = \frac{\sqrt{\kappa}-1}{\sqrt{\kappa} + 1}$, $t=1$. \\
	\While{$\mathbf{true}$ }{ 
		Set $\tilde y_{t} = \hat x_{t-1} - \hat L_1^{\!-1}\nabla \hat f(\hat x_{t-1})$ \\
		\textbf{if}\,\, $\tilde y_{t}\in\mathrm{int}(X)$\,\, \textbf{then}\,\, set $\hat y_{t} = \tilde y_{t}$\\
		{\nonl\small$\,\,|$}\vspace{-0.02cm}\\
		\textbf{else}\,\, \\
		{\nonl$\,\,\Bigg|$} \vspace{-0.04cm}\\
		{\nonl$\,\,\Bigg|$} \vspace{-0.65cm}\\
		{\nonl$\,\,\Bigg|$} \vspace{-2.8cm}\\ 
		$\quad$ Set $\hat y_{t} = \mathbf{Proj}_{X}\{\tilde y_{t}\}$.\,\,\,\,/**$\hat y_t\in\partial X$, \texttt{AGP-UPG} returns an output and ends.**/\\
		$\quad$ \textbf{if}\,\, $\hat f(\hat y_{t})> \hat f(\hat y_0)$\,\, \textbf{then}\,\, \\
		{\nonl$\quad\,\,\,\,\Bigg|$}\vspace{-1.15cm}\\ 
		$\qquad\,\,\,$ Set $w = \hat y_0$ and 
		$(u,v) = \texttt{Find-NC-Pair}\big(\hat f,\{\hat x_k\}_{k=0}^t, \{\hat y_k\}_{k=0}^t, w\big)$\vspace{0.1cm}\\
		$\qquad\,\,\,$ \textbf{return}$\left(\text{Flag}=2, \textbf{null},u,v, \{\hat y_k\}_{k=0}^t\right)$\vspace{0.1cm}\\
		$\quad$ \textbf{else}\,\, \textbf{return}$(\text{Flag}=1, \hat y_t,\textbf{null},\textbf{null},\textbf{null})$\,\, \textbf{end}\vspace{-0.0cm}\\
		\textbf{end}\vspace{-0.0cm}\\		
		Set $\hat x_t = \hat y_t + \omega (\hat y_t-\hat y_{t-1})$\,\, and\,\,
		$w = \texttt{Certify-Progress}\big(\hat f,X,\hat y_0,\hat y_t,\hat L_1,\alpha,\kappa\big)$
		\vspace{-0.0cm}\\
		\textbf{if}\,\, $w\neq\textbf{null}$ and $w\in\partial X$\,\, \textbf{then}\,\, \textbf{return}$\left(\text{Flag} = 3, w,\textbf{null},\textbf{null},\textbf{null}\right)$\,\, \textbf{end}\vspace{-0.0cm}\\
		\If{\,\,$w\neq \emph{\textbf{null}}$ \emph{and} $w\in\rm{int}(X)$\vspace{-0.0cm}}{    		
			$(u,v) = \texttt{Find-NC-Pair}\big(\hat f,\{\hat x_k\}_{k=0}^t, \{\hat y_k\}_{k=0}^t, w\big)$\vspace{-0.0cm}\\
			\textbf{return}$\left(\text{Flag}=4,\textbf{null}, u,v, \{\hat y_k\}_{k=0}^t\right)$\vspace{-0.0cm}
		}\vspace{-0.01cm}
		\textbf{if}\,\, $\|\nabla \hat f(\hat y_t)\|\leq\sqrt{\hat \epsilon}$\,\, \textbf{then}\,\, \textbf{return}$(\text{Flag}=5, \hat y_t,\textbf{null},\textbf{null},\textbf{null})$\,\, \textbf{end} \\
		Set $t = t+1$ \vspace{-0.1cm}
	} 
\end{algorithm2e}  

\begin{algorithm2e}
	\DontPrintSemicolon  
	\caption{$\texttt{Find-NC-Pair}\big(\hat f,\{\hat x_k\}_{k=0}^t, \{\hat y_k\}_{k=0}^t, w\big)$}
	\label{alg:NC-Pair} 
	\For{$j = 0,1,...,t-1$}{
		\textbf{for}\,\, $u =\hat y_j,w$\,\, \textbf{do} \\
		$\,\,\,\,\Big|\,\,\,\,\,\,$
		\textbf{if}\,\, $\hat f(u)< \hat f(\hat x_j) + \langle\nabla \hat f(\hat x_j),u-\hat x_j\rangle + \frac{\alpha}{2}\|u-\hat x_j\|^2$\,\, \textbf{then}\,\, \textbf{Return}$(u,\hat x_j)$\,\, \textbf{end}\vspace{-0.22cm}\\
		\textbf{end}
		\vspace{-0.4cm}
	}
\end{algorithm2e}
\begin{algorithm2e}
	\DontPrintSemicolon 
	\caption{$\texttt{Certify-Progress}\big(\hat f,X,\hat y_0,\hat y_t,\hat L_1,\alpha,\kappa\big)$}
	\label{alg:Certify}
	\textbf{if}\,\, $\hat f(\hat y_t)>\hat f(\hat y_0)$\,\, \textbf{then}\,\,  \textbf{return}($\hat y_0$)\,\, \textbf{end}\\
	Set $z = \hat y_t - \hat L_1^{\!-1}\nabla \hat f(\hat y_t)$ \\ 
	\textbf{if}\,\, $z\notin\rm{int}(X)$\,\, \textbf{then}\,\,  \textbf{return}$(\mathbf{Proj}_X\{z\})$\,\, \textbf{end}\\
	
	Set $\psi(z) = \hat f(\hat y_0) - \hat f(z) + \frac{\alpha}{2}\|z-\hat y_0\|^2$\\
	\textbf{if}\,\, $\|\nabla \hat f(\hat y_t)\|^2>2\hat L_1\psi(z)\cdot\exp\{-\frac{t}{\sqrt{\kappa}}\}$\,\, \textbf{then}\,\, \textbf{return}($z$)\\
	\textbf{else}\,\, \textbf{return}(\textbf{null})\,\, \textbf{end}
\end{algorithm2e} 

Compared to the fully unconstrained counterparts in CUPG \cite{carmon2017convex}, $\texttt{AGP-UPG}$, $\texttt{Certify-Progress}$ and $\texttt{Find-NC-Pair}$ should properly design a constraint set $X$ to utilize local Lipschitz information. However, the presence of such $X$ significantly complicates the algorithm. For example in Algorithm \ref{alg:AGP-UPG} line 13 and Algorithm \ref{alg:Certify}, line 3, we need to explicitly handle situations when the iterates do not lie in the interior of $X$. The projection step in Algorithm \ref{alg:AGP-UPG} also makes the analysis more involved as well. Finally,  to avoid confusion between the algorithmic parameters in  $\texttt{AGP-UPG}(\cdot)$ and $\cA_{AGP}(\cdot)$, we put a ``$\,\,\hat{\cdot}\,\,$'' notation for the ones used for $\texttt{AGP-UPG}(\cdot)$. We use $\partial X$ to denote the boundary of $X$. 

\subsection{Preliminary results about $\texttt{AGP-UPG}(\cdot)$ function}
To better understand $\cA_{AGP}(\cdot)$, let us first study the $\texttt{AGP-UPG}$ in Algorithm \ref{alg:AGP-UPG}, which performs the main acceleration steps. It takes the surrogate objective function $\hat{f}$, the local constraint set $X$, the accuracy $\hat{\epsilon} = \epsilon/100$, the local Lipschitz constant estimate of $\hat{f}$: $\hat L_1 = L_1 + 2\alpha$, and the penalty parameter $\alpha$ as  input. It outputs a flag that contains different status of the algorithms, as well as a few iterates to be used later. First, let us describe all possible outcomes of $\texttt{AGP-UPG}(\cdot)$ encoded in ``Flag", and then explain how to use the other outputs under different outcomes. 
\begin{remark}
	\label{proposition:Flags}
	The function $\texttt{AGP-UPG}(\cdot)$ has the following $5$ possible outcomes:
	\begin{itemize}
		\item Flag = 1: The last iterate is on the boundary of $X$ while other iterates are in the interior, i.e., $\hat y_t\in \partial X$,
		$\{\hat y_k\}_{k=0}^{t-1}\subset\text{\rm int}(X)$. The output is $(\text{Flag}=1, \hat y_t,\emph{\textbf{null}},\emph{\textbf{null}},\emph{\textbf{null}})$.
		In this case 
		$\hat f(\hat y_t)\leq \hat f(\hat y_0).$
		\item Flag = 2: Similar to  Flag = 1, $\hat y_t\in \partial X$ and $\{\hat y_k\}_{k=0}^{t-1}\subset\text{\rm int}(X)$. However, $\hat f(\hat y_t)>\hat f(\hat y_0)$. In this case, it will be proved that $\texttt{Find-NC-Pair}(\cdot)$ can find and output an NC-pair $(u,v)$ s.t.
		\begin{eqnarray}
			\label{defn:nc-pair}
			\hat f(u)<\hat f(v) + \langle\nabla \hat f(v),u-v\rangle + \frac{\alpha}{2}\|u-v\|^2.
		\end{eqnarray} 
		The output  is $\left(\text{Flag}=2, \emph{\textbf{null}},u,v, \{\hat y_k\}_{k=0}^t\right)$. The vector $u\!\in\!\{\hat y_k\}_{k=0}^{t-1}\!\subset\! X$. However, $v\!\in\!\{\hat x_k\}_{k=0}^{t-1}$ may not belong to $X$. Since $X=B(\bar x_0,D-2\epsilon^{\frac{1}{4}})$ in Algorithm \ref{alg:AGP-subroutine}, for $\forall i$, $\|\hat y_i-\bar x_0\|\leq D-2\epsilon^{\frac{1}{4}},$ and $\|\hat x_i-\bar x_0\| = \|\hat y_i + \omega(\hat y_i - \hat y_{i-1})-\bar x_0\| \leq  3D-6\epsilon^{\frac{1}{4}}$. That is, $\|u-\bar x_0\|\leq D, \|v-\bar x_0\|\leq 3D$. Therefore, $L_1, L_2$ should be chosen as  $\cL_{1}\big(B(\bar x_0,3D)\big)$ and $\cL_{2}\big(B(\bar x_0,3D)\big)$ rather than $\cL_1(X)$ and $\cL_2(X)$.
		\item Flag = 3: In this case, all the iterates are in the interior of $X$, i.e., $\{\hat y_k\}_{k=0}^t \subset\text{int}(X)$; $w\neq \emph{\textbf{null}}$, and  $w\in\partial X$. The output  is $(\text{Flag} = 3, w,\emph{\textbf{null}},\emph{\textbf{null}},\emph{\textbf{null}})$, and $\hat f(w)\leq \hat f(\hat y_0).$
		\item Flag = 4: Similar to Flag = 3, $\{\hat y_k\}_{k=0}^t \subset\text{int}(X)$; In addition, an NC-pair $(u,v)$ satisfying \eqref{defn:nc-pair} is guaranteed to be discovered in this case. The output is $(\text{Flag}=4,\emph{\textbf{null}}, u,v, \{\hat y_k\}_{k=0}^t)$, with $u\in X$. In view of the definition of $X$, we have $\|u-\bar x_0\|\leq D$ and $\|v-\bar x_0\|\leq 3D$. 
		\item Flag = 5: Similar to Flag = 3,  $\{\hat y_k\}_{k=0}^t \subset\text{int}(X)$. In addition, the last iterate satisfies that  $\|\nabla \hat f(\hat y_t)\|\leq\sqrt{\epsilon}$. The output  is $(\text{Flag}=5, \hat y_t,\emph{\textbf{null}},\emph{\textbf{null}},\emph{\textbf{null}})$.
	\end{itemize}
\end{remark}

Next, we show that when ${\rm Flag} = 2,4$ we must be able to find an NC-pair that satisfies \eqref{defn:nc-pair}. The proof is largely similar to its unconstrained counterpart in \cite{carmon2017convex}, while Lemma \ref{lemma:NC-Pair} is for \emph{projected} accelerated gradient descent updates. We relegate its proof to the Appendix.

\begin{lemma}
	\label{lemma:NC-Pair}
	Suppose $\hat L_1$ is chosen so that $\nabla \hat f(\cdot)$ is $\hat L_1$-Lipschitz continuous in $X$, and 
	$$\hat f(\hat y_{s+1})  \leq \hat f(\hat x_s) + \langle \nabla \hat f(\hat x_s), \hat y_{s+1}-\hat x_s\rangle + \frac{\hat L_1}{2}\|\hat y_{s+1}-\hat x_s\|^2$$ holds for $s=0, 1, \cdots, t-1,$ even if $\hat x_s\notin X$. 	Then if ${\rm Flag} = 2,4$,  Algorithm \ref{alg:NC-Pair} will find an NC-pair $(u,v)$ such that \eqref{defn:nc-pair} is satisfied. Moreover, 
	\begin{equation}
		\label{cor:NC-Pair-1}
		\max\{\hat f(\hat y_1),...,\hat f(\hat y_{t-1}),\hat f(u)\} \leq \hat f(\hat y_0).
	\end{equation}
\end{lemma}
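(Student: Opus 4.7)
The plan is to treat the two claims separately. The max statement can be verified by a direct inspection of the algorithm's control flow, while the existence of an NC-pair will be proved by contradiction using the classical ``estimating-sequence'' argument for accelerated gradient methods, adapted to the projected setting.

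First I would establish the bound $\max\{\hat f(\hat y_1),\ldots,\hat f(\hat y_{t-1}),\hat f(u)\}\le \hat f(\hat y_0)$. For each $j\in\{1,\ldots,t-1\}$, the iterate $\hat y_j$ was produced in a main-loop pass of \texttt{AGP-UPG} that did \emph{not} terminate. In that pass, \texttt{Certify-Progress} was invoked on $\hat y_j$ and returned \textbf{null}; however the very first branch of Algorithm \ref{alg:Certify} returns $\hat y_0\neq\textbf{null}$ whenever $\hat f(\hat y_j)>\hat f(\hat y_0)$. Hence $\hat f(\hat y_j)\le \hat f(\hat y_0)$. The procedure \texttt{Find-NC-Pair} only considers first coordinates in $\{\hat y_0,\hat y_1,\ldots,\hat y_{t-1},w\}$, so it remains to handle $u=w$. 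When $\text{Flag}=2$, $w=\hat y_0$ is set explicitly. When $\text{Flag}=4$, \texttt{Certify-Progress} returned $w\in\mathrm{int}(X)$, which forces either $w=\hat y_0$ (trivial) or $w=z:=\hat y_t-\hat L_1^{-1}\nabla\hat f(\hat y_t)$. In the latter case the first branch of \texttt{Certify-Progress} must have passed, so $\hat f(\hat y_t)\le \hat f(\hat y_0)$; and since $z\in\mathrm{int}(X)$, a single application of the descent-lemma hypothesis gives $\hat f(z)\le \hat f(\hat y_t)-\tfrac{1}{2\hat L_1}\|\nabla\hat f(\hat y_t)\|^2\le \hat f(\hat y_0)$.

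Next I would show that an NC-pair must exist. Suppose toward contradiction that the search in Algorithm \ref{alg:NC-Pair} fails, i.e., for every $j\in\{0,\ldots,t-1\}$ and every $u\in\{\hat y_j,w\}$,
\[
\hat f(u)\ \ge\ \hat f(\hat x_j)+\langle \nabla\hat f(\hat x_j),u-\hat x_j\rangle+\tfrac{\alpha}{2}\|u-\hat x_j\|^2.
\]
These are precisely the $\alpha$-strongly-convex lower bounds used in Nesterov's analysis of accelerated gradient descent, evaluated at exactly the test points that the momentum update consults. Combined with the $\hat L_1$-descent inequality assumed in the statement, one can run the standard estimating-sequence induction on the potential $\hat f(\hat y_s)-\hat f(w)+\tfrac{\alpha}{2}\|\cdot-w\|^2$ as in \cite{carmon2017convex}, concluding that
\[
\|\nabla \hat f(\hat y_t)\|^2 \ \le\ 2\hat L_1\,\psi(z)\,\exp\!\bigl(-t/\sqrt{\kappa}\bigr)\qquad\text{(for the $\text{Flag}=4$ case)}
\]
and that $\hat f(\hat y_t)\le \hat f(\hat y_0)$ (for the $\text{Flag}=2$ case, obtained by taking $w=\hat y_0$). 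The first inequality contradicts the very criterion in \texttt{Certify-Progress} that caused $w=z$ to be returned; the second contradicts the check $\hat f(\hat y_t)>\hat f(\hat y_0)$ that triggered the $\text{Flag}=2$ branch. Either way we reach a contradiction, so some pair $(u,v)$ satisfying \eqref{defn:nc-pair} is found.

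The hard part is this last induction: while the algebra of the estimating-sequence argument is classical, in our setting some $\hat x_s$ may lie outside $X$ and $\hat y_t$ may be a projection (in the $\text{Flag}=2$ branch) rather than a raw gradient step. I would therefore be careful to invoke the descent inequality only in the form supplied by the hypothesis (which is assumed to hold ``even if $\hat x_s\notin X$''), to use the strong-convexity-type bounds only at the finitely many test pairs $(u,\hat x_j)$ the algorithm actually visits, and to verify that the projection of $\tilde y_t$ onto $X$ is non-expansive so that the telescoping sum retains its sign. With those three ingredients in place, the contraction rate $(1-1/\sqrt{\kappa})^t\le e^{-t/\sqrt\kappa}$ emerges exactly as in the unconstrained case, closing the proof.
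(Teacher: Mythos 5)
Your proposal matches the paper's proof essentially step for step: the max bound is obtained by the same control-flow inspection of \texttt{Certify-Progress} (null return forces $\hat f(\hat y_j)\le\hat f(\hat y_0)$, and $w=z$ is handled by one application of the descent lemma), and the existence of an NC-pair follows by the same contradiction against the estimating-sequence bound $\hat f(\hat y_s)-\hat f(w)\le(1-1/\sqrt{\kappa})^s\psi(w)$, which is exactly the paper's Lemma \ref{lemma:AGP}, its projected analogue of Proposition 1 of \cite{carmon2017convex}. One small caveat on the induction you defer: the ingredient the paper actually needs to handle the projected step is not non-expansiveness of $\mathbf{Proj}_X$ but the projection's variational inequality $\langle\nabla\hat f(\hat x_{s-1})-\cG_{\hat L_1}(\hat x_{s-1}),\,x-\hat y_s\rangle\ge 0$, which yields the gradient-mapping form of the descent lemma (Lemma \ref{lemma:descent-nc}) that drives the estimating-sequence recursion.
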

In the next lemma, we provide the iteration complexity of the $\texttt{AGP-UPG}\big(\cdot)$ function.
\begin{lemma}
	\label{lemma:MaxIter}
	Let $t$ be the maximum number of iterations of $\texttt{AGP-UPG}\big(\hat f,X,\hat y_0,\hat \epsilon,\hat L_1,\alpha\big)$ before it terminates. Denote $z = \hat y_{t-1} - \hat L_1^{\!-1}\nabla \hat f(\hat y_{t-1}).$ Then $t\leq 1 + \max\Big\{0,\sqrt{\frac{\hat L_1}{\alpha}}\log\left(\frac{2\hat L_1\psi(z)}{\hat \epsilon}\right)\Big\}.$
\end{lemma}
\proof{Proof.} 
Because the first $t\!-\!1$ iterations  do not meet the termination rule for Flag = 1,2,3,4, we have 
$\hat \epsilon \leq \|\nabla \hat f(y_{t-1})\|^2 \leq 2\hat L_1\psi(z)\cdot\exp\Big\{\!\!-\!\frac{t\!-\!1}{\sqrt{\hat L_1/\alpha}}\Big\}$ in Certify-Progress.
This implies the result.
\endproof

\subsection{Iteration complexity of Algorithm \ref{alg:Meta} with $\cA_{AGP}$ subroutine}
Suppose we set the parameters of $\cA_{AGP}$ and the SLO framework as:
\begin{equation}
	\label{eqn:para-AGP-UPG}
	d = 2\epsilon^{\frac{1}{4}},\qquad D \geq 6 \epsilon^{\frac{1}{4}},\qquad L_1^\tau = \cL_1(B(x^\tau_0,3D)),\qquad L_2^\tau = \cL_2(B(x^\tau_0,3D)).
\end{equation}
Note that in the Line 4 of Algorithm \ref{alg:Meta}, we set $L_1^\tau = \cL_1(B(x^\tau_0,D))$. However, due to the special structure of the function $\texttt{AGP-UPG}(\cdot)$, as commented in Remark \ref{proposition:Flags}, we need to set the parameters to be $L_1^\tau = \cL_1(B(x^\tau_0,3D))$ and $L_2^\tau = \cL_2(B(x^\tau_0,3D))$. 
In summary, in the Line 8 of Algorithm \ref{alg:Meta}, the  $k+1$th iteration at epoch $\tau$ is generated as $
x^\tau_{k+1} =  \cA_{AGP}\left(f,x^\tau_k; L_1^\tau, L_2^\tau; x^\tau_0, D, \epsilon\right).$

To analyze the  algorithm, first, let us check the limited travel condition  \eqref{eq:truncated}. For the $\cA_{AGP}(\cdot)$ subroutine, we have $X: = B(x^\tau_0,  D - 2\epsilon^{\frac{1}{4}})$. If the output $x^\tau_{k+1}$ is returned by Line 4 of Algorithm \ref{alg:AGP-subroutine} with Flag = 1, 3, 5, then $x^\tau_{k+1}\!\in\! X$ and $\|x^\tau_{k+1}\!-\!x^\tau_0\|\leq D \!-\! 2\epsilon^{\frac{1}{4}}$, see Remark \ref{proposition:Flags}. Otherwise, if the output $x^\tau_{k+1}$ is returned by Line 5-8 with Flag = 2,4 and  $x^\tau_{k+1}\in\left\{\hat y_1,\cdots,\hat y_{t-1},u, u \pm \frac{\alpha(u-v)}{L_2^\tau\|u-v\|}\right\}$. Remark \ref{proposition:Flags} tells us  $\{\hat y_1,\cdots,\hat y_{t-1},u\}\subset X$ and hence $\max\{\|\hat y_1-x^\tau_0\|,\cdots,\|\hat y_{t-1}-x^\tau_0\|,\|u-x^\tau_0\|\}\leq D - 2\epsilon^{\frac{1}{4}}$. 
Because $\alpha = 2\sqrt{L_2^\tau}\epsilon^{\frac{1}{4}}$ and $L_2^\tau\geq 1$, we also know  
$\max\big\{\big\|u \pm \frac{\alpha(u-v)}{L_2^\tau\|u-v\|}-x^\tau_0\big\|\big\} \leq \|u-x^\tau_0\| + \frac{2\epsilon^\frac{1}{4}}{\sqrt{L_2^\tau}}\leq D.$
Therefore, in all situations, $\|x^\tau_{k+1}-x^\tau_0\|\leq D$. The subroutine $\cA_{AGP}(\cdot)$ satisfies condition  \eqref{eq:truncated}.

\begin{lemma}
	\label{lemma:condition-AGP}
	Suppose Assumptions \ref{assumption:LowerBound} and \ref{assumption:growth_fun_2} hold. Let $\{x^\tau_k\}_{k=0}^{K_\tau}$ be generated by Algorithm \ref{alg:Meta} as the $\tau$-th epoch, with the subroutine  $x^\tau_{k+1} =  \cA_{AGP}\left(f,x^\tau_k; L_1^\tau, L_2^\tau; x^\tau_0, D, \epsilon\right)$ with constants given by  \eqref{eqn:para-AGP-UPG}. If $\|\nabla f(x^\tau_k)\|>\sqrt{\epsilon}$ for $0\leq k\leq K_\tau-1,$ then Condition \ref{condition:sufficient} holds with 
	\begin{align}\label{eq:C1C2:AGP}
		C_1\left(L^{\tau}_1,L_2^\tau,\epsilon\right) = \frac{\sqrt{L_2^\tau}}{24} \epsilon^{\frac{1}{4}} \qquad\mbox{and}\qquad C_2(L^{\tau}_1,L_2^\tau,\epsilon) = \frac{\epsilon^\frac{3}{4}}{10\sqrt{L_2^\tau}}.
	\end{align}
\end{lemma}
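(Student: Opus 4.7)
The strategy is a case analysis on the exit Flag $\in\{1,2,3,4,5\}$ of the single call to $\texttt{AGP-UPG}(\cdot)$ that produces $x_k^\tau$ from $x_{k-1}^\tau$; the five cases are catalogued in Remark 1. Throughout the argument, write $\bar x := x_{k-1}^\tau$, $\alpha := 2\sqrt{L_2^\tau}\,\epsilon^{1/4}$, and $\hat f(x):=f(x)+\alpha\|x-\bar x\|^2$, so that $\hat f(\bar x)=f(\bar x)$ and $\nabla\hat f(\bar x)=\nabla f(\bar x)$. The central identity I intend to exploit in every case is that, whenever the returned point $p$ satisfies $\hat f(p)\le \hat f(\bar x)$, we get
\[
f(p) - f(\bar x) \;\le\; -\alpha\,\|p - \bar x\|^2,
\]
which, because $\alpha = 48\cdot C_1(L_1^\tau,L_2^\tau,\epsilon)$, immediately yields \eqref{cond-1} with a generous margin. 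For Flags 1, 3, 5 this domination $\hat f(p)\le \hat f(\bar x)$ is guaranteed because $\texttt{Certify-Progress}(\cdot)$ returns $\hat y_0$ (and thereby forces Flag 2 or 4) whenever $\hat f(\hat y_t)>\hat f(\hat y_0)$; for the $p=b^{(1)}$ branch of Flags 2,4 the inequality is furnished by Lemma 2.

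To cover the remaining $p = b^{(2)}$ sub-case of \eqref{cond-1}, I would split $\|b^{(2)}-\bar x\|^2\le 2\|b^{(2)}-u\|^2+2\|u-\bar x\|^2 = 2(\alpha/L_2^\tau)^2+2\|u-\bar x\|^2$ and combine the bound $f(u)-f(\bar x)\le -\alpha\|u-\bar x\|^2$ (from Lemma 2) with $f(b^{(2)})-f(u)\le -\alpha^3/(c (L_2^\tau)^2)$. The latter inequality is obtained by a standard Taylor expansion of $\hat f$ at $u$ along $\pm(u-v)/\|u-v\|$, using local Hessian-Lipschitz continuity and the defining inequality \eqref{defn:nc-pair} of the NC-pair; absorbing both pieces of descent into $-C_1\|b^{(2)}-\bar x\|^2$ only requires the constant $c$ in the NC-direction estimate to be modest, which is exactly what the standard calculation yields.

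For \eqref{cond-2} I proceed case by case. The Flag 2, 4 cases are essentially immediate: the $\alpha^3/(64 (L_2^\tau)^2)$ threshold test inside Algorithm 3 guarantees $f(b^{(1)})-f(\bar x)\le -\epsilon^{3/4}/(8\sqrt{L_2^\tau})$ in the $b^{(1)}$ branch, while the NC-direction estimate just mentioned delivers $f(b^{(2)})-f(\bar x)\le -\alpha^3/(c(L_2^\tau)^2)$ in the other; both comfortably exceed $C_2 = \epsilon^{3/4}/(10\sqrt{L_2^\tau})$. The Flags 1, 3 cases can only arise as the \emph{last} iterate of the epoch, since their outputs lie on $\partial X$, triggering the epoch-end rule of Algorithm 1; hence $k=K_\tau$ and \eqref{cond-2}, which is only required for $k\le K_\tau-1$, is vacuous in these cases. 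The genuine obstacle is Flag 5: here the output satisfies $\|\nabla\hat f(\hat y_t)\|\le\sqrt{\epsilon}/10$, while $\|\nabla\hat f(\hat y_0)\|=\|\nabla f(\bar x)\|>\sqrt{\epsilon}$ by the hypothesis of the Condition. Because no NC-pair was discovered, the CUPG logic (materialised in the exponential progress test inside $\texttt{Certify-Progress}$) lets us treat $\hat f$ as though it were $\alpha$-strongly convex along the iterate trajectory. Combining a strong-convexity-style lower bound $\|\nabla\hat f(\hat y_t)\|^2\ge 2\alpha(\hat f(\hat y_t)-\hat f^*)$ with an $\hat L_1$-smoothness upper bound on $\hat f(\hat y_0)-\hat f^*$ forces $\|\hat y_t-\bar x\|\gtrsim \sqrt{\epsilon}/\alpha = \epsilon^{1/4}/(2\sqrt{L_2^\tau})$, so that the descent $\alpha\|\hat y_t-\bar x\|^2$ reaches the target $\epsilon^{3/4}/(10\sqrt{L_2^\tau})$.

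The principal source of difficulty is this Flag 5 argument: we must convert a small gradient of the regularised objective $\hat f$, plus the negative information that no NC-pair was produced along the trajectory, into an \emph{absolute} descent on the original $f$. Doing this cleanly will require restating, in the projected/constrained setting used by $\texttt{AGP-UPG}(\cdot)$, the CUPG lemmas of Carmon--Duchi--Hinder--Sidford that quantify accelerated convergence of AGD against a function that is ``convex until proven guilty''; the bookkeeping is delicate because the projection step and the handling of iterates near $\partial X$ interact non-trivially with the exponential progress test used to certify effective strong convexity.
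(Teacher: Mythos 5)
Your case analysis for \eqref{cond-1} and for the Flag $=2,4$ cases of \eqref{cond-2} follows the paper's proof in substance (the paper organizes the $b^{(2)}$ bookkeeping slightly differently, bounding $\|u-x^\tau_{k-1}\|\le \alpha/(8L_2^\tau)$ via the threshold test and Lemma~3 of Carmon et al.\ rather than carrying the $-\alpha\|u-\bar x\|^2$ term, but your decomposition also closes with the stated constants). One small omission there: the NC-direction descent $f(b^{(2)})\le f(u)-\alpha^3/(12(L_2^\tau)^2)$ requires $\|u-v\|\le \alpha/(2L_2^\tau)$, and this is exactly what the failed threshold test $f(b^{(1)})-f(\bar x)>-\alpha^3/(64(L_2^\tau)^2)$ buys you (again via Lemma~3 of Carmon et al.); you should say so explicitly.

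The genuine gap is the Flag $=5$ case of \eqref{cond-2}, which you correctly flag as the crux but then attack by a route that cannot work. The strong-convexity lower bound $\|\nabla\hat f(\hat y_t)\|^2\ge 2\alpha(\hat f(\hat y_t)-\hat f^*)$ combined with smoothness at $\hat y_0$ only controls quantities of size $\epsilon/\hat L_1$, whereas the target descent $C_2=\epsilon^{3/4}/(10\sqrt{L_2^\tau})=\epsilon/(5\alpha)$ is larger by the condition number $\kappa=\hat L_1/\alpha$; concretely, smoothness gives $\hat f(\hat y_0)-\hat f^*\ge\epsilon/(2\hat L_1)$ while strong convexity only gives $\hat f(\hat y_t)-\hat f^*\le\epsilon/(200\alpha)$, and since $\hat L_1\gg\alpha$ the difference can be negative, so no useful bound on $\|\hat y_t-\bar x\|$ or on the descent follows. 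The argument the paper uses is elementary and hinges on the gradient of $f$ at the \emph{output} point rather than the input point: since $k\le K_\tau-1$, the meta-algorithm's termination test did not fire at $x^\tau_k=\hat y_t$, so the lemma's hypothesis gives $\|\nabla f(\hat y_t)\|>\sqrt{\epsilon}$, while the Flag-5 exit gives $\|\nabla\hat f(\hat y_t)\|\le\sqrt{\hat\epsilon}=\sqrt{\epsilon}/10$. Because $\nabla\hat f(\hat y_t)-\nabla f(\hat y_t)=2\alpha(\hat y_t-\bar x)$, the triangle inequality forces $2\alpha\|\hat y_t-\bar x\|\ge\tfrac{9}{10}\sqrt{\epsilon}$, and plugging this into the already-established bound $f(\hat y_t)-f(\bar x)\le-\alpha\|\hat y_t-\bar x\|^2$ yields $f(\hat y_t)-f(\bar x)\le-\alpha\bigl(\tfrac{9\sqrt{\epsilon}}{20\alpha}\bigr)^2\le-\tfrac{\epsilon}{5\alpha}=-\tfrac{\epsilon^{3/4}}{10\sqrt{L_2^\tau}}$. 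No restatement of the CUPG convergence machinery in the projected setting is needed for this step.
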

The proof of this lemma is in the Appendix. Combined with Lemma \ref{lemma:meta-epoch}, we immediately get a per-epoch descent estimate of
$f(x^\tau_{\!K_\tau}) - f(x^\tau_0) \leq -  \frac{\sqrt{\epsilon} D}{32}$, which further implies Theorem \ref{theorem:meta-complexity-AGP}.

\begin{theorem}
	\label{theorem:meta-complexity-AGP}
	Under the assumptions and algorithmic parameters of Lemma \ref{lemma:condition-AGP}, it takes at most $T = \left\lceil\frac{32(f(x^1_0) - f^*)}{\sqrt{\epsilon} D}+1\right\rceil$ epochs to output a point $\bar x$ s.t. $\|\nabla f(\bar x)\|^2\leq\epsilon$. Each execution of $\cA_{AGP}(\cdot)$ has a complexity of $N_\cA^\tau = \cO\left(\frac{\sqrt{L_1^\tau}}{(L_2^\tau)^\frac{1}{4}}\epsilon^{-\frac{1}{8}}\log\left(1/\epsilon\right)\!\right)$, the total complexity of Algorithm \ref{alg:Meta} is at most 
	$$\sum_{\tau=1}^T K_\tau N_\cA^\tau \leq\cO\left(\frac{\Delta_f}{\epsilon^{7/8}}\cdot\cL_1^\frac{1}{2}\big(B(x^1_0,R_T)\big)\cdot\cL_2^\frac{1}{4}\big(B(x^1_0,R_T)\big)\log(1/\epsilon)\right),$$ 
	where we denote $\Delta_f := f(x^1_0) - f^*$, and $R_T : = (T+2) D =  \frac{32\Delta_f}{\sqrt{\epsilon}}+3D.$
\end{theorem}
\proof{Proof.}
The bound on $T$ is straightforward. For the per-iteration complexity of $\cA_{AGP}$, it suffices to see that the following constants are used in Lemma 4.3
$\hat L_1 = L_1^\tau + 4\sqrt{L_2^\tau}\epsilon^\frac{1}{4}$ and $\alpha = 2\sqrt{L_2^\tau}\epsilon^\frac{1}{4}.$
Thus, the total complexity for invoking the $\cA_{AGP}$ once will be 
$$N_\cA^\tau = \cO\left(\sqrt{\frac{\hat L_1^\tau}{\alpha}}\log(1/\epsilon)\right) = \cO\left(\frac{\sqrt{L_1^\tau}}{(L_2^\tau)^\frac{1}{4}}\epsilon^{-\frac{1}{8}}\cdot\log\left(1/\epsilon\right)\right).$$
Therefore, by applying the above choices of $N_{\cA}^{\tau}$ and $C^{\tau}_2$ in \eqref{eq:C1C2:AGP} to Theorem 2.4, we obtain that 
\begin{eqnarray*}
	\sum_{\tau=1}^T K_\tau N_\cA^\tau  \leq  \max_{1\leq \tau\leq T}\left\{\frac{N_\cA^\tau}{C_2^\tau}\right\}\!\Delta_f \!+\! \sum_{\tau=1}^{T}N_\cA^\tau= \cO\left(\max_{1\leq \tau\leq T}\left\{\!\!\sqrt{L_1^\tau}(L_2^\tau)^\frac{1}{4}\right\}\epsilon^{-\frac{7}{8}}\Delta_f\log(1/\epsilon)\right).
\end{eqnarray*}
Note that $\|x_k^\tau-x_k^0\|\leq D$ and $x_{K_\tau}^\tau = x_0^{\tau+1}$ for any $k,\tau$, by triangle inequality we know 
$\|x_0^\tau-x^1_0\|\leq (\tau-1)D\leq (T-1)D$. Consequently, 
$B(x_0^\tau,3D)\subseteq B(x_0^1,(T+2)D) = B(x_0^1,R_T)$ and 
$$L_1^\tau = \cL_1\big(B(x_0^\tau,3D)\big)\leq \cL_1\big(B(x_0^1,R_T)\big), \quad L_2^\tau = \cL_2\big(B(x_0^\tau,3D)\big)\leq \cL_2\big(B(x_0^1,R_T)\big),$$ 
where $R_T$ is given by $R_T : = (T+2)\cdot D \le  \frac{32\Delta_f}{\sqrt{\epsilon}}+3D.$  	
Therefore, we obtain the final complexity.  
\endproof

\section{Comparison of different algorithms.}
In this section, we compare several algorithms to illustrate the strength of our method. And we provide an example to prove the tightness of our radius estimation $R_T\leq \cO\left(\frac{\Delta_f}{\sqrt{\epsilon}}\right)$.
First, in order to compare withBPG, we provide the following lemma to relate  $D_h(x^{k+1},x^k)$ and $\|\nabla f(x^k)\|^2$. 
\begin{lemma}\label{lemma:BPG}
	Suppose $f(\cdot)$ is $L$-smooth adaptable to function
	$h(x) \!=\! \alpha\|x\|^d \!+\! \beta\|x\|^2$ for some $\alpha,\beta\!>\!0$ and $d\geq4$. Define $D_h(y,x) \!=\! h(y)\!-\!\nabla f(x)^{\!\top}\!(y\!-\!x)\!-\!h(x)$ and let $x^{k+1}\!:=\!\argmin_x \!\nabla f(x^k)^{\!\top}\!(x\!-\!x^k) \!+\! LD_h(x^k)$ be the BPG update.  Suppose $\|x^k\|\geq 1$, then if $\|\delta_k\|\ll1$ is sufficiently small, there $\exists C_1,C_2,>0$ that only depends on $\alpha,\beta,L,d$ such that $\frac{\|\nabla f(x^k)\|^2}{\|x^k\|^{d-2}}\big/D_h(x^{k+1},x^k)\in[C_1,C_2].$
\end{lemma}
Note that the assumption $\|\delta_k\|\ll 1 \leq \|x^k\|$ is very natural, because if one expects BPG to converge, then $\delta_k\to0$ as $k\to+\infty$. A detailed version of Lemma \ref{lemma:BPG} is provided in the appendix together with a proof. Next, let us consider the following series of nonconvex No-Lip-Grad instances. First, define the function $r(z)=\frac{1}{2+\ln(1+z^2)}$. For arbitrary predetermined small enough tolerance $\epsilon>0$, and constant $\Delta\gg\sqrt{\epsilon}$, denote $a_{\Delta,\epsilon} = \frac{-r'(\frac{\Delta}{\epsilon^{200}}-1)}{2}$, $b_{\Delta,\epsilon}=r(\frac{\Delta}{\epsilon^{200}}-1) + \frac{r'(\frac{\Delta}{\epsilon^{200}}-1)}{2}$ where 200 is an arbitrary exponent. Define  $f_{\Delta,\epsilon}(x_1,x_2) = \Delta\cdot g_{\Delta,\epsilon}(x_1) + (x_1^{d-2}+1)x_2^2$ with an even integer $d\geq 4$, and 
$$g_{\Delta,\epsilon}(x_1)=\begin{cases} 
	r(1) + r'(1)(x_1-1) & x_1\leq 1,\\
	r(x_1) & 1<x_1<\frac{\Delta}{\epsilon^{200}}-1,\\
	a_{\Delta,\epsilon}\big(x_1-\frac{\Delta}{\epsilon^{200}}\big)^2 +  b_{\Delta,\epsilon} & x_1\geq \frac{\Delta}{\epsilon^{200}}-1.\\
\end{cases}$$  Then it is not hard to verify that the following properties hold for $f(\cdot):=f_{\Delta,\epsilon}(\cdot)$:  
\begin{itemize}
	\item [\textbf{(i).}] \emph{$f(\cdot)$ is continuously differentiable and \textbf{coercive} with \textbf{bounded level sets}.} 
	\item [\textbf{(ii).}] \emph{The optimal solution is $(x^*_1,x_2^*)=(\Delta\epsilon^{-200},0)$. Default $x^0=0$ as initial solution, then the function gap $\Delta_f:=f(0)-f(x^*) = \Theta(\Delta)$. }
	\item [\textbf{(iii).}] \emph{If $\|\nabla f(\bar x)\|^2\leq \epsilon$, then one must have $\|\bar x\|\geq \Omega\left(\frac{\Delta_f}{\sqrt{\epsilon}\ln^2(\Delta_f/\sqrt{\epsilon})}\right)$. For example, $(x^\epsilon_1,x^\epsilon_2) = (\Delta\epsilon^{-\frac{1}{2}},0)$ is such an $\epsilon$-stationary point. This property also shows that our $R_T\leq\mathcal{O}\left(\frac{\Delta_f}{\sqrt{\epsilon}}\right)$ bound is tight up to an $\mathcal{O}(\ln^2(\Delta_f/\sqrt{\epsilon}))$ factor.}
	\item [\textbf{(iv).}] \emph{Let $\mathrm{Lev}(0) := \{x: f(x)\leq f(0)\}$ be a level set, then $\cL_1(\mathrm{Lev}(0))\geq \Delta^{d-2}\epsilon^{-200(d-2)}$. For any ball $B(0,R)$ with $R\geq \Omega(\Delta)$, we have  $\cL_1(B(0,R))\leq \cO(R^{d-2})$} 
\end{itemize}
\proof{Proof.}
\noindent\textbf{Proof of (i).} Note that $g_{\Delta,\epsilon}(\cdot)$ is a smooth concatenation of three functions. By direct computation, one can check that $g_{\Delta,\epsilon}(x_1)$ is continuously differentiable, non-negative, coercive with bounded level sets w.r.t. the variable $x_1$. Consequently, it is straightforward to show the whole function  $f_{\Delta,\epsilon}(x_1,x_2) = \Delta\cdot g_{\Delta,\epsilon}(x_1) + (x_1^{d-2}+1)x_2^2$ to be continuously differentiable, non-negative, and coercive with bounded level sets, given that $d\geq4$ is an even number. \\
\noindent\textbf{Proof of (ii).} Note that $g_{\Delta,\epsilon}(x_1)$ is monotonically decreasing for $x_1\leq \Delta\epsilon^{-200}$ and is monotonically increasing for $x_1\geq \Delta\epsilon^{-200}$.	Thus $\Delta\cdot g_{\Delta,\epsilon}(x_1)$ is minimized at $x_1^*=\Delta\epsilon^{-200}$. On the other hand, the term $(x_1^{d-2}+1)x_2^2$ is non-negative and is minimized at $x_2^*=0$ regardless of $x_1$. Thus the optimal solution of $f_{\Delta,\epsilon}$ is $(x^*_1,x_2^*)=(\Delta\epsilon^{-200},0)$. Note that $r'(x_1) = -\frac{1}{(2+\ln(1+x_1^2))^2}\cdot\frac{2x_1}{1+x_1^2}$, defaulting $x^0=0$ as the initial solution, then the function value gap satisfies 
\begin{eqnarray*}
	\Delta_f &:=& f_{\Delta,\epsilon}(0)-f_{\Delta,\epsilon}(x^*) = \Delta\left(r(1)-r'(1) - b_{\Delta,\epsilon}\right)\\
	& \,= & \Delta\left(\frac{1}{2+\ln 2}+\frac{1}{(2+\ln 2)^2}-\frac{1}{2+\ln (1+(x_1^*)^2)}+\frac{x_1^*/(1+(x_1^*)^2)}{\big(2+\ln (1+(x_1^*)^2)\big)^2}\right)\\
	& \,= & \Theta(\Delta).
\end{eqnarray*} 
\noindent\textbf{Proof of (iii).} For this property, we need more detailed computation. Suppose $\bar x$ is an $\epsilon$-stationary pint such that  $\|\nabla f_{\Delta,\epsilon}(\bar x)\|^2\leq \epsilon$. We discuss the magnitude of $\bar x$ in three cases. \\
\textbf{Case 1: $\bar x_1\geq \epsilon^{-200}\Delta-1$.} In this case, $\|\bar x\|\geq |\bar x_1|\geq \Omega(\Delta/\sqrt{\epsilon})$ is straightforward. \vspace{0.1cm}\\
\textbf{Case 2: $\bar x_1\leq 1$.} In this situation, the direct computation gives 
$$\nabla f_{\Delta,\epsilon}(\bar x) = \begin{bmatrix}
	-\frac{\Delta}{(2+\ln 2)^2} + (d-2)\bar{x}_1^{d-3}\bar{x}_2^2\\
	2(\bar{x}_1^{d-2}+1)\bar{x}_2
\end{bmatrix}$$
Since $d\geq4$ and $d$ is even, we have $\bar{x}_1^{d-2}+1\geq 1$. As a result,
\begin{equation}
	\label{0-3}
	\epsilon\geq \|\nabla f_{\Delta,\epsilon}(\bar x)\|^2 \geq 4(\bar{x}_1^{d-2}+1)^2\bar{x}_2^2\geq 4\bar{x}_2^2.
\end{equation}  
On the other hand, we also have 
\begin{equation}
	\label{0-3.5}
	\sqrt{\epsilon}\geq \|\nabla f_{\Delta,\epsilon}(\bar x)\| \geq \left|-\frac{\Delta}{(2+\ln 2)^2} + (d-2)\bar{x}_1^{d-3}\bar{x}_2^2\right|.
\end{equation} 
However, the above two inequalities cannot hold simultaneously. If $\bar x_1\leq0$, then $\bar{x}_1^{d-3}\bar{x}_2^2\leq 0$ since $d-3$ is odd. Because we assume $\Delta\gg\sqrt{\epsilon}$, this further results in the contradiction with \eqref{0-3.5} that 
$$\sqrt{\epsilon}\geq \left|-\frac{\Delta}{(2+\ln 2)^2} + (d-2)\bar{x}_1^{d-3}\bar{x}_2^2\right|\geq \frac{\Delta}{(2+\ln 2)^2}$$
If $\bar x_1\geq 0$, together with $\bar x_1\leq 1$ and \eqref{0-3}, we have $(d-2)\bar{x}_1^{d-3}\bar{x}_2^2\leq (d-2)\epsilon/4$. This again raise a contradiction between \eqref{0-3.5} and $\Delta\gg\sqrt{\epsilon}$.
Overall, we exclude the possibility that $\bar x_1\leq 1$. \vspace{0.1cm}\\
\textbf{Case 3: $1< \bar x_1 < \epsilon^{-200}\Delta-1$.} In this situation, the direct computation gives 
$$\nabla f_{\Delta,\epsilon}(\bar x) = \begin{bmatrix}
	\Delta r'(\bar{x}_1)+ (d-2)\bar{x}_1^{d-3}\bar{x}_2^2\\
	2(\bar{x}_1^{d-2}+1)\bar{x}_2
\end{bmatrix},$$
where $r'(\bar{x}_1) = -\frac{1}{(2+\ln(1+\bar{x}_1^2))^2}\cdot\frac{2\bar{x}_1}{1+\bar{x}_1^2}$. Now we assume $\Delta r'(\bar{x}_1) \leq -2\sqrt{\epsilon}$ and prove a contradiction. 
First, $|\nabla_{x_1}f_{\Delta,\epsilon}(\bar x)|\leq \sqrt{\epsilon}$ indicates that $\Delta r'(x_1)+ (d-2)\bar{x}_1^{d-3}\bar{x}_2^2\geq -\sqrt{\epsilon}$. That is, 
$$(d-2)\bar{x}_1^{d-3}\bar{x}_2^2\geq-\sqrt{\epsilon} + \frac{\Delta}{(2+\ln(1+\bar{x}_1^2))^2}\cdot\frac{2\bar{x}_1}{1+\bar{x}_1^2}\geq\frac{\Delta}{(2+\ln(1+\bar{x}_1^2))^2}\cdot\frac{\bar{x}_1}{1+\bar{x}_1^2}.$$
This further indicates that $\bar{x}_2^2\geq\frac{\Delta/(d-2)}{(2+\ln(1+\bar{x}_1^2))^2}\cdot\frac{1}{(1+\bar{x}_1^2)\bar x_1^{d-4}}$. Substitute it to $|\nabla_{x_2} f(\bar x)|^2\leq \epsilon$ yields
\begin{eqnarray*}
	\epsilon\geq 4(1+\bar x_1^{d-2})^2\bar{x}_2^2\geq\frac{\Delta/(d-2)}{(2+\ln(1+\bar{x}_1^2))^2}\cdot\frac{(1+\bar x_1^{d-2})^2}{(1+\bar{x}_1^2)\bar x_1^{d-4}}.
\end{eqnarray*}
Note that the function $\frac{\Delta/(d-2)}{(2+\ln(1+\bar{x}_1^2))^2}\cdot\frac{(1+\bar x_1^{d-2})^2}{(1+\bar{x}_1^2)\bar x_1^{d-4}}$ is monotonically increasing on $[1,+\infty)$. The above inequality further indicates that $\epsilon\geq \frac{2\Delta/(d-2)}{(2+\ln 2)^2}$,
which is impossible since we assume $\Delta\gg \sqrt{\epsilon}$. Thus we conclude $\Delta r'(\bar{x}_1) \geq -2\sqrt{\epsilon}$, namely, 
$$\frac{\Delta}{(2+\ln(1+\bar{x}_1^2))^2}\cdot\frac{2\bar{x}_1}{1+\bar{x}_1^2}\leq 2\sqrt{\epsilon}.$$
This further indicates that $\bar x_1\geq \Omega\left(\frac{\Delta_f}{\sqrt{\epsilon}\ln^2(\Delta_f/\sqrt{\epsilon})}\right)$.\vspace{0.1cm}\\
\textbf{Combine Case 1-3:} Overall, we must have $\bar x_1\geq \Omega\left(\frac{\Delta_f}{\sqrt{\epsilon}\ln^2(\Delta_f/\sqrt{\epsilon})}\right)$ and thus $\|\bar x\|\geq \Omega\left(\frac{\Delta_f}{\sqrt{\epsilon}\ln^2(\Delta_f/\sqrt{\epsilon})}\right)$ holds for all $\epsilon$-stationary point $\bar x$. \vspace{0.1cm} \\
\noindent\textbf{Proof of (iv).} First, notice that $\nabla_{x_2}^2f_{\Delta,\epsilon}(x_1,x_2) = 2(x_1^{d-2}+1)$. Since $x^* = (\Delta\epsilon^{-200},0)\in\text{Lev}(0)$, we have $\cL_1(\text{Lev}(0))\geq2(\Delta\epsilon^{-200})^{d-2}+2\geq \Delta^{d-2}\epsilon^{-200(d-2)}$. Next, to compute the Lipschitz constant in the ball $B(0,R)$, let us compute the Lipschitz constants of the two parts of $f_{\Delta,\epsilon}(\cdot)$ separately. For the term $(x_1^{d-2}+1)x_2^2$, its Lipschitz constant in $B(0,R)$ can by bounded by the Frobenius norm of its Hessian:  
$$\left\|\begin{bmatrix}
	(d-2)(d-3)x_1^{d-3}x_2^2 & 2(d-2)x_1^{d-3}x_2\\2(d-2)x_1^{d-3}x_2 & 2(x_1^{d-2}+1)
\end{bmatrix}\right\|_F \leq \cO(R^{d-2}).$$
For the other term  
$\Delta g_{\Delta,\epsilon}(x_1)$, which is a smooth concatenation of several pieces, it is straightforward to see that $\nabla(\Delta g_{\Delta,\epsilon})(\cdot)$ is $0$-Lipschitz continuous for $x_1\leq1$, and $2\Delta a_{\Delta,\epsilon}$-Lipschitz for $x_1\geq \Delta\epsilon^{-200}-1$. More specifically, we have 
$$2\Delta a_{\Delta,\epsilon} =\frac{\Delta}{(2+\ln(1+x_1^2))^2}\cdot\frac{x_1}{1+x_1^2}\ll1, \quad\text{with}\quad x_1 = \Delta\epsilon^{-200}-1.$$ 
For the last piece with $1<x_1<\Delta\epsilon^{-200}-1$, we have $$|\nabla^2(\Delta g_{\Delta,\epsilon})(x_1)|=\Delta|r''(x_1)|=\Delta\left|\frac{1}{A^2B} - \frac{4x_1^2}{A^3B}-\frac{2x_1^2}{A^2B}\right| \leq \cO(\Delta)$$
where $A = 2+\ln(1+x_1^2)$ and $B=1+x_1^2$. Overall, we know $\nabla (\Delta g_{\Delta,\epsilon})(\cdot)$ is $\cO(\Delta)$-Lipschitz continuous globally. Combined with the discussion of the other polynomial term, we know $\nabla f_{\Delta,\epsilon}(\cdot)$ is $\cO(R^{d-2})$-Lipschitz continuous in the ball $B(0,R)$. Namely, $\cL_1(B(0,R)) = \cO(R^{d-2})$. $\qquad\qquad\qquad\qquad\,\,$ \vspace{0.1cm}
\endproof

\noindent\textbf{Comparison of different methods.}
Now for any predetermined small target accuracy $\epsilon>0$, we choose to optimize the instance $f(x):= f_{\Delta,\epsilon}(x)$ and compare the following approaches. In particular, one can choose $h(x) = \alpha\|x\|^d+\beta\|x\|^2$ for BPG, \cite{li2019provable}.  

\noindent\textbf{Standard GD.} Because $\mathrm{Lev}(0)$ is bounded, one standard approach is to set a constant stepsize $\eta = \frac{1}{\cL_1(\mathrm{Lev}(0))}$ and update $x^{k+1} = x^k - \eta\nabla f(x^k)$. Standard convergence result of GD indicates that  $\cO(\Delta_f \epsilon^{-1} \cL_1(\mathrm{Lev}(0))) = \cO\left(\frac{\Delta^{d-1}_f}{\epsilon^{200(d-2)+1}}\right)$ iterations are needed for finding $\epsilon$-stationary points.

\noindent\textbf{SLO.} SLO automatically bounds $R_T = \cO\left(\frac{\Delta_f}{\sqrt{\epsilon}}\right)$, 
hence $\cL_1(B(0,R_T))=\cO\left(\frac{\Delta^{d-2}_f}{\epsilon^{\frac{d-2}{2}}}\right)$. 
Then SLO with PGD, TGD and Normalized Armijo has a complexity of $\cO\left(\epsilon^{-1}\Delta_f \cL_1(B(0,R_T))\right) = \cO\left(\frac{\Delta^{d-1}_f}{\epsilon^{d/2}}\right)$, while SLO with accelerated subroutine has a complexity of $\cO\left(\frac{\Delta_f^{3(d-1)/4}}{\epsilon^{3d/8}}\right)$ since further estimation gives $\cL_2(B(0,R_T))=\cO\left(\frac{\Delta^{d-3}_f}{\epsilon^{\frac{d-3}{2}}}\right)$, which improves over the un-accelerated subroutines by $\cO\big(\epsilon^{-d/8}\Delta_f^{(d-1)/4}\big)$ at a cost of requiring local Hessian Lipschitz constant estimation.  

\noindent\textbf{BPG.} By Lemma \ref{lemma:BPG}, we have $\|\nabla f(x^k)\|^2 = \Theta\Big(\|x^k\|^{d-2}D_h(x^{k+1},x^k)\Big)$. Because all  $\epsilon$-stationary point $\bar x$ has $\|\bar x\|\geq\tilde{\Omega}\left(\frac{\Delta_f}{\sqrt{\epsilon}}\right)$ thus $D_h(x^{k+1},x^k) = \Theta\left(\frac{\|\nabla f(x^k)\|^2}{\|x^k\|^{d-2}}\right) = \tilde{\Theta}\left(\frac{\epsilon^{d/2}}{\Delta_f^{d-2}}\right)$ if $x^k$ is close to $\bar x$. Then a \emph{necessary} (but not sufficient) condition for BPG to find an $\epsilon$-solution should be  $D_h(x^{k+1},x^k) \leq \tilde{\cO}\left(\frac{\epsilon^{d/2}}{\Delta_f^{d-2}}\right)\ll\epsilon$. \emph{Assume} BPG can bound the norms of all the iterates until finding an $\epsilon$-solution by $R_\epsilon=\cO\left(\frac{\Delta_\epsilon}{\sqrt{\epsilon}}\right)$ (an assumption which has \emph{not} been proved by existing works yet), then the complexity of BPG will be $\cO\left(\frac{L\cdot \Delta_f}{R^{d-2}\epsilon}\right) = \cO\left(\frac{\Delta^{d-1}_f}{\epsilon^{d/2}}\right)$. 

As a remark, the comparison with standard GD shows the advantage of reasonably exploiting the local Lipschitz constant. The comparison with BPG shows that even if the constant $L$ in the BPG complexity is much smaller than the $\cL_1(B(0,R_T))$ in the SLO complexity, the true complexity of BPG may not be better than SLO. This is because the $\cO(1/k)$ convergence of $D_h(x^{k+1},x^k)$ does not always imply the $\cO(1/k)$ convergence of $\|\nabla f(x^k)\|^2$ in No-Lip-Grad optimization.

\section{Numerical experiments}\label{sec:numerical}
In this section, we present some preliminary numerical demonstrations of our SLO framework to the symmetric tensor decomposition problem \eqref{prob:SymTD}, where BPG and standard Armijo linesearch (denoted as std-Armijo) are selected as benchmarks. Additional experiments on linear autoencoder training problem \eqref{prob:autoencoder} and the supervised deep linear neural network training \eqref{prob:supervised-dnn} are deferred to the Appendix. We apply SLO with subroutines $\cA_{GP}(\cdot)$, $\cA_{TG}(\cdot)$ and $\cA_{LS}(\cdot)$, which are denoted as PGD, TGD and Norm-Armijo respectively. We set the activation function as $\sigma(x):=x$ s.t. the training loss is polynomial and an $L$-smad function is available \cite{li2019provable}. 

The implementation details are listed below. For TGD and PGD, the parameter $L_1^\tau$ is estimated by randomly sampling a number of points in the ball $B(x^\tau_0,D)$, and then we set $L_1^\tau$ to be the maximum ratio $\|\nabla f(x) - \nabla f(x')\|/\|x-x'\|$ among the sampled points. The time for estimating these constants are also counted in the running time of TGD and PGD. For both Std-Armijo and Norm-Armijo, the search direction is chosen as $-\nabla f(x_t)$ and we set $\sigma \equiv 0.3$ in line search rule \eqref{eqn:lsearch-rule}. Finally, for BPG, we set the $L$-smad function as $h(x):= \frac{1}{n}\|x\|_2^n + \frac{1}{2}\|x\|_2^2$ according to \cite{li2019provable},
where $n$ is the degree of the polynomial. The implementation of the BPG subproblem is detailed in the Appendix. The running time of solving a BPG subproblem to $\epsilon_0$-accuracy is only $\cO(\log(1/\epsilon_0))$. In all the experiments, we set $\epsilon_0 = 10^{-16}$ as the machine accuracy. 

\begin{figure*}[h]
	\centering 
	\includegraphics[width=0.25\linewidth]{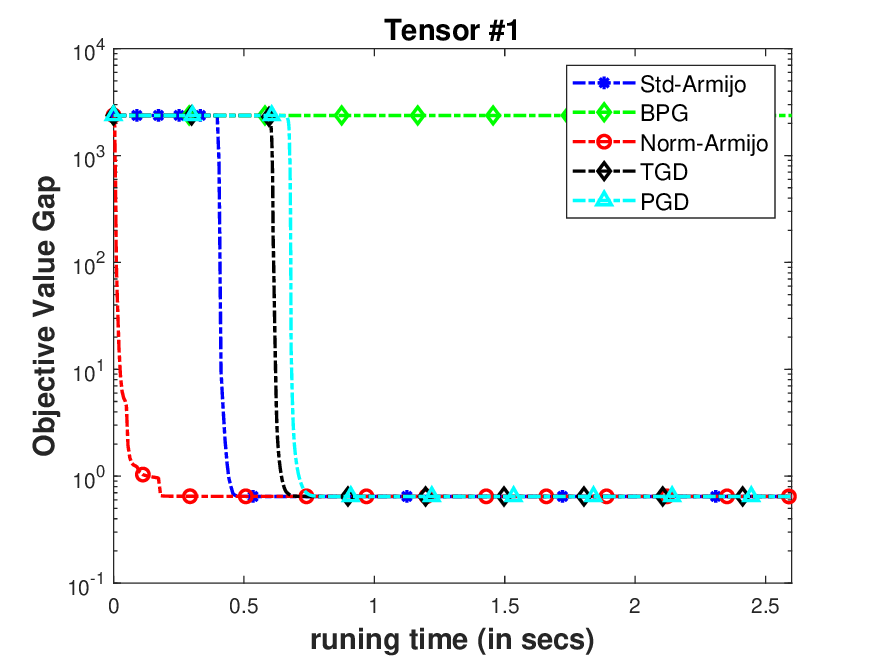}\hspace{-0.18cm}
	\includegraphics[width=0.25\linewidth]{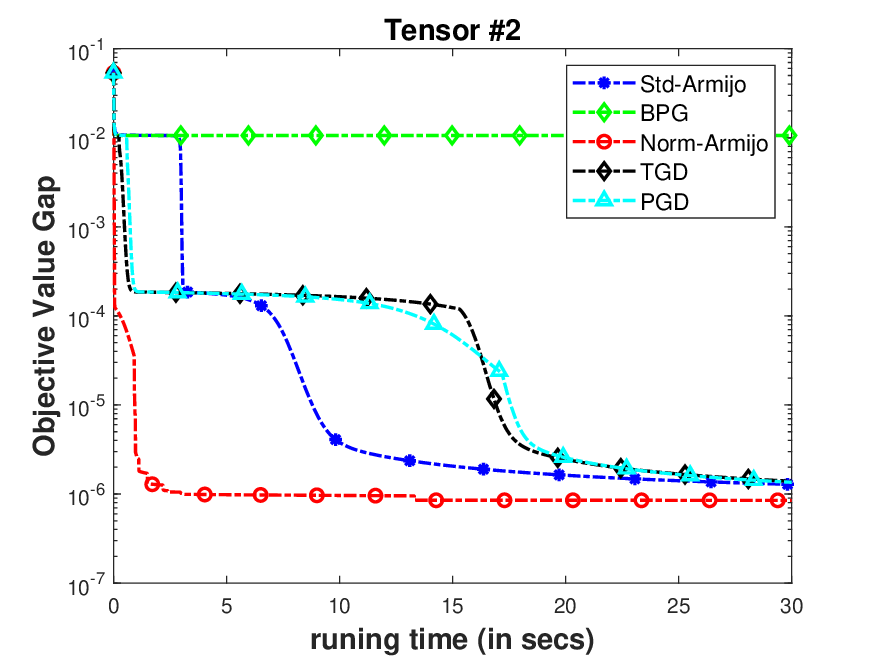}\hspace{-0.18cm}
	\includegraphics[width=0.25\linewidth]{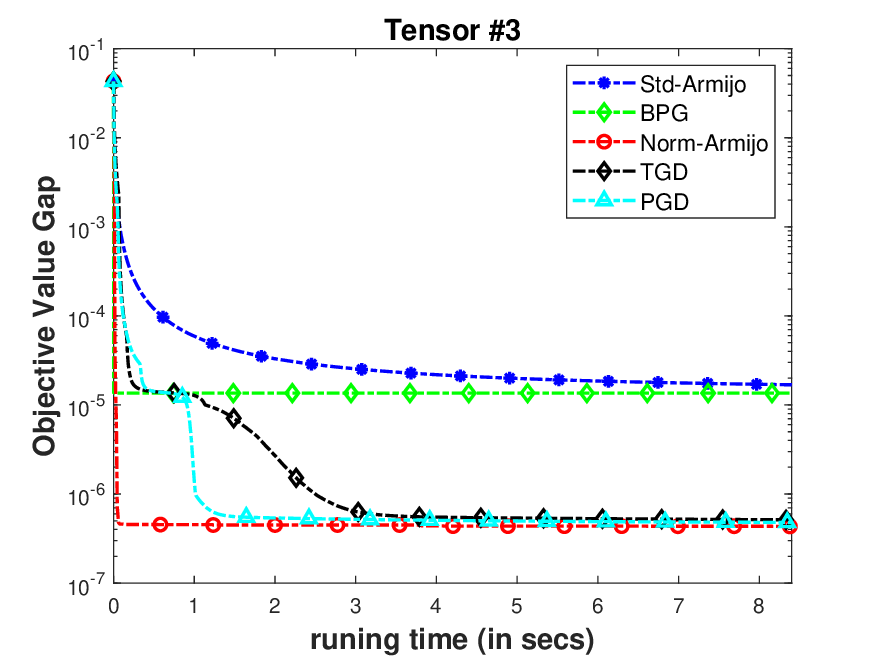}\hspace{-0.18cm}
	\includegraphics[width=0.25\linewidth]{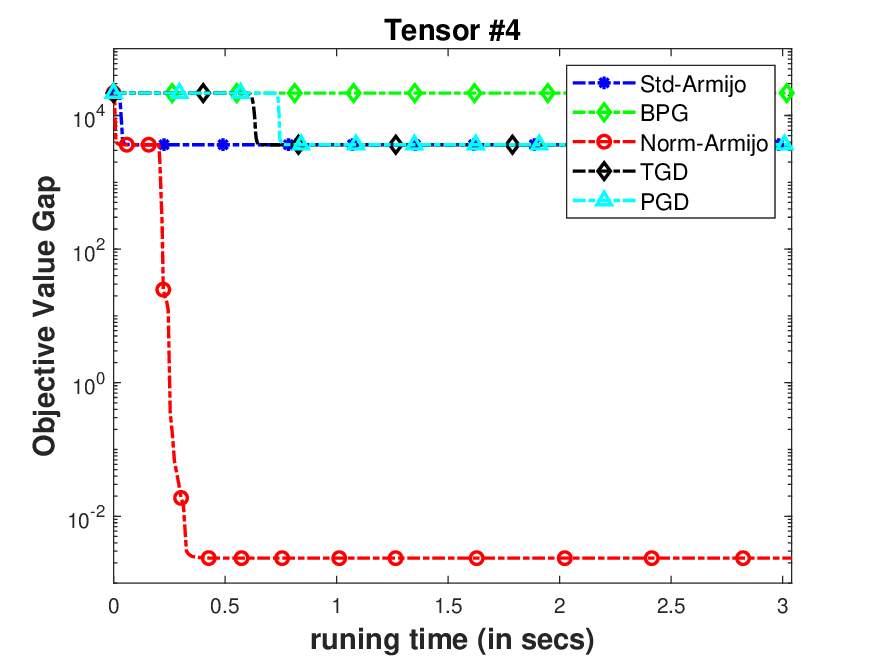}\\[1ex]
	\vfill  
	\includegraphics[width=0.255\linewidth]{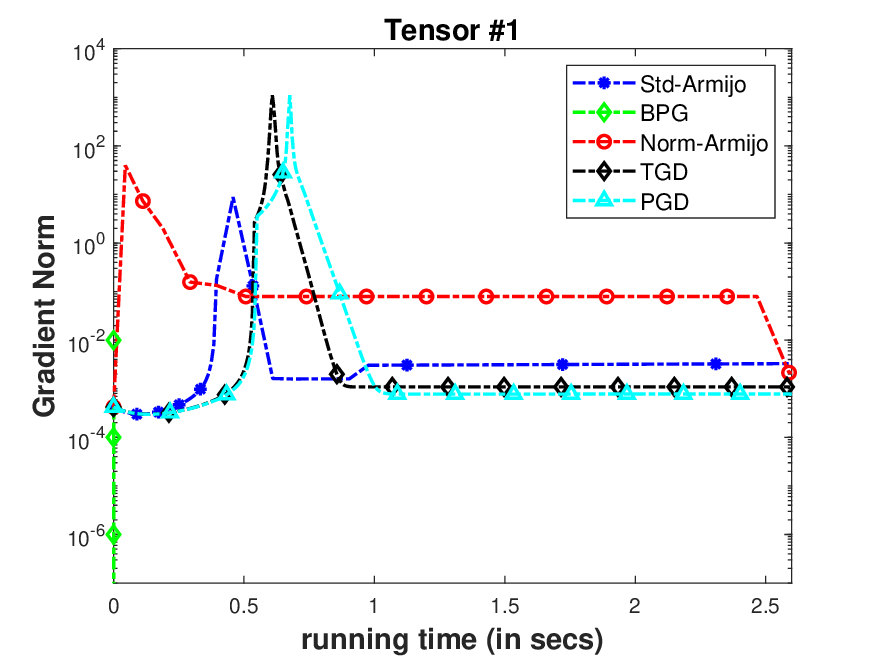} \hspace{-0.39cm}
	\includegraphics[width=0.255\linewidth]{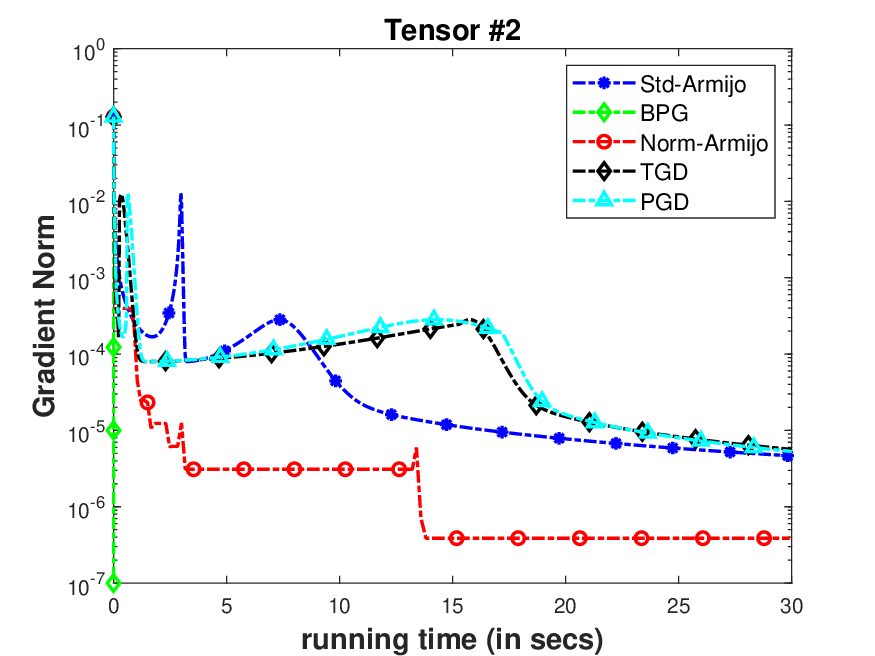}
	\hspace{-0.39cm}
	\includegraphics[width=0.255\linewidth]{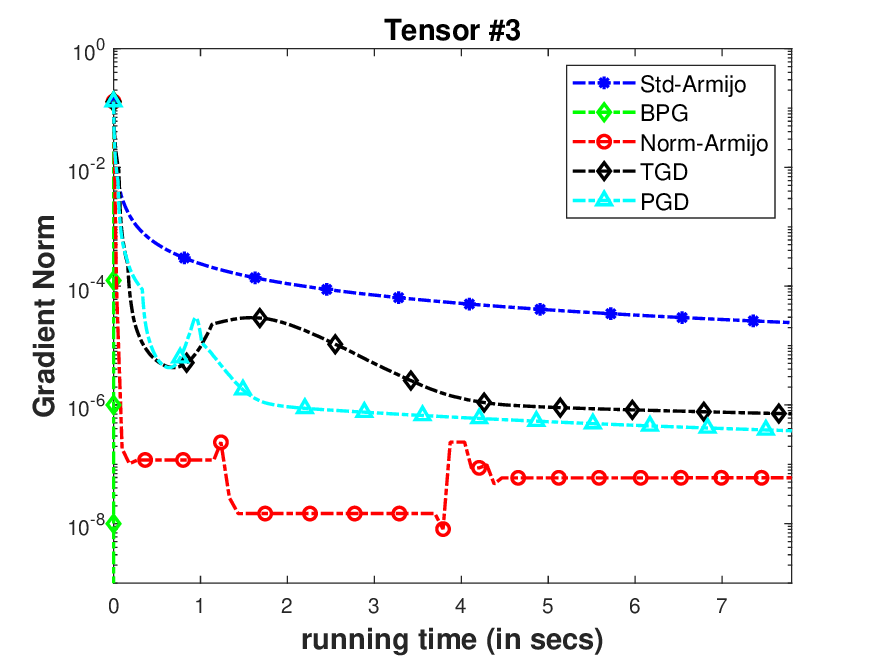}
	\hspace{-0.37cm}
	\includegraphics[width=0.255\linewidth]{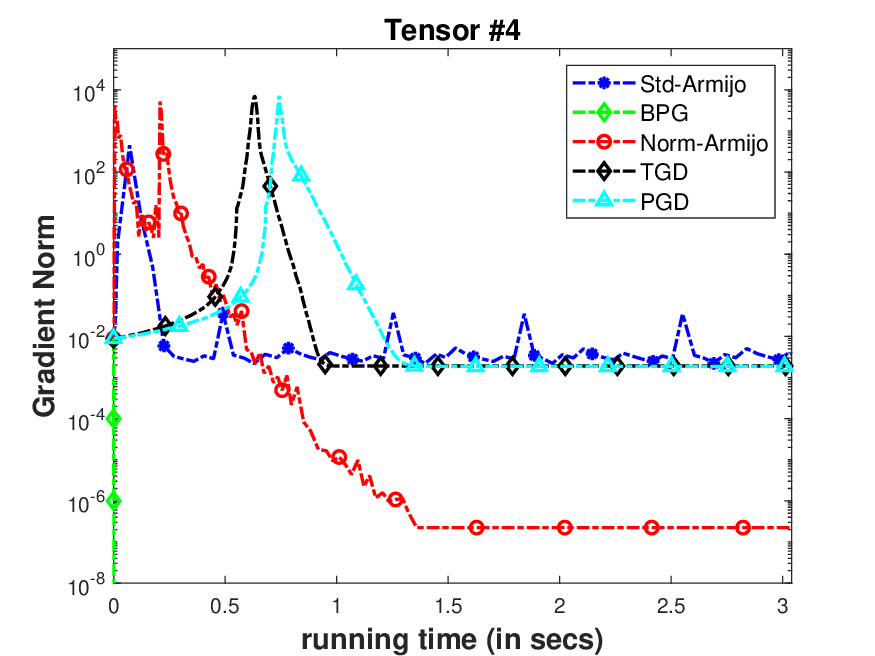}
	\caption{Numerical results on symmetric tensor decomposition, over  representative initial points. }
	\label{fig:SymTD} 
\end{figure*}

For the symmetric tensor decomposition problem \eqref{prob:SymTD}, we let the dimension $k$ to be an odd number so that the variable $\gamma_i$ can be absorbed into the vector $x_i$. Therefore, we solve the problem:
$\mathop{\mathrm{minimize}}_{\bf x} \,\,f({\bf x}) := \big\|\cT - \sum_{i=1}^m \otimes^kx_i\big\|^2,$ where $\otimes^ky$ means the Kronecker product between $k$ $y$'s. The data tensor $\cT$ is a synthesized symmetric tensor. To generate the data $\cT$, we first generate  $\{\hat x_1^*,\hat x_2^*,...,\hat x_m^*\}\subseteq\RR^{d}$ to be a group of $M$ orthonormal bases. Then we multiply each $\hat x^*_i$ by a randomly generated scalar  (which corresponds to $\gamma_i^{1/k}$ in view of \eqref{prob:SymTD}) to create the vectors $\{x_1^*,...,x_m^*\}$. Finally, we set $\cT = \sum_{i=1}^m x_i^*\otimes\cdots\otimes x_i^*$, where ``$\otimes$'' denotes the Kronecker product. In other words, we set $\cT_{j_1,j_2,...,j_k} = \sum_{i=1}^m \Pi_{s=1}^kx_i^*(j_s)$ for any $1\leq j_1,...,j_k\leq d$ where $x_i^*(j_s)$ denotes the $j_s$-th entry of the vector $x_i^*$. In this experiment, we generate a tensor $\cT$ with $d = 8, k = 5, m = 5$. Because $\cT$ has an exact decomposition in the targeted form,  the global optimal value is $f^* = \min_{{\bf x}} f({\bf x}) = 0$. 

\begin{table} 
	\begin{tabular}{ |p{2.5cm}||p{1.3cm}|p{1.3cm}|p{1.45cm}| p{1.4cm}||p{1.3cm}|p{1.3cm}|p{1.5cm}| p{1.45cm}| }
		\hline
		\multicolumn{1}{|c||}{}& \multicolumn{4}{|c||}{Tensor \#1} &  \multicolumn{4}{|c|}{Tensor \#2}\\
		\hline
		\multicolumn{1}{|c||}{}&\multicolumn{2}{|c|}{Gradient norm} &\multicolumn{2}{|c||}{Function value gap} &\multicolumn{2}{|c|}{Gradient norm} &\multicolumn{2}{|c|}{Function value gap}  \\
		\hline 
		& $\,\,\,\,$best & average & $\,\,\,\,$best & average & $\,\,\,\,$best & average & $\,\,\,\,$best & average\\
		\hline
		Std-Armijo         & 5.7e$-$6   & 4.3e$-$5  & 6.5e$-$1 &  5.9e+3  & 3.7e$-$7& 5.7e$-$6&1.1e$-$2&1.1e$-$2\\
		BPG              &   \bf0  &  \bf0 & 2.4e+3 &  2.4e+3 & \bf0 &\bf0 &1.1e$-$2&1.1e$-$2\\
		Norm-Armijo\!\! & 5.6e$-6$ & 7.3e$-$5&  \bf2.2e$-$1 & \bf6.1e$-$1 &3.8e$-\!$8&5.8e$-$7& \bf8.2e$-$7&\bf6.5e$-$4\\
		TGD            & 5.1e$-$6 & 5.9e$-$5   &  6.5e$-$1 & 8.3e+3&2.1e$-$8&9.1e$-$7& 1.9e$-$4&4.3e$-$3\\ 
		PGD            & 5.1e$-$6 & 5.9e$-$5       &  6.5e$-$1 &8.2e+3& 2.7e$-$8&1.3e$-$6&1.9e$-$4&4.3e$-$3\\ 
		\hline
		\hline
		\multicolumn{1}{|c||}{}& \multicolumn{4}{|c||}{Tensor \#3} &  \multicolumn{4}{|c|}{Tensor \#4}\\
		\hline
		\multicolumn{1}{|c||}{}&\multicolumn{2}{|c|}{Gradient norm} &\multicolumn{2}{|c||}{Function value gap} &\multicolumn{2}{|c|}{Gradient norm} &\multicolumn{2}{|c|}{Function value gap}  \\
		\hline
		& $\,\,\,\,$best & average & $\,\,\,\,$best & average & $\,\,\,\,$best & average & $\,\,\,\,$best & average\\
		\hline
		Std-Armijo         & 2.3e$-$\!8  & 1.4e$-$7  & 1.4e$-$5 &  1.4e$-$5  & 5.3e$-$7& 3.1e$-$3 &\bf2.4e$-$3&5.6e+3\\
		BPG              &   \bf0  & \bf 0 & 1.4e$-$5 &  1.4e$-$5 & \bf0 &\bf0 &2.2e+4&2.2e+4\\
		Norm-Armijo & 1.8e$-$10& 1.0e$-$8&  \bf1.2e$-$8 & \bf1.7e$-$7 &2.5e$-$7& 4.3e$-$3& \bf2.4e$-$3&\bf9.1e$+$2\\
		TGD            & 1.7e$-$9 & 3.4e$-$8   &  1.1e$-$7 & 5.6e$-$6 &2.5e$-$6&5.1e$-$3& 3.6e$+$3& 5.8e$+$3\\ 
		PGD            & 1.9e$-$9 & 2.9e$-$8       &  1.2e$-$8 &4.8e$-$6& 2.5e$-$6&5.2e$-$3&3.6e$+$3& 5.8e$+$3\\ \hline
	\end{tabular}
	\caption{Numerical experiment of symmetric tensor decomposition. For each synthetic tensor, we generate the initial solutions elementwisely from a uniform distribution over $[0,0.1]$.  $\qquad\qquad\qquad\,\,$ }
	\label{table:symTD} 
\end{table}

We randomly synthesize 4 tensors. Due to nonconvexity, different initial points may lead to different stationary points, which vary significantly in objective values.  Hence it is not reasonable to plot the averaged curve over multiple random initialization. For each synthetic tensor we plot a representative curve from one single initial solution.  We report the gradient size and objective value gap for all the methods in Figure \ref{fig:SymTD} and Table \ref{table:symTD}, where  Table \ref{table:symTD} contains the best and average result over 20 rounds of all methods. In each round, all methods run for $10$ seconds from the same initialization. As the per-iteration time of TGD, PGD is much smaller than that of BPG and line search methods, we choose the running time instead of iteration number for fair comparison.

In this experiment, we can observe that BPG is the fastest in reducing gradient size. However, BPG also gets trapped at some local solutions, while  Std-Armijo, Norm-Armijo, PGD and TGD find better local solutions. Finally, it is interesting to observe that a sharp increase in gradient size in the above figure corresponds exactly to a sharp decrease in the objective value, which characterizes the process of escaping saddle points or bad local solutions. In addition, it is also worth mentioning that this is only a experiment that aims to validate SLO as a meaningful alternative of BPG method that does not require constructing smooth adaptable functions. Because both the test problem and the experimental settings are of toy sizes, the numerical results cannot be interpreted as showing that SLO with normalized Armijo linesearch has an advantage over BPG in finding stationary points with better objective values in nonconvex optimization.

\section{Conclusion}
In this work, we focus on the nonconvex optimization problems without globally Lipschitz continuous gradient. We propose and analyze a generic framework called the Sequential Local Optimization, which possesses strong theoretical guarantees, while being flexible to incorporate various first-order algorithms as subroutines,  including gradient projection , truncated gradient descent, a novel variant of Armijo linesearch, and a CUPG acceleration. Compared with the existing BPG method, the proposed framework does not require global information of the problem to construct an $L$-smad functions and evaluate Bregman proximal mappings. We believe that the proposed SLO framework can serve as a useful alternative to the BPG-type algorithms when dealing with problems without Lipschitz gradients.

\appendix

\section{Proof of Lemma 4.2}
\label{appdx:lemma:NC-Pair}

\noindent\textbf{Lemma 4.2.}\,\,\emph{Suppose $\hat L_1$ is chosen so that $\nabla \hat f(\cdot)$ is $\hat L_1$-Lipschitz continuous in $X$ and the following inequality holds (even if $\hat x_s\notin X$):}
\begin{align}
	\hat f(\hat y_{s+1}) & \leq \hat f(\hat x_s) + \langle \nabla \hat f(\hat x_s), \hat y_{s+1}-\hat x_s\rangle + \frac{\hat L_1}{2}\|\hat y_{s+1}-\hat x_s\|^2, 	\; s=0, 1, \cdots, t-1.
\end{align}
\emph{When ${\rm Flag} = 2,4$,  then \texttt{Find-NC-Pair}($\cdot$) must be able to find an NC-pair $(u,v)$ such that}
\begin{equation}
	\label{defn:nc-pair}
	\hat f(u)<\hat f(v) + \langle\nabla \hat f(v),u-v\rangle + \frac{\alpha}{2}\|u-v\|^2.
\end{equation}
\emph{Moreover, }
\begin{equation}
	\label{cor:NC-Pair-1}
	\max\{\hat f(\hat y_1),...,\hat f(\hat y_{t-1}),\hat f(u)\} \leq \hat f(\hat y_0).
\end{equation} 

\noindent To prove this lemma, we should first provide the following preliminary result.

\begin{lemma}
	\label{lemma:AGP}
	Let $\{\hat y_0,...,\hat y_t\}$ and $\{\hat x_0,...,\hat x_t\}$ be generated by \texttt{AGP-UPG}. Suppose that for $s = 0,1,...,t-1$, the following conditions hold: 
	\begin{align}
		\hat f(\hat y_s) & \geq \hat f(\hat x_s) + \langle \nabla \hat f(\hat x_s), \hat y_s-\hat x_s\rangle + \frac{\alpha}{2}\|\hat y_s-\hat x_s\|^2, 	\label{lm:AGP-cond-1}  \\
		\hat f(\hat y_{s+1}) & \leq \hat f(\hat x_s) + \langle \nabla \hat f(\hat x_s), \hat y_{s+1}-\hat x_s\rangle + \frac{\hat L_1}{2}\|\hat y_{s+1}-\hat x_s\|^2, 	\label{lm:AGP-cond-1'}  \\
		\hat f(w) & \geq \hat f(\hat x_s) + \langle \nabla \hat f(\hat x_s), w-\hat x_s\rangle + \frac{\alpha}{2}\|w-\hat x_s\|^2. 	\label{lm:AGP-cond-2}
	\end{align}
	Then 
	\begin{equation}
		\label{lm:AGP-main}
		\hat f(\hat y_s) - \hat f(w) \leq \Big(1-\frac{1}{\sqrt{\kappa}}\Big)^s\psi(w)\qquad\mbox{for}\qquad s = 0,1,...,t,
	\end{equation}
	where $\psi(w) = \hat f(\hat y_0) - \hat f(w) + \frac{\alpha}{2}\|w-\hat y_0\|^2.$
\end{lemma}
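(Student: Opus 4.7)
\textbf{Proof plan for Lemma \ref{lemma:AGP}.} The three hypotheses are tailored so that $\hat f$ behaves, along the trajectory, like an $\alpha$-strongly convex and $\hat L_1$-smooth function even though the analysis takes place in a projected setting. My plan is to recast the statement as the classical Nesterov accelerated gradient descent convergence analysis in the strongly convex regime, using the potential function (Lyapunov) approach. The convergence factor $1-1/\sqrt{\kappa}$ with $\kappa=\hat L_1/\alpha$ matches exactly the rate of Nesterov's accelerated method with $\omega=(\sqrt{\kappa}-1)/(\sqrt{\kappa}+1)$, so one expects the three hypothesized inequalities to be sufficient substitutes for (i) strong convexity at the two anchor points $\hat y_s, w$ and (ii) the descent-lemma step from $\hat x_s$ to $\hat y_{s+1}$.

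Concretely, I would first introduce an auxiliary ``momentum'' sequence $v_s$, defined so that $\hat x_s$ lies on the segment between $\hat y_s$ and $v_s$, and so that the update $\hat x_s=\hat y_s+\omega(\hat y_s-\hat y_{s-1})$ translates into the $v$-recursion $v_{s+1}=(1-\lambda)v_s+\lambda\hat y_{s+1}+(\lambda/\alpha)\cdot(\text{gradient-like term at }\hat x_s)$, where $\lambda:=1/\sqrt{\kappa}$. This is the standard change of variables that converts momentum into mirror-descent form. The candidate Lyapunov function is
\[
V_s := \hat f(\hat y_s)-\hat f(w)+\tfrac{\alpha}{2}\|v_s-w\|^2,
\]
and the base case is immediate because $v_0=\hat y_0$, giving $V_0=\psi(w)$.

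The inductive step is the heart of the argument: I would show $V_{s+1}\le(1-\lambda)V_s$. To do this I would take the convex combination $(1-\lambda)\cdot\eqref{lm:AGP-cond-1}+\lambda\cdot\eqref{lm:AGP-cond-2}$ to obtain a strong-convexity-type lower bound on $(1-\lambda)\hat f(\hat y_s)+\lambda\hat f(w)$ in terms of $\hat f(\hat x_s)$, $\nabla\hat f(\hat x_s)$, and squared distances to $\hat x_s$. Then I would feed the descent inequality \eqref{lm:AGP-cond-1'} into the picture to bound $\hat f(\hat y_{s+1})$ from above. With the $v_s$ update chosen as above, the cross terms involving $\nabla\hat f(\hat x_s)$ combine (using the identity $\hat x_s-\hat y_s=\omega(\hat y_s-\hat y_{s-1})$, which links $\hat x_s$ to both $\hat y_s$ and $v_s$), and the squared-distance terms collapse via the $v_s$-recursion into $(1-\lambda)\tfrac{\alpha}{2}\|v_s-w\|^2+\tfrac{\alpha}{2}\|v_{s+1}-w\|^2$. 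Iterating $V_{s+1}\le(1-\lambda)V_s$ and dropping the nonnegative $\tfrac{\alpha}{2}\|v_s-w\|^2$ term in the final bound yields \eqref{lm:AGP-main}.

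The main obstacle will be the bookkeeping in the inductive step: specifically, identifying the correct definition of $v_s$ (the precise scalar that makes $\hat x_s$ a convex combination of $\hat y_s$ and $v_s$) so that all the $\nabla\hat f(\hat x_s)$ cross terms cancel and the squared-distance terms telescope with the right coefficient $1-\lambda$. This is standard but subtle Nesterov-style algebra; the projection does not complicate it because every place where smoothness or the projection step would normally be invoked is replaced wholesale by the hypotheses \eqref{lm:AGP-cond-1}--\eqref{lm:AGP-cond-2}. Once the contraction $V_{s+1}\le(1-\lambda)V_s$ is established, the stated bound follows immediately by induction from $s=0$ up to $s=t$.
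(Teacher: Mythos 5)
Your overall strategy is sound and is in fact essentially equivalent to what the paper does: the paper runs Nesterov's estimate-sequence argument, defining quadratics $\phi_s(z)=\phi_s^*+\frac{\alpha}{2}\|z-v_s\|^2$ with $\phi_s^*\ge\hat f(\hat y_s)$ and $\phi_s(w)\le\hat f(w)+(1-1/\sqrt{\kappa})^s\psi(w)$; evaluating at $z=w$ gives precisely your contraction $V_s\le(1-1/\sqrt{\kappa})^sV_0$ for $V_s=\hat f(\hat y_s)-\hat f(w)+\frac{\alpha}{2}\|v_s-w\|^2$. The key identity you anticipate needing --- that $\hat x_s$ is the correct combination of $\hat y_s$ and $v_s$ so that the cross terms vanish --- is exactly the paper's computation $\sqrt{\kappa}\,\hat y_k+v_k-(1+\sqrt{\kappa})\hat x_k=0$, which propagates from $v_0=\hat x_0=\hat y_0$.

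The genuine gap is your closing claim that ``the projection does not complicate it because every place where smoothness or the projection step would normally be invoked is replaced wholesale by the hypotheses.'' That is not true as stated. The hypotheses \eqref{lm:AGP-cond-1} and \eqref{lm:AGP-cond-2} are lower bounds written in terms of $\nabla\hat f(\hat x_s)$, but in the projected setting $\hat y_{s+1}=\mathbf{Proj}_X\{\hat x_s-\hat L_1^{-1}\nabla\hat f(\hat x_s)\}\neq\hat x_s-\hat L_1^{-1}\nabla\hat f(\hat x_s)$, so if you carry $\nabla\hat f(\hat x_s)$ into the $v$-recursion the gradient cross terms will not cancel against the descent inequality \eqref{lm:AGP-cond-1'}, which is evaluated at the \emph{projected} point. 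The object that must appear in the recursion is the gradient mapping $\cG_{\hat L_1}(\hat x_s)=\hat L_1(\hat x_s-\hat y_{s+1})$, and to convert the lower bounds at $\hat y_s$ and $w$ from $\nabla\hat f(\hat x_s)$ into $\cG_{\hat L_1}(\hat x_s)$ you need the additional variational inequality $\langle\nabla\hat f(\hat x_s)-\cG_{\hat L_1}(\hat x_s),\,x-\hat y_{s+1}\rangle\ge0$ for all $x\in X$ --- the first-order optimality condition of the projection, applied with $x=\hat y_s$ and $x=w$ (both of which must lie in $X$). This is exactly the content of the paper's Lemma \ref{lemma:descent-nc}, and it is the one place where the argument genuinely departs from the unconstrained Proposition 1 of \cite{carmon2017convex}. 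Without this ingredient the inductive step $V_{s+1}\le(1-1/\sqrt{\kappa})V_s$ cannot be closed; with it, your plan goes through and coincides with the paper's proof.
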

The form of this lemma is the same as the Proposition 1 of \cite{carmon2017convex}. However, Proposition 1 of \cite{carmon2017convex} is derived for (unconstrained) accelerated gradient descent updates while the Lemma \ref{lemma:AGP} here is derived for the \emph{projected} accelerated gradient descent updates. Therefore, we provide the proof of this lemma in Appendix \ref{appdx:lemma:AGP} for completeness. With this lemma we proceed to the proof of Lemma 4.2.

\begin{proof} 
	{\bf When Flag = 2}: In this case $\hat f(\hat y_t)>\hat f(\hat y_0)$ and $w = \hat y_0$.  If we cannot find an NC-pair, then the conditions \eqref{lm:AGP-cond-1} and \eqref{lm:AGP-cond-2} hold true for Lemma \ref{lemma:AGP}. Since \eqref{lm:AGP-cond-1'} always holds due to our selection of $\hat L_1$, therefore by \eqref{lm:AGP-main}, we will have the following contradiction: 
	$$0<\hat f(\hat y_t) - \hat f(\hat y_0)\leq  \Big(1-\frac{1}{\sqrt{\kappa}}\Big)^t\cdot\psi(\hat y_0) = 0.$$

	{\bf When Flag = 4}: In this case there are two possibilities from the output of \texttt{Certify-Progress}. 
	In the first situation $\hat f(\hat y_t)>\hat f(\hat y_0)$ and $w = \hat y_0$, which is the same as the case where Flag = 2. In the second situation, $w = \hat y_t - \hat L_1^{-1}\cdot\nabla \hat f(\hat y_t)$ and $\|\nabla \hat f(\hat y_t)\|^2>2\hat L_1\psi(w)\cdot\exp\{-\frac{t}{\sqrt{\kappa}}\}$.  If we cannot find an NC-pair, then again the conditions \eqref{lm:AGP-cond-1} and \eqref{lm:AGP-cond-2} hold true. Since \eqref{lm:AGP-cond-1'} always holds, then by Lemma \ref{lemma:AGP}, we have \eqref{lm:AGP-main}, which yields another contradiction that
	$$\exp\Big\{-\frac{t}{\sqrt{\kappa}}\Big\}\cdot \psi(w)<\frac{1}{2\hat L_1}\|\nabla \hat f(\hat y_t)\|^2\leq \hat f(\hat y_t) - \hat f(w)\leq \Big(1-\frac{1}{\sqrt{\kappa}}\Big)^t\cdot\psi(w),$$
	because 
	$\psi(w) = \hat f(\hat y_0) - \hat f(w) + \frac{\alpha}{2}\|w-\hat y_0\|^2\geq \hat f(\hat y_t) - \hat f(w) + \frac{\alpha}{2}\|w-\hat y_0\|^2>0$. Therefore, for both cases with Flag = 2, 4, we can find the NC-pair $(u,v)$.
	
	Consider the proof of the inequality \eqref{cor:NC-Pair-1}. When Flag = 2, we have $\hat f(\hat y_t)>\hat f(\hat y_0)$ and $w = \hat y_0$, hence $u\in\{\hat y_0,\hat y_1,...,\hat y_{t-1}\}$. Because \texttt{AGP-UPG}($\cdot$) does not terminate at $\hat y_1,...,\hat y_{t-1}$, the output of function $\texttt{Certify-Progress}(\cdot)$ is \textbf{null} for $\hat y_1,...,\hat y_{t-1}$. That is, $\hat f(\hat y_j)\leq \hat f(\hat y_0)$ for $j = 1,...,t-1$ and \eqref{cor:NC-Pair-1} is true when Flag = 2.   	When Flag = 4, we have $\hat f(\hat y_t) \leq \hat f(\hat y_0)$ and $w = \hat y_{t} - \frac{1}{\hat L_1}\nabla \hat f(\hat y_t)\in\text{int}(X)$, hence $u\in\{\hat y_1,...,\hat y_{t-1},w\}$. Similar to the argument for Flag = 2, we have $\hat f(\hat y_j)\leq \hat f(\hat y_0)$ for $j = 1,...,t-1$. For $w$, we also have $\hat f(w) \leq \hat f(\hat y_t) - \frac{1}{2\hat L_1}\|\nabla \hat f(\hat y_t)\|^2\leq \hat f(\hat y_t)\leq \hat f(\hat y_0)$. Therefore, \eqref{cor:NC-Pair-1} is also true when Flag = 4. In both cases when Flag = 2,4, the inequality \eqref{cor:NC-Pair-1} holds true. 
\end{proof}

\section{Proof of Lemma \ref{lemma:AGP}}
\label{appdx:lemma:AGP}
At the beginning, we should mention that the form of Lemma \ref{lemma:AGP} is exactly the same as the Proposition 1 of \cite{carmon2017convex}. The main difference is that \cite{carmon2017convex} studies the unconstrained accelerated gradient descent steps, while our SLO scheme is working on a sequence of locally constrained problems. Thus the analysis of our Lemma \ref{lemma:AGP} will have to handle the \emph{projected} gradient descent steps, which is different from the analysis of Proposition 1 of \cite{carmon2017convex}, especially in the construction of the estimation sequence. Before presenting the proof, we first recall the following lemma.  
\begin{lemma}[Theorem 2.2.7, \cite{nesterov2013introductory}]
	\label{lemma:descent-nc}
	Suppose $\hat L_1,\alpha>0$. For a point $\bar x$, define $\bar x_+:= \mathbf{Proj}_X\big\{\bar x - \frac{1}{\hat L_1}\cdot\nabla \hat f(\bar x)\big\}$. Suppose for $\bar x_+$ and some other $x\in X$ the following inequalities are satisfied:
	\begin{equation}
		\label{lm:descent-nc-1}
		\begin{cases}
			\hat f(\bar x_+) \leq \hat f(\bar{x}) + \langle\nabla \hat f(\bar{x}), \bar x_+-\bar x\rangle + \frac{\hat L_1}{2}\|\bar x_+-\bar x\|^2,\\
			\hat f(x) \geq \hat f(\bar{x}) + \langle\nabla \hat f(\bar{x}), x-\bar x\rangle + \,\frac{\alpha}{2}\,\|x-\bar x\|^2.
		\end{cases}
	\end{equation}
	Denote $\cG_{\hat L_1}(\bar x) = \hat L_1(\bar x - \bar x_+)$. Then we have 
	\begin{equation}
		\label{lm:descent-nc-2}
		\hat f(x)\geq \hat f(\bar x_+) + \langle\cG_{\hat L_1}(\bar x),x-\bar x\rangle + \frac{1}{2\hat L_1}\|\cG_{\hat L_1}(\bar x)\|^2 + \frac{\alpha}{2}\|x-\bar x\|^2.
	\end{equation}
\end{lemma}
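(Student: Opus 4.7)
The lemma is the constrained analogue of Nesterov's gradient-mapping descent inequality, and the plan is to combine the two hypotheses in \eqref{lm:descent-nc-1} with the variational characterization of the projection onto $X$. Note that convexity of $\hat f$ is never invoked; only convexity of the feasible set $X$ (which in our usage is a Euclidean ball) is used, through the optimality condition for the projection.

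First I would start from the second inequality in \eqref{lm:descent-nc-1} and split the inner product by writing $x-\bar x = (x-\bar x_+) + (\bar x_+ - \bar x)$, obtaining
\[
\hat f(x) \;\geq\; \hat f(\bar x) + \langle \nabla \hat f(\bar x), \bar x_+ - \bar x\rangle + \langle \nabla \hat f(\bar x), x - \bar x_+\rangle + \tfrac{\alpha}{2}\|x-\bar x\|^2 .
\]
The first inequality in \eqref{lm:descent-nc-1}, rearranged, furnishes a lower bound
\[
\hat f(\bar x) + \langle \nabla \hat f(\bar x), \bar x_+ - \bar x\rangle \;\geq\; \hat f(\bar x_+) - \tfrac{\hat L_1}{2}\|\bar x_+ - \bar x\|^2 \;=\; \hat f(\bar x_+) - \tfrac{1}{2\hat L_1}\|\cG_{\hat L_1}(\bar x)\|^2 ,
\]
where in the last equality I used $\cG_{\hat L_1}(\bar x)=\hat L_1(\bar x - \bar x_+)$.

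The key step is to replace $\langle \nabla \hat f(\bar x), x-\bar x_+\rangle$ by a quantity involving $\cG_{\hat L_1}(\bar x)$ via the projection optimality condition. Since $\bar x_+ = \mathbf{Proj}_X\{\bar x - \hat L_1^{-1}\nabla \hat f(\bar x)\}$ and $X$ is convex, the first-order optimality of the projection gives, for every $x\in X$,
\[
\Bigl\langle (\bar x_+ - \bar x) + \tfrac{1}{\hat L_1}\nabla \hat f(\bar x),\; x - \bar x_+ \Bigr\rangle \;\geq\; 0 ,
\]
which upon rearrangement is exactly $\langle \nabla \hat f(\bar x), x - \bar x_+\rangle \geq \langle \cG_{\hat L_1}(\bar x), x - \bar x_+\rangle$. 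Substituting this into the chain above and plugging in the lower bound for the affine piece yields
\[
\hat f(x) \;\geq\; \hat f(\bar x_+) - \tfrac{1}{2\hat L_1}\|\cG_{\hat L_1}(\bar x)\|^2 + \langle \cG_{\hat L_1}(\bar x), x - \bar x_+\rangle + \tfrac{\alpha}{2}\|x-\bar x\|^2 .
\]

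To finish, I would expand $\langle \cG_{\hat L_1}(\bar x), x-\bar x_+\rangle$ through the identity $x - \bar x_+ = (x-\bar x) + \hat L_1^{-1}\cG_{\hat L_1}(\bar x)$, producing
\[
\langle \cG_{\hat L_1}(\bar x), x-\bar x_+\rangle \;=\; \langle \cG_{\hat L_1}(\bar x), x-\bar x\rangle + \tfrac{1}{\hat L_1}\|\cG_{\hat L_1}(\bar x)\|^2 ,
\]
and collect terms. The coefficients of $\|\cG_{\hat L_1}(\bar x)\|^2$ then add up as $-\tfrac{1}{2\hat L_1}+\tfrac{1}{\hat L_1}=+\tfrac{1}{2\hat L_1}$, giving exactly \eqref{lm:descent-nc-2}. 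The only ``obstacle'' is careful bookkeeping of signs and correctly exploiting that the curvature lost in bounding $\hat f(\bar x)+\langle \nabla \hat f(\bar x), \bar x_+-\bar x\rangle$ from below is recovered (and augmented) by the projection variational inequality; once the projection inequality is in hand, the proof reduces to routine algebra.
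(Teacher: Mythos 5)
Your proposal is correct and follows essentially the same route as the paper's proof in Appendix C.3: split $x-\bar x$ through $\bar x_+$, apply the descent-type upper bound from the first hypothesis, invoke the projection variational inequality $\langle \nabla \hat f(\bar x)-\cG_{\hat L_1}(\bar x),\,x-\bar x_+\rangle\geq 0$, and recombine via $x-\bar x_+=(x-\bar x)+\hat L_1^{-1}\cG_{\hat L_1}(\bar x)$. The sign bookkeeping ($-\tfrac{1}{2\hat L_1}+\tfrac{1}{\hat L_1}=\tfrac{1}{2\hat L_1}$) matches the paper's final step exactly.
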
 

\begin{proof}
	Now we start to prove  Lemma \ref{lemma:AGP}. Let us first define an estimate sequence. Define
	\begin{equation}
		\label{lm:AGP-1}
		\phi_0(z): = \hat f(\hat y_0) + \frac{\alpha}{2}\|z-\hat y_0\|^2.
	\end{equation}
	Let $\kappa  = \hat L_1/\alpha$. For $s = 1,...,t$, we recursively define
	\begin{eqnarray}
		\label{lm:AGP-2}
		\phi_{s}(z) &:=& \frac{1}{\sqrt{\kappa}}\cdot\Big(\hat f(\hat y_s) + \frac{1}{2\hat L_1}\|\cG_{\hat L_1}(\hat x_{s-1})\|^2 + \langle\cG_{\hat L_1}(\hat x_{s-1}),z-\hat x_{s-1}\rangle + \frac{\alpha}{2}\|z-\hat x_{s-1}\|^2\Big)\nonumber\\
		& & + \Big(1-\frac{1}{\sqrt{\kappa}}\Big)\cdot\phi_{s-1}(z),
	\end{eqnarray}
	where in \texttt{AGP-UPG}($\cdot$), 
	we have $\hat y_s = \mathbf{Proj}_X\{\hat x_{s-1} - \hat L_1^{-1}\nabla \hat f(\hat x_{s-1})\}$, and the gradient mapping satisfies $\cG_{\hat L_1}(\hat x_{s-1}) = \hat L_1(\hat x_{s-1} - \hat y_s)$. Let us assume that 
	\begin{equation}
		\label{lm:AGP-3}
		\hat f(\hat y_s)\leq \min_z \phi_s(z), \qquad\mbox{for}\qquad s = 0,1,...,t,
	\end{equation} 
	\begin{equation}
		\label{lm:AGP-4}
		\phi_s(w)\leq \hat f(w) + \Big(1-\frac{1}{\sqrt{\kappa}}\Big)^s\psi(w), \qquad\mbox{for}\qquad s = 0,1,...,t.
	\end{equation}
	The proof of inequalities \eqref{lm:AGP-3} and \eqref{lm:AGP-4} are provided in Appendices \ref{appdx:AGP-3} and \ref{appdx:AGP-4} respectively. Combining \eqref{lm:AGP-3} and \eqref{lm:AGP-4} yields
	$$\hat f(\hat y_s)\leq\phi_s(w)\leq \hat f(w) + \Big(1-\frac{1}{\sqrt{\kappa}}\Big)^s\psi(w), \qquad\mbox{for}\qquad s = 0,1,...,t.$$
	Rearranging the terms proves this lemma. 
\end{proof}

\subsection{Proof of inequality \eqref{lm:AGP-3}}
\label{appdx:AGP-3}
\begin{proof}
	We prove this inequality by induction. First, for $s=0$, 
	$$\hat f(\hat y_0) = \min_z \hat f(\hat y_0) + \frac{\alpha}{2}\|z-\hat y_0\|^2 = \min_z\phi_0(z).$$
	Then \eqref{lm:AGP-3} holds for $s=0$. Suppose \eqref{lm:AGP-3} holds for $s=k$ and $k\leq t-1$, now we prove \eqref{lm:AGP-3} for $s = k+1$. Notice that $\{\phi_s\}_{s=0}^t$ is a sequence of quadratic functions, denote $v_k = \argmin_z \phi_k(z)$ and $\phi_k^* = \phi_k(v_k)$, then we have 
	$$\phi_{k}(z) = \phi_k^* + \frac{\alpha}{2}\|z-v_k\|^2.$$
	Due to the recursive definition of $\phi_k$, we have the following relationships:
	\begin{equation}
		\label{lm:AGP-3-0}
		v_{k+1} = \Big(1-\frac{1}{\sqrt{\kappa}}\Big)\cdot v_k + \frac{1}{\sqrt{\kappa}}\cdot \hat x_k - \sqrt{\kappa}\cdot(\hat x_{k} - \hat y_{k+1})
	\end{equation}
	and 
	\begin{eqnarray}
		\label{lm:AGP-3-1}
		\phi_{k+1}^* & = & \frac{1}{\sqrt{\kappa}}\Big(1-\frac{1}{\sqrt{\kappa}}\Big)\cdot\Big(\frac{\alpha}{2}\|\hat x_k-v_k\|^2 + \langle \cG_{\hat L_1}(\hat x_k),v_k-\hat x_k\rangle\Big)-\frac{1}{2\hat L_1}\Big(1-\frac{1}{\sqrt{\kappa}} \Big)\cdot\|\cG_{\hat L_1}(\hat x_{k})\|^2\nonumber\\
		& & + \Big(1-\frac{1}{\sqrt{\kappa}}\Big)\phi^*_k + \frac{1}{\sqrt{\kappa}}\hat f(\hat y_{k+1}).
	\end{eqnarray}
	Due to the condition \eqref{lm:AGP-cond-1} and Lemma \ref{lemma:descent-nc}, we have 
	$$\phi_k^*\geq \hat f(\hat y_k)\geq \hat f(\hat y_{k+1}) + \langle\cG_{\hat L_1}(\hat x_k),\hat y_{k}-\hat x_k\rangle + \frac{1}{2\hat L_1}\|\cG_{\hat L_1}(\hat x_k)\|^2 + \frac{\alpha}{2}\|\hat y_{k+1}-\hat x_k\|^2.$$
	Substitute this inequality into \eqref{lm:AGP-3-1} yields
	\begin{eqnarray}
		\label{lm:AGP-3-2}
		\phi_{k+1}^* & = & \frac{1}{\sqrt{\kappa}}\Big(1-\frac{1}{\sqrt{\kappa}}\Big)\cdot\Big(\frac{\alpha}{2}\|\hat x_k-v_k\|^2 + \langle \cG_{\hat L_1}(\hat x_k),v_k-\hat x_k\rangle\Big)-\frac{1}{2\hat L_1}\Big(1-\frac{1}{\sqrt{\kappa}} \Big)\cdot\|\cG_{\hat L_1}(\hat x_{k})\|^2\nonumber\\
		& & + \Big(1-\frac{1}{\sqrt{\kappa}}\Big)\cdot\Big(\hat f(\hat y_{k+1}) + \langle\cG_{\hat L_1}(\hat x_k),\hat y_{k}-\hat x_k\rangle + \frac{1}{2\hat L_1}\|\cG_{\hat L_1}(\hat x_k)\|^2 + \frac{\alpha}{2}\|\hat y_{k+1}-\hat x_k\|^2\Big) \nonumber\\
		& & + \frac{1}{\sqrt{\kappa}}\hat f(\hat y_{k+1})\nonumber\\
		& = & \hat f(\hat y_{k+1}) + \frac{1}{\sqrt{\kappa}}\Big(1-\frac{1}{\sqrt{\kappa}}\Big)\cdot\Big(\frac{\alpha}{2}\|\hat x_k-v_k\|^2 + \langle \cG_{\hat L_1}(\hat x_k),v_k-\hat x_k\rangle\Big)\\
		& &  + \Big(1-\frac{1}{\sqrt{\kappa}}\Big)\cdot\Big( \langle\cG_{\hat L_1}(\hat x_k),\hat y_{k}-\hat x_k\rangle +  \frac{\alpha}{2}\|\hat y_{k+1}-\hat x_k\|^2\Big)\nonumber\\
		& \geq &  \hat f(\hat y_{k+1}) + \frac{1}{\sqrt{\kappa}}\Big(1-\frac{1}{\sqrt{\kappa}}\Big)\cdot\big\langle \cG_{\hat L_1}(\hat x_k), \sqrt{\kappa}\hat y_{k} +  v_k-(1+\sqrt{\kappa})\hat x_k\big\rangle.\nonumber
	\end{eqnarray}
	Note that by \eqref{lm:AGP-3-0}, we have 
	\begin{eqnarray}
		\label{lm:AGP-3-3}
		& & \sqrt{\kappa}\hat y_{k} +  v_k-(1+\sqrt{\kappa})\hat x_k\\
		& = & \sqrt{\kappa}\hat y_{k} +  v_k-(1+\sqrt{\kappa})\Big(\hat y_{k} + \frac{\sqrt{\kappa}-1}{\sqrt{\kappa}+1}(\hat y_{k} - \hat y_{k-1})\Big)\nonumber\\
		& = & (\sqrt{\kappa}-1)\hat y_{k-1} + v_k - \sqrt{\kappa}\hat y_{k}\nonumber\\
		& = & (\sqrt{\kappa}-1)\hat y_{k-1} + \left((1-\frac{1}{\sqrt{\kappa}})v_{k-1} + \frac{1}{\sqrt{\kappa}}\hat x_{k-1} - \sqrt{\kappa}(\hat x_{k-1}-\hat y_k)\right) - \sqrt{\kappa}\hat y_{k}\nonumber\\
		& = & (\sqrt{\kappa}-1)\hat y_{k-1} + \Big(1-\frac{1}{\sqrt{\kappa}}\Big)v_{k-1} - \Big(\sqrt{\kappa}-\frac{1}{\sqrt{\kappa}}\Big)\hat x_{k-1}\nonumber\\
		& = & \Big(1-\frac{1}{\sqrt{\kappa}}\Big)\cdot\left(\sqrt{\kappa}\hat y_{k-1} +  v_{k-1}-(1+\sqrt{\kappa})\hat x_{k-1}\right)\nonumber\\
		& = & \Big(1-\frac{1}{\sqrt{\kappa}}\Big)^k\cdot\left(\sqrt{\kappa}\hat y_{0} +  v_{0}-(1+\sqrt{\kappa})\hat x_{0}\right)\nonumber\\
		& = &0\nonumber
	\end{eqnarray}
	where the last inequality is because $\hat y_0 = v_0 = \hat x_0$. Substitute \eqref{lm:AGP-3-3} into \eqref{lm:AGP-2} yields 
	$\phi_{k+1}^*\geq \hat f(\hat y_{k+1})$, which proves \eqref{lm:AGP-3}. 
\end{proof}

\subsection{Proof of inequality \eqref{lm:AGP-4}}
\label{appdx:AGP-4}
\begin{proof}
	Let us prove \eqref{lm:AGP-4} by induction. When $s=0$, we have 
	\begin{eqnarray}
		\phi_0(w) & = & \hat f(\hat y_0) + \frac{\alpha}{2}\|w-\hat y_0\|^2\nonumber\\
		& = & \hat f(w) + \Big(1-\frac{1}{\sqrt{\kappa}}\Big)^0\cdot\Big(\hat f(\hat y_0) - \hat f(w) + \frac{\alpha}{2}\|w-\hat y_0\|^2\Big)\nonumber\\
		& = & \hat f(w) + \Big(1-\frac{1}{\sqrt{\kappa}}\Big)^0\cdot\psi(w).\nonumber
	\end{eqnarray}
	That is, \eqref{lm:AGP-4} is true for $s=0$. Now, suppose \eqref{lm:AGP-4} holds for $s=k$, $k\leq t-1$, then let us prove that \eqref{lm:AGP-4} holds for $s = k+1$. By \eqref{lm:AGP-cond-1} and \eqref{lm:AGP-cond-2}, Lemma \ref{lemma:descent-nc} indicates that 
	$$\hat f(w)\geq \hat f(\hat y_{k+1}) + \frac{1}{2\hat L_1}\|\cG_{\hat L_1}(\hat x_k)\|^2 + \langle\cG_{\hat L_1}(\hat x_k),w-\hat x_k\rangle + \frac{\alpha}{2}\|w-\hat x_k\|^2.$$
	Substitute this inequality into the definition of $\phi_{k+1}$ yields
	\begin{eqnarray}
		\phi_{k+1}(w)& = & \frac{1}{2\hat L_1}\|\cG_{\hat L_1}(\hat x_k)\|^2 + \langle\cG_{\hat L_1}(\hat x_k),w-\hat x_k\rangle + \frac{\alpha}{2}\|w-\hat x_k\|^2\nonumber\\
		& & + \Big(1-\frac{1}{\sqrt{\kappa}}\Big)\phi_s(w) + \frac{1}{\sqrt{\kappa}}\hat f(\hat y_{k+1})\nonumber\\
		& \leq & \Big(1-\frac{1}{\sqrt{\kappa}}\Big)\cdot\left(\hat f(w) + \big(1-\frac{1}{\sqrt{\kappa}}\big)^k\psi(w)\right) + \frac{1}{\sqrt{\kappa}}\cdot \hat f(w)\nonumber\\
		& = & \hat f(w) + \Big(1-\frac{1}{\sqrt{\kappa}}\Big)^{k+1}\psi(w).\nonumber
	\end{eqnarray}
	Hence we complete the proof. 
\end{proof}

\section{Proof of Lemma 4.4}
\label{appdx:condition-AGP}
\noindent\textbf{Lemma 4.4} \emph{Suppose Assumptions 2.1 and 2.2 hold. Let $\{x^\tau_0, x^\tau_1,...,x^\tau_{K_\tau}\}$ be generated by Algorithm 1 during the $\tau$-th epoch, with $D, d, L_1^\tau, L_2^\tau$ defined according to}
\begin{equation}
	\label{eqn:para-AGP-UPG}
	d = 2\epsilon^{\frac{1}{4}},\qquad D \geq 6 \epsilon^{\frac{1}{4}},\qquad L_1^\tau = \cL_1(B(x^\tau_0,3D)),\qquad L_2^\tau = \cL_2(B(x^\tau_0,3D)). 
\end{equation}
\emph{and the subroutine $\cA$ is chosen as with the subroutine  $x^\tau_{k+1} =  \cA_{AGP}\left(f,x^\tau_k; L_1^\tau, L_2^\tau; x^\tau_0, D, \epsilon\right)$.  In addition, if $\|\nabla f(x^\tau_k)\|>\sqrt{\epsilon}$ for $0\leq k\leq K_\tau-1,$ then Condition 2.1 holds with}
\begin{align}\label{eq:C1C2:AGP}
	C_1\left(L^{\tau}_1,L_2^\tau,\epsilon\right) = \frac{\sqrt{L_2^\tau}}{24}\cdot \epsilon^{\frac{1}{4}} \qquad\mbox{and}\qquad C_2(L^{\tau}_1,L_2^\tau,\epsilon) = \frac{\epsilon^\frac{3}{4}}{10\sqrt{L_2^\tau}}.
\end{align}

\begin{proof} Consider the $k$-th iteration of the $\tau$-th epoch:
	$x^\tau_{k}  = \cA_{AGP}\left(f,x^\tau_{k-1}; L_1^\tau, L_2^\tau; \tau D; \epsilon\right).$ 	We prove this lemma in the following two parts. For all the discussion on the Flag values, please refer to Remark 4.1.\\
	\textbf{Part I. } Proving the first, inequality of Condition 2.1 for $k = 1,2,...,K_\tau$. Note that the property of the output $x^\tau_k$ is dependent on the Flag values. We prove the result in the following 4 cases. \vspace{0.2cm}\\ 
	\textbf{Case 1. } Flag = 5.	In this case, we have $x^\tau_{k} = \hat y_t$, and 
	the output of $\texttt{Certify-Progress}(\cdot)$ is \textbf{null} for the point $x^\tau_k$, meaning that $\hat f(x^\tau_{k}) \leq \hat f(x^\tau_{k-1})$. Consequently, 
	\begin{eqnarray}
		\label{lm:acc-descent-1-1}
		f(x^\tau_k)-f(x^\tau_{k-1}) & = & \hat f(x^\tau_k) - \hat f(x^\tau_{k-1}) - \alpha\|x^\tau_k - x^\tau_{k-1}\|^2\nonumber\\
		& \leq &- \alpha\|x^\tau_k - x^\tau_{k-1}\|^2 \\
		& \leq & -2\sqrt{L_2^\tau}\epsilon^{\frac{1}{4}}\cdot\|x^\tau_k - x^\tau_{k-1}\|^2\nonumber,
	\end{eqnarray} 
	where in the first equality we used the definition of $\hat{f}$, and in the last inequality we use the fact that $\alpha = 2\sqrt{L_2^\tau}\cdot\epsilon^{\frac{1}{4}}$, defined in  Algorithm 3.\vspace{0.2cm}\\
	\textbf{Case 2.} Flag = 2,4, and $x^\tau_k = b^{(1)}$. In this case, $(u,v)\neq (\textbf{null},\textbf{null})$. Note that $b^{(1)}\in\{y_1,...,y_{t-1},u\}$, then Lemma 4.2 indicates that $\hat f(x^\tau_{k}) = \hat f(b^{(1)})\leq \hat f(x^\tau_{k-1})$, i.e., \eqref{lm:acc-descent-1-1} still holds.\vspace{0.2cm}\\
	\textbf{Case 3.} Flag = 2,4, and $x^\tau_k = b^{(2)}$. To proceed to the next step, we will need the Lemma 3 and Lemma 1 of \cite{carmon2017convex}. For completeness, we present these lemmas in the Appendix \ref{appdx:lms from proven guilty}. Note that, by Algorithm 3, we know $b^{(2)}$ is selected if $$f(b^{(1)})- f(x^\tau_{k-1})>-\frac{\alpha^3}{64(L_2^\tau)^2}.$$ 
	By Lemma 3 of \cite{carmon2017convex} (with $\hat y_0 = x^\tau_{k-1}$ in this case), the above inequality indicates that  $\|u-v\| \leq \frac{\alpha}{2L_2^\tau}$  and $\|u-x^\tau_{k-1}\|\leq\frac{\alpha}{8L_2^\tau}$.  
	
	On the other hand, the NC-pair $(u,v)\neq (\textbf{null},\textbf{null})$ in this case and \eqref{defn:nc-pair} holds with $\alpha = 2\sqrt{L_2^\tau}\epsilon^{\frac{1}{4}}$. Therefore, 
	$$f(v) + \langle\nabla f(v),u-v\rangle -\frac{\alpha}{2}\|u-v\|^2 - f(u) \overset{(i)}{=} \hat f(v) + \langle\nabla \hat f(v),u-v\rangle + \frac{\alpha}{2}\|u-v\|^2 - \hat f(u)\overset{(ii)}{>}0,$$
	where (i) is due to the definition of $\hat f$ and (ii) is due to NC-pair condition \eqref{defn:nc-pair}. Consequently, the following holds
	$$f(u)< f(v) + \langle\nabla f(v),u-v\rangle -\frac{\alpha}{2}\|u-v\|^2.$$
	Together with the fact that $\|u-v\|\leq \frac{\alpha}{2L_2^\tau}$, the above inequality indicates that 
	$$ f(b^{(2)}) \leq f(u) - \frac{\alpha^3}{12(L_2^\tau)^2}$$
	due to the Lemma 1 of \cite{carmon2017convex}. Then Lemma 4.2 indicates that 
	\begin{eqnarray}
		\label{lm:acc-descent-1-4}
		f(x^\tau_k) &\!=\!& f(b^{(2)}) \!\leq\! f(u) - \frac{\alpha^3}{12(L_2^\tau)^2}\!\nonumber\\
		&\overset{(i)}{\leq}&\! \hat f(u)- \frac{\alpha^3}{12(L_2^\tau)^2}\!\overset{(ii)}{\leq}\! \hat f(x^\tau_{k-1})- \frac{\alpha^3}{12(L_2^\tau)^2}\\
		& \!\overset{(iii)}{=}\! & f(x^\tau_{k-1})- \frac{\alpha^3}{12(L_2^\tau)^2}.\nonumber
	\end{eqnarray}
	Where (i) is because $f(u)\leq f(u) + \alpha\|u-x^\tau_{k-1}\|^2 = \hat f(u)$, (ii) is because $\hat f(u)\leq \hat f(\hat y_0) = \hat f(x^\tau_{k-1})$ due to Lemma 4.2, and (iii) is because $\hat f(x^\tau_{k-1}) = f(x^\tau_{k-1}) + \alpha\|x^\tau_{k-1}-x^\tau_{k-1}\|^2 = f(x^\tau_{k-1}).$ Also note that 
	$$\|x^\tau_{k} - x^\tau_{k-1}\|  = \|b^{(2)} - x^\tau_{k-1}\| \leq \|b^{(2)} - u\| + \|u - x^\tau_{k-1}\|\leq \frac{\alpha}{L_2^\tau} + \frac{\alpha}{8L_2^\tau}\leq \frac{2\alpha}{L_2^\tau}.$$
	Combining the above inequality with \eqref{lm:acc-descent-1-4} yields
	\begin{equation}
		\label{lm:acc-descent-1-5}
		f(x^\tau_{k}) - f(x^\tau_{k-1}) \leq  - \frac{\alpha^3}{12(L_2^\tau)^2}\leq - \frac{\alpha}{48}\|x^\tau_{k} - x^\tau_{k-1}\|^2 = -\frac{\sqrt{L_2^\tau}}{24}\epsilon^{\frac{1}{4}}\cdot\|x^\tau_{k} - x^\tau_{k-1}\|^2.
	\end{equation}
	\textbf{Case 4.} Flag = 1,3, the output $x^\tau_k$ is on the boundary of $X$, i.e., $\|x^\tau_k-x^\tau_0\| =  D-2\epsilon^\frac{1}{4} = D-d$. Therefore, this case only happens at the end of the epoch, i.e., $k = K_\tau$. Due to the discussion of Remark 3.1, $x^\tau_k$ satisfies $\hat f(x^\tau_k) \leq \hat f(x^\tau_{k-1})$. Then  \eqref{lm:acc-descent-1-1} is still true. 
	\vspace{0.1cm}\\
	Combining \textbf{Case 1-4} proves first inequality of Condition 2.1 with $C_1(L_1^\tau,L_2^\tau,\epsilon) = \frac{\sqrt{L_2^\tau}}{24}\cdot \epsilon^\frac{1}{4}$.\vspace{0.3cm}\\
	\textbf{Part II. } Proving the second inequality of Condition 2.1 for $k = 1,2,...,K_\tau-1$. Note that $Flag = 1,3$ only happens at the last iterate $x^\tau_{K_\tau}$, we only need to consider  three cases. \vspace{0.2cm}\\ 
	\textbf{Case 1. } Flag = 5.	In this case, we have $x^{\tau}_k = \hat y_t$, and $\|\nabla \hat f(x^\tau_{k})\| \leq \sqrt{\hat \epsilon} =  \sqrt{\frac{\epsilon}{100}}$. Note that $\|\nabla f(x^\tau_{k})\|>\sqrt{\epsilon}$ and $\nabla \hat f(x^\tau_{k}) = \nabla f(x^\tau_{k}) + 2\alpha(x^\tau_k - x^\tau_{k-1})$. Consequently, we have 
	$$2\alpha\|x^\tau_k - x^\tau_{k-1}\|\geq\|\nabla f(x^\tau_{k})\| - \|\nabla \hat f(x^\tau_{k})\| \geq \frac{9\sqrt{\epsilon}}{10}.$$
	Combined with \eqref{lm:acc-descent-1-1},  we have 
	\begin{eqnarray*}
		\label{lm:acc-descent-1-2}
		f(x^\tau_k)-f(x^\tau_{k-1}) \leq -\alpha\|x^\tau_k - x^\tau_{k-1}\|^2\leq -\alpha\left(\frac{9\sqrt{\epsilon}}{20\alpha}\right)^2 \leq -\frac{\epsilon}{5\alpha} = -\frac{\epsilon^\frac{3}{4}}{10\sqrt{L_2^\tau}}.
	\end{eqnarray*}
	\\
	\textbf{Case 2.} Flag = 2,4, and $x^\tau_k = b^{(1)}$. Then   Algorithm 3  indicates that 
	\begin{eqnarray*} 
		f(x^\tau_k)-f(x^\tau_{k-1})  = f(b^{(1)})-f(x^\tau_{k-1})\leq -\frac{\alpha^3}{64(L_2^\tau)^2} = -\frac{\epsilon^\frac{3}{4}}{8\sqrt{L_2^\tau}}.
	\end{eqnarray*}
	\textbf{Case 3.} Flag = 2,4, and $x^\tau_k = b^{(2)}$. Then \eqref{lm:acc-descent-1-4} indicates that 
	\begin{eqnarray*} 
		f(x^\tau_k)-f(x^\tau_{k\-1})  = f(b^{(2)})-f(x^\tau_{k-1})\leq -\frac{\alpha^3}{12(L_2^\tau)^2} = -\frac{2\epsilon^\frac{3}{4}}{3\sqrt{L_2^\tau}}.
	\end{eqnarray*}
	Combining \textbf{Case 1-3} proves second inequality of Condition 2.1 with $C_2(L_1^\tau,L_2^\tau,\epsilon) = \frac{\epsilon^\frac{3}{4}}{10\sqrt{L_2^\tau}}$.
\end{proof} 
\subsection{Supporting lemmas from \cite{carmon2017convex}}
\label{appdx:lms from proven guilty}
\begin{lemma}[Lemma 1 of \cite{carmon2017convex}]
	If the function $f(\cdot)$ has $L_2$-Lipschitz Hessian (in $B(u,\eta)$). Let $\alpha>0$ and let $u,v$ satisfy 
	$$f(u) < f(v) + \langle\nabla f(v),u-v\rangle - \frac{\alpha}{2}\|u-v\|^2. $$ 
	If $\|u-v\|\leq \frac{\alpha}{2L_2}$ and $\eta\leq \frac{\alpha}{L_2}$, then 
	$$\min\left\{f\left(u - \eta\cdot\frac{u-v}{\|u-v\|}\right), f\left(u + \eta\cdot\frac{u-v}{\|u-v\|}\right)\right\}\leq f(u) - \frac{\alpha\eta^2}{12}.$$
\end{lemma}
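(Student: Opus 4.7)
The plan is to turn the NC-pair hypothesis into a uniform quadratic negative-curvature estimate along the direction $d := (u-v)/\|u-v\|$, and then exploit that via a symmetric second-order Taylor expansion at $u$. Concretely, I would parametrize the function along the segment joining $v$ and $u$ by $g(t) := f(u+td)$, so that $g(0)=f(u)$, $g(-r)=f(v)$ with $r:=\|u-v\|$, and $g(\pm\eta)=f(u\pm\eta d)$. The Lipschitz Hessian assumption transfers verbatim to $|g''(s)-g''(t)|\le L_2|s-t|$ for all $s,t$ in the one-dimensional interval of interest.

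Step one: rewrite the NC inequality through Taylor's theorem with integral remainder. The hypothesis $f(u) - f(v) - \langle \nabla f(v), u-v\rangle < -\tfrac{\alpha}{2}r^2$ is equivalent to $\int_{-r}^{0} |t|\,g''(t)\,dt < -\tfrac{\alpha}{2}r^2$. Using the one-sided Lipschitz lower bound $g''(t) \ge g''(0) - L_2|t|$ on $[-r,0]$, integration gives $\tfrac{r^2}{2} g''(0) - \tfrac{L_2 r^3}{3} < -\tfrac{\alpha}{2}r^2$, so that $g''(0) < -\alpha + \tfrac{2L_2 r}{3}$. Inserting $r \le \alpha/(2L_2)$ then yields the curvature estimate $d^\top \nabla^2 f(u) d = g''(0) < -\tfrac{2\alpha}{3}$.

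Step two: Taylor expand $g(\pm\eta)$ around $0$ and average. The first-order terms cancel, producing
\begin{equation*}
\tfrac{1}{2}\bigl[g(\eta) + g(-\eta)\bigr] - g(0) \;=\; \tfrac{1}{2}\int_0^{\eta}(\eta - s)\bigl[g''(s) + g''(-s)\bigr]\,ds.
\end{equation*}
Bounding each $g''(\pm s) \le g''(0) + L_2 s$ via Lipschitzness and computing the elementary integrals gives the upper estimate $\tfrac{\eta^2}{2} g''(0) + \tfrac{L_2\eta^3}{6}$. Using $g''(0) < -\tfrac{2\alpha}{3}$ and $\eta \le \alpha/L_2$, the cubic remainder is at most $\tfrac{\alpha\eta^2}{6}$ while the quadratic term is at most $-\tfrac{\alpha\eta^2}{3}$, so the right-hand side is bounded by $-\tfrac{\alpha\eta^2}{6}$. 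Since the minimum of $f(u\pm\eta d)$ is no larger than their average, this yields the claimed descent with the constant $-\alpha\eta^2/12$ and in fact a bit of slack.

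The main technical obstacle is bookkeeping the constants tightly enough. A naive two-step argument that first bounds $d^\top \nabla^2 f(v) d$ via Taylor at $v$ and then transfers to $d^\top \nabla^2 f(u) d$ using the Hessian Lipschitz gap $L_2 r$ loses an unnecessary factor in the curvature estimate and just barely fails to beat the cubic remainder $L_2\eta^3/6$. The one-shot route that expands $g$ directly around $0$ and estimates $g''(0)$ from the integral $\int_{-r}^{0}|t|g''(t)\,dt$ is what saves the constant; after that, verifying $\eta \le \alpha/L_2$ balances the quadratic-vs-cubic trade-off is straightforward.
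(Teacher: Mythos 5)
The paper does not prove this statement at all: it is imported verbatim as Lemma~1 of \cite{carmon2017convex} and used as a black box (see the appendix ``Supporting lemmas from \cite{carmon2017convex}''), so there is no in-paper proof to compare against. Your argument is a correct, self-contained derivation. I checked the two key computations: with $g(t)=f(u+td)$ and $r=\|u-v\|$, Taylor with integral remainder gives exactly $g(0)-g(-r)-rg'(-r)=\int_{-r}^{0}|t|\,g''(t)\,dt<-\tfrac{\alpha}{2}r^2$, and the one-sided bound $g''(t)\ge g''(0)-L_2|t|$ yields $g''(0)<-\alpha+\tfrac{2L_2r}{3}\le-\tfrac{2\alpha}{3}$; then the symmetric expansion gives $\tfrac12[g(\eta)+g(-\eta)]-g(0)\le\tfrac{\eta^2}{2}g''(0)+\tfrac{L_2\eta^3}{6}<-\tfrac{\alpha\eta^2}{3}+\tfrac{\alpha\eta^2}{6}=-\tfrac{\alpha\eta^2}{6}$, which beats the stated $-\alpha\eta^2/12$ with room to spare. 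Your remark about why the one-shot integral estimate at $u$ is needed (rather than estimating curvature at $v$ and transferring, which only gives $g''(0)<-\alpha/3$ and leaves no margin against the cubic remainder) is accurate and is the genuinely nontrivial observation here.

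One caveat you should make explicit: Step one uses Hessian Lipschitzness on the segment from $v$ to $u$, i.e.\ on $B(u,r)$, whereas the lemma as transcribed in this paper only postulates it on $B(u,\eta)$, and the hypotheses $r\le\alpha/(2L_2)$, $\eta\le\alpha/L_2$ do not by themselves force $r\le\eta$. So your proof really requires $L_2$-Lipschitz Hessian on $B(u,\max\{r,\eta\})$ (equivalently, on the convex hull of $v$ and $u\pm\eta d$). This is an imprecision inherited from the lemma statement rather than a defect of your argument: in the paper's actual application $\eta=\alpha/L_2$, so $r\le\eta/2$ and $v\in B(u,\eta)$, and the paper in any case invokes the Lipschitz constant on the larger ball $B(x_0^\tau,3D)$ containing all relevant points. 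A one-line remark fixing the domain of the Lipschitz assumption would make your proof airtight.
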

As a remark, in our case, we only need $f$ to have locally $L_2$-Lipschitz Hessian within the ball $B(u,\eta)$, which contains the line segment $\left[u - \eta\cdot\frac{u-v}{\|u-v\|}, u + \eta\cdot\frac{u-v}{\|u-v\|}\right]$. In our case, the line segment is contained in $B(x^\tau_0,3D)$.

\begin{lemma}[Lemma 3 of \cite{carmon2017convex}]
	Consider the $\cA_{AGP}$ subroutine, let $f$ have $L_1$-Lipschitz gradient (in $B(\bar x_0,3D)$). If $(u,v)\neq(\textbf{null},\textbf{null})$, and $f(b^{(1)})\geq f(\hat y_0)-\alpha\nu^2$ for some $\nu\geq0$, where $\hat y_0 = \bar x$ is the input to $\cA_{AGP}$. Then for $\forall i$,
	$$\|\hat y_i-\hat y_0\|\leq \nu,\qquad\|u-\hat y_0\|\leq \nu,\qquad\|\hat x_i - \hat y_0\|\leq 3\nu,\qquad\|u-v\|\leq 4\nu.$$ 
\end{lemma}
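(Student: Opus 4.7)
The plan is to derive all four distance bounds from a single functional decrement estimate: every point in $\{\hat y_1,\ldots,\hat y_{t-1},u\}$ has $f$-value within $\alpha\nu^2$ of $f(\hat y_0)$, and the quadratic regularizer built into $\hat f$ converts this into the desired bound on distance to $\hat y_0$. The Lipschitz gradient hypothesis on $B(\bar x_0,3D)$ enters only through the prerequisites of Lemma \ref{lemma:NC-Pair}, not directly in the distance arithmetic.

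First I would invoke inequality \eqref{cor:NC-Pair-1} of Lemma \ref{lemma:NC-Pair}, which applies precisely when $(u,v)\neq(\textbf{null},\textbf{null})$ (Flag $=2$ or $4$), to obtain
$$\max\bigl\{\hat f(\hat y_1),\ldots,\hat f(\hat y_{t-1}),\hat f(u)\bigr\}\le \hat f(\hat y_0)=f(\hat y_0),$$
where the final equality uses $\hat y_0=\bar x$, so that the regularizer $\alpha\|\cdot-\bar x\|^2$ vanishes at $\hat y_0$. Expanding $\hat f(b)=f(b)+\alpha\|b-\hat y_0\|^2$ for any $b$ in the set above gives $\alpha\|b-\hat y_0\|^2\le f(\hat y_0)-f(b)$. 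Since $b^{(1)}$ is the $f$-minimizer over that same set, $f(b)\ge f(b^{(1)})\ge f(\hat y_0)-\alpha\nu^2$ by hypothesis, and therefore $\|b-\hat y_0\|\le\nu$. This produces exactly the first two bounds $\|\hat y_i-\hat y_0\|\le\nu$ (trivially also for $i=0$) and $\|u-\hat y_0\|\le\nu$.

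Next I would bound the momentum iterates. Using $\hat x_0=\hat y_0$ together with the update $\hat x_i=\hat y_i+\omega(\hat y_i-\hat y_{i-1})$ for $i\ge 1$ and $\omega=(\sqrt{\kappa}-1)/(\sqrt{\kappa}+1)\in[0,1)$, two applications of the triangle inequality give
$$\|\hat x_i-\hat y_0\|\le\|\hat y_i-\hat y_0\|+\omega\bigl(\|\hat y_i-\hat y_0\|+\|\hat y_{i-1}-\hat y_0\|\bigr)\le 3\nu.$$
Finally, Algorithm \ref{alg:NC-Pair} always selects $v$ from $\{\hat x_0,\ldots,\hat x_{t-1}\}$, so combining with the previous bound yields $\|u-v\|\le\|u-\hat y_0\|+\|\hat y_0-v\|\le\nu+3\nu=4\nu$.

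The main obstacle is the Flag $=4$ branch, where the NC-pair may involve $u=w$ with $w=\hat y_t-\hat L_1^{-1}\nabla \hat f(\hat y_t)$, a vector that is not one of the indexed iterates $\hat y_i$. To keep Step~2 applicable to $b=u=w$ one must independently verify $\hat f(w)\le\hat f(\hat y_0)$; fortunately this is already established inside the proof of Lemma \ref{lemma:NC-Pair} via the projected descent inequality $\hat f(w)\le \hat f(\hat y_t)-\tfrac{1}{2\hat L_1}\|\nabla \hat f(\hat y_t)\|^2\le\hat f(\hat y_0)$, so no new argument is required. The choice of radius $3D$ in the Lipschitz hypothesis is the self-consistent companion of the output bound $\|\hat x_i-\hat y_0\|\le 3\nu$: since $\hat y_0\in X\subset B(\bar x_0,D)$, the iterates $\hat x_i$ used in the descent lemma remain in $B(\bar x_0,3D)$, closing the circle.
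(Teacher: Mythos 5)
Your proof is correct. Note that the paper itself does not prove this lemma at all: it simply cites Lemma 3 of \cite{carmon2017convex} and asserts in a remark that the projection steps and other adaptations ``are irrelevant to the proof,'' so your self-contained derivation is a genuine addition rather than a reproduction. Your route is essentially the mechanism of the original Carmon et al.\ argument, but you anchor it entirely in this paper's own machinery: inequality \eqref{cor:NC-Pair-1} of Lemma \ref{lemma:NC-Pair} gives $\hat f(b)\le \hat f(\hat y_0)=f(\hat y_0)$ for every $b\in\{\hat y_1,\dots,\hat y_{t-1},u\}$, the definition of $b^{(1)}$ together with the hypothesis gives $f(b)\ge f(\hat y_0)-\alpha\nu^2$, and the regularizer $\alpha\|\cdot-\hat y_0\|^2$ in $\hat f$ converts the difference into $\|b-\hat y_0\|\le\nu$; the bounds on $\hat x_i$ and on $\|u-v\|$ then follow from the momentum identity with $\omega\in(0,1)$ and from $v\in\{\hat x_0,\dots,\hat x_{t-1}\}$. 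This has the additional benefit of actually substantiating the paper's unproved claim that the projection modifications do not matter, since the only place the dynamics enter is through Lemma \ref{lemma:NC-Pair}, which the paper does reprove for the projected updates. Your handling of the Flag $=4$ case with $u=w$ is also right, and is in fact already subsumed by \eqref{cor:NC-Pair-1}, which bounds $\hat f(u)$ regardless of whether $u$ is an indexed iterate. One cosmetic caveat: the statement's ``for $\forall i$'' cannot literally include $i=t$ in the Flag $=2$ branch, where $\hat f(\hat y_t)>\hat f(\hat y_0)$ and $\hat y_t$ lies on $\partial X$; your implicit restriction to $i\le t-1$ is the correct reading and is all that is ever used downstream (in Case 3 of the proof of Lemma \ref{lemma:condition-AGP}).
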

One remark is that $\|u-\hat y_0\|\leq \nu$ is an intermediate result in the proof of Lemma 3 of \cite{carmon2017convex}, we present this explicitly in the lemma for our need. Another remark is that although our $\cA_{AGP}$ differs from its counterpart in \cite{carmon2017convex} by using the projection steps and a few other adaptations, these changes are irrelevant to the proof of the lemma. Therefore, the proof of Lemma 3 \cite{carmon2017convex} is still valid for our $\cA_{AGP}$.

\section{Proof of Lemma 5.1}
First, we provide a detailed version of Lemma 5.1 as follows.\vspace{0.2cm}\\ 
\textbf{Lemma 5.1} \emph{Suppose $f(\cdot)$ is $L$-smooth adaptable to 
	$h(x) \!=\! \alpha\|x\|^d \!+\! \beta\|x\|^2$ for some $\alpha,\beta\!>\!0$ and $d\geq4$. Define $D_h(y,x) \!=\! h(y)-\nabla f(x)^{\!\top}\!(y-x)-h(x)$ and let $x^{k+1}\!:=\!\argmin_x \!\nabla f(x^k)^{\!\top}\!(x\!-\!x^k) \!+\! LD_h(x^k)$ be a BPG update.  Denote $\delta_k\!:=\!x^{k+1}\!-\!x^{k}$ and assume w.l.o.g. that $\|x^k\|\geq 1$, then for sufficiently small $\|\delta_k\|\ll 1$, we have
	\begin{equation}
		\label{0-1}
		\delta_k +\cO\left(\|\delta_k\|^2\right) =  \left(\frac{\alpha d(d-2)C_0^{-1}}{1+C_0^{-1}\alpha d(d-2)}\!\cdot\! \frac{x^k(x^k)^{\!\top}}{\|x^k\|^2}  - I\right)\!\!\cdot\!  \frac{\nabla f(x^k)}{LC_0\|x^k\|^{d-2}}
	\end{equation}
	where $C_0 = \alpha d + \frac{2\beta}{\|x^k\|^{d-2}}\in [\alpha d,\alpha d+2\beta] $. As a result, when $\|\delta_k\|$ is small enough s.t. $\|\delta_k\|\leq\frac{\|x^k\|}{d-2}$ and the norm of the $\mathcal{O}(\|\delta_k\|^2)$ residual vector is less than $\frac{\|\delta_k\|}{2}$, we have 
	\begin{equation}
		\label{0-1.5}
		\frac{L\alpha d}{4}\leq\frac{\|\nabla f(x^k)\|}{\|x^k\|^{d-2}}\big/\|\delta_k\|\leq\frac{3L(d-1)(\alpha d+2\beta)}{2}
	\end{equation}
	and 
	\begin{equation}
		\label{0-2}
		\frac{L^2\alpha^2d^2}{16(2\beta+3\alpha d(d-1))}\leq\frac{\|\nabla f(x^k)\|^2}{\|x^k\|^{d-2}}\big/D_h(x^{k+1},x^k)\leq\frac{36L^2(d-1)^2(\alpha d+2\beta)^2}{8\beta+\alpha d}.
\end{equation} }
We remark that the assumption $\|\delta_k\|\ll 1 \leq \|x^k\|$ is quite natural. In fact, if one expects BPG to converge, then $\delta_k\to0$ as $k\to+\infty$. 
\begin{proof}
	First, by direct computation, we have 
	$$\nabla h(x) = \alpha d\|x\|^{d-2}\cdot x + 2\beta x,$$
	$$\nabla^2 h(x) = \alpha d(d-2)\|x\|^{d-4}\cdot xx^\top + (2\beta +\alpha d\|x\|^{d-2})\cdot I.$$ 
	Note that BPG subproblem can be rewritten as 
	$$\delta_k = \argmin_\delta\,\, L^{-1}\nabla f(x^k)^\top\delta + h(x^k+\delta) - \nabla h(x^k)^\top \delta - h(x^k).$$
	The KKT condition of this problem gives 
	$$L^{-1}\nabla f(x^k) + \alpha d\|x^k+\delta_k\|^{d-2}\cdot (x^k+\delta_k) + 2\beta (x^k+\delta_k) - \alpha d\|x^k\|^{d-2}\cdot x^k - 2\beta x^k=0.$$
	Dividing the equation by $\|x^k\|^{d-2}$ gives 
	\begin{equation}
		\label{1}
		\alpha d\left\|\frac{x^k}{\|x^k\|}+\frac{\delta_k}{\|x^k\|}\right\|^{d-2}\cdot (x^k+\delta_k) - \alpha d\cdot x^k + \frac{2\beta \delta_k}{\|x^k\|^{d-2}}=  \frac{-\nabla f(x^k)}{L\cdot\|x^k\|^{d-2}}. 
	\end{equation}
	Consider the function $\psi(x):=\|x\|^{d-2}$, then we have $\nabla \psi(x) = (d-2)\|x\|^{d-4}\cdot x$. Define $C_2=\sup_{x:\|x\|\leq 2} \|\nabla^2\psi(x)\|$. Then we have 
	$$E_k:=\psi(y) - \nabla\psi(x)^\top(y-x)-\psi(x),\quad |E_k|\leq \frac{C_2}{2}\|y-x\|^2, \quad \forall \|x\|\leq 2, \|y\|\leq2. $$  
	Substitute $x = \frac{x^k}{\|x^k\|}$ and $y=\frac{x^k}{\|x^k\|}+\frac{\delta_k}{\|x^k\|}$ into the above inequality yields 
	\begin{eqnarray}
		\left\|\frac{x^k}{\|x^k\|}+\frac{\delta_k}{\|x^k\|}\right\|^{d-2}\cdot (x^k+\delta_k) & = & \left(\left\|\frac{x^k}{\|x^k\|}\right\|^{d-2} + (d-2)\left\|\frac{x^k}{\|x^k\|}\right\|^{d-4} \cdot \frac{\delta_k^\top x^k}{\|x^k\|^2}+E_k\right)\cdot (x^k+\delta_k)\nonumber\\
		& = & x^k+\delta_k + (d-2)\frac{x^k(x^k)^\top}{\|x^k\|^2}\delta_k + \cO(\|\delta_k\|^2).\nonumber
	\end{eqnarray} 
	Substituting the above equation to \eqref{1} yields
	$$\left(\left(\alpha d + \frac{2\beta}{\|x^k\|^{d-2}}\right)\cdot I + \frac{\alpha d(d-2)}{\|x^k\|^2}\cdot x^k(x^k)^\top\right)\delta_k + \cO(\|\delta_k\|^2) = \frac{-\nabla f(x^k)}{L\|x^k\|^{d-2}}$$
	Then applying Sherman-Morrison formula to invert the matrix provides \eqref{0-1}. Note that 
	$$\frac{1}{d-1}\leq \left\|\left(\frac{\alpha d(d-2)C_0^{-1}}{1+C_0^{-1}\alpha d(d-2)}\!\cdot\! \frac{x^k(x^k)^{\!\top}}{\|x^k\|^2}  - I\right)\right\|\leq 2,$$
	when $\|\delta_k\|$ is small enough so that the norm of the $\mathcal{O}(\|\delta_k\|^2)$ residual vector is less than $\frac{\|\delta_k\|}{2}$, \eqref{0-1} indicates that
	$$\frac{\|\delta_k\|}{2}\leq \frac{2\|\nabla f(x^k)\|}{LC_0\|x^k\|^{d-2}}\qquad\mbox{and}\qquad\frac{3\|\delta_k\|}{2}\geq \frac{\|\nabla f(x^k)\|}{(d-1)LC_0\|x^k\|^{d-2}}.$$
	Then further applying the fact that $C_0=\alpha d+\frac{2\beta}{\|x^k\|^{d-2}}\in[\alpha d, \alpha d+2\beta]$ when $\|x^k\|\geq 1$ proves \eqref{0-1.5}.
	Next, we analyze the Bregman distance that measures the convergence of BPG method. By Taylor's expansion, for some $\theta\in[0,1]$ we have 
	\begin{eqnarray}
		D_h(x^{k+1},x^k) & = & h(x^{k+1})-h(x^k)-\nabla h(x^k)^\top(x^{k+1}-x^k) \nonumber\\
		& = & \frac{1}{2}\delta_k^\top\nabla^2 h(x^k + \theta\delta_k)\delta_k\nonumber.
	\end{eqnarray} 
	Denote $\tilde{x} = x^k+\theta\delta_k$ for simplicity, then we have $$\nabla^2 h(\tilde{x}) = (2\beta + \alpha d\|\tilde{x}\|^{d-2}) I + \alpha d(d-2)\|\tilde{x}\|^{d-4}\cdot\tilde{x}\tilde{x}^\top.$$
	As we assume $\delta_k$ is small enough s.t. $\|\delta_k\| \leq \frac{\|x_k\|}{d-2}$ and $\theta\in[0,1]$, we have 
	$$\|\nabla^2h(\tilde{x})\|\leq 2\beta + \left(\alpha d+\alpha d(d-2)\right)\Big(1+\frac{1}{d-2}\Big)^{d-2}\|x^k\|^{d-2} \leq 2\beta + 3\alpha d(d-1)\|x^k\|^{d-2} $$
	and, given $d\geq4$, we have
	$$\nabla^2h(\tilde{x})\succeq \left(2\beta + \alpha d\left(1-\frac{1}{d-2}\right)^{d-2}\|x^k\|^{d-2}\right) I \succeq \left(2\beta + \frac{\alpha d}{4}\|x^k\|^{d-2}\right) I.$$
	Combining the bounds on $\nabla^2 h(\tilde{x})$ and \eqref{0-1.5}, we have 
	$$\frac{L^2\alpha^2d^2}{16(2\beta+3\alpha d(d-1))}\leq\frac{\|\nabla f(x^k)\|^2}{\|x^k\|^{d-2}}\big/D_h(x^{k+1},x^k)\leq\frac{36L^2(d-1)^2(\alpha d+2\beta)^2}{8\beta+\alpha d},$$
	which completes the proof. 
\end{proof}

\section{Implementation of the BPG subproblem.}
\label{appdx:BPG}
For an objective function $f$,  the BPG method solves the following subproblem in each iteration:
\begin{eqnarray*}
	x_{t+1} & = &  \argmin_{x} \langle \nabla f(x_t), x-x_t\rangle + L\cdot (h(x) - \langle \nabla h(x_t),x-x_t\rangle - h(x_t))\\
	& = & \argmin_{x} \langle \nabla f(x_t) + L\cdot \nabla h(x_t), x\rangle + \frac{L}{n}\cdot \|x\|^n + \frac{L}{2}\cdot \|x\|^2.
\end{eqnarray*}
Let $v_t = \nabla f(x_t) + L\cdot \nabla h(x_t)$. Then the optimal solution must be 
$$x_{t+1}=-\rho_t\cdot v_t$$ for some $\rho_t>0$ where $\rho_t$ is the positive root of the following equation
\begin{equation*}
	\rho_t = \argmin_{\rho>0} p(\rho):=-\|v_t\|^2\cdot\rho + \frac{L}{n}\cdot \|v_t\|^n\cdot\rho^n + \frac{L}{2}\cdot \|v_t\|^2\cdot\rho^2.
\end{equation*}
Equivalently, we can solve the nonlinear equation:
\begin{equation}
	\label{prob:BPG-sub}
	p'(\rho):=-\|v_t\|^2 + L\cdot \|v_t\|^n_2\cdot\rho^{n-1} + L\cdot \|v_t\|^2\cdot\rho = 0, \qquad \rho>0.
\end{equation}
It is straightforward that $p'(0) = -\|v_t\|^2\leq0$. Then we apply the following simple binary search to solve the above equation. So the running time of solving a single BPG subproblem is $\cO(\log(1/\epsilon_0))$. 

\begin{algorithm2e}[H]
	\DontPrintSemicolon  
	\caption{Solving \eqref{prob:BPG-sub}}
	\label{alg:BPG-sub}
	\textbf{Input:} An accuracy $\epsilon_0$.\\
	\textbf{Initialize:} Constants $\rho_{0} = 0$, $\rho_{1} = 1$.\\
	\While{$p'(\rho_1)<0$}{
		$\rho_0 = \rho_1, \; \rho_1 = 2\rho_1$.
	} 
	\While{$\rho_1-\rho_0>\epsilon_0$}{
		\eIf{$p'(\frac{\rho_0+\rho_1}{2})>0$}{
			$\rho_1 = \frac{\rho_0+\rho_1}{2}$.}{
			$\rho_0 = \frac{\rho_0+\rho_1}{2}$.}
	}
	\textbf{Output:} $\frac{\rho_0+\rho_1}{2}$.
\end{algorithm2e}

\section{Additional experiments}
In this appendix, we present some additional numerical experiments on the unsupervised linear linear autoencoder training problem and the supervised linear neural network training problem. 
\subsection{Unsupervised linear autoencoder training}
Consider the linear autoencoder training problem 
$$\mathop{\mathrm{minimize}}_{W_1,...,W_m}\,\, \Big\|X - \sigma\big(W_m\cdots\sigma\big(W_2\,\sigma\big(W_1 X\big)\big)\cdots\big)\Big\|^2,$$
where the activation function $\sigma(\cdot)$ is the identity mapping. We randomly pick $6,000$ data points from the ijcnn1 dataset\footnote{\url{https://www.csie.ntu.edu.tw/~cjlin/libsvmtools/datasets/binary.html}}. Each data point has $22$ features and the whole data matrix has size $X\in\RR^{22\times6000}$. We set the number of layers to be $m = 5$, with weights:
$W_5\in\RR^{22\times10}, W_4\in\RR^{10\times4}, W_3\in\RR^{4\times10}, W_2\in\RR^{10\times20},  W_1\in\RR^{20\times22}.$

\begin{figure*}[h]
	\centering 
	\includegraphics[width=0.25\linewidth]{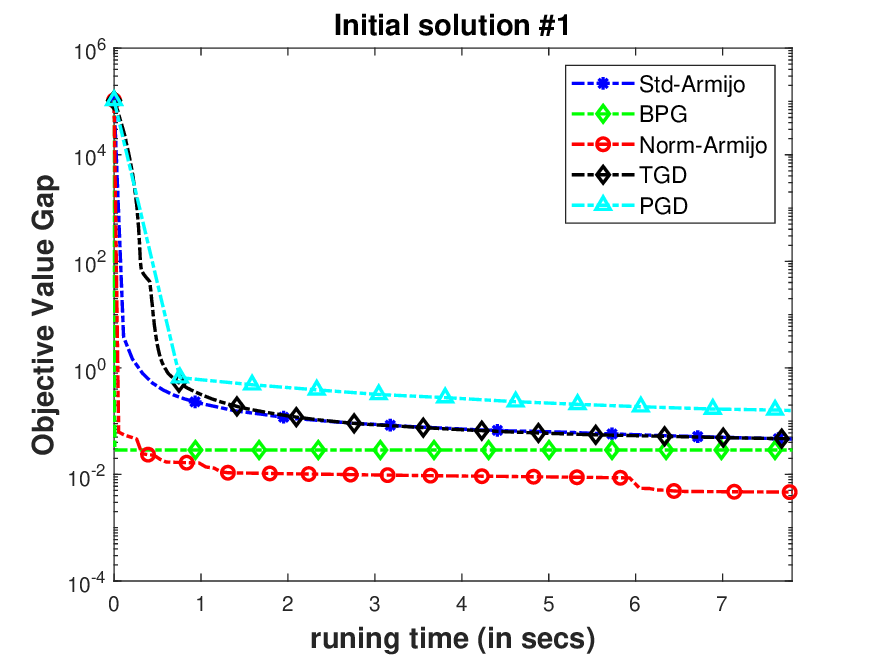}\hspace{-0.18cm}
	\includegraphics[width=0.25\linewidth]{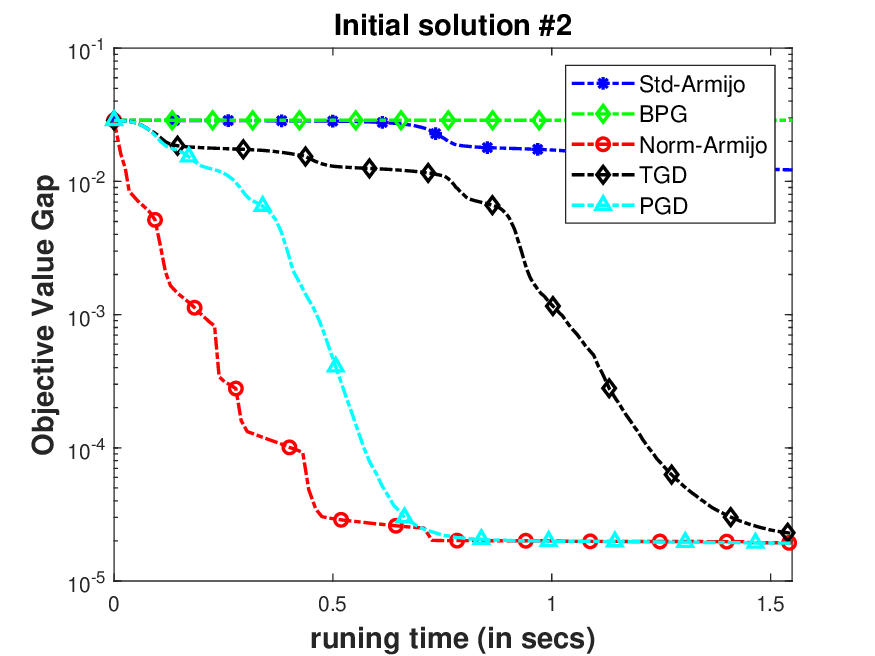}\hspace{-0.18cm}
	\includegraphics[width=0.25\linewidth]{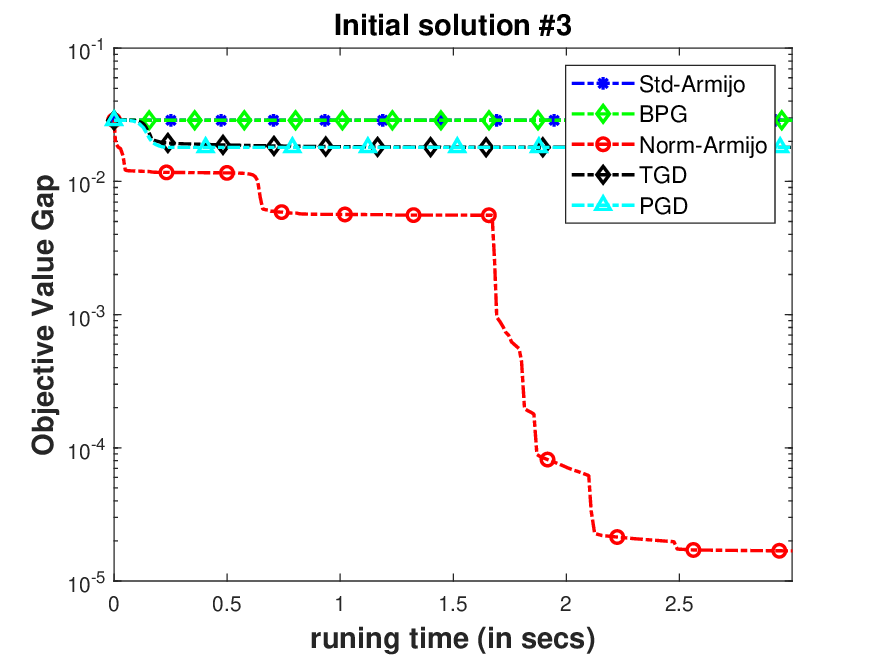}\hspace{-0.18cm}
	\includegraphics[width=0.25\linewidth]{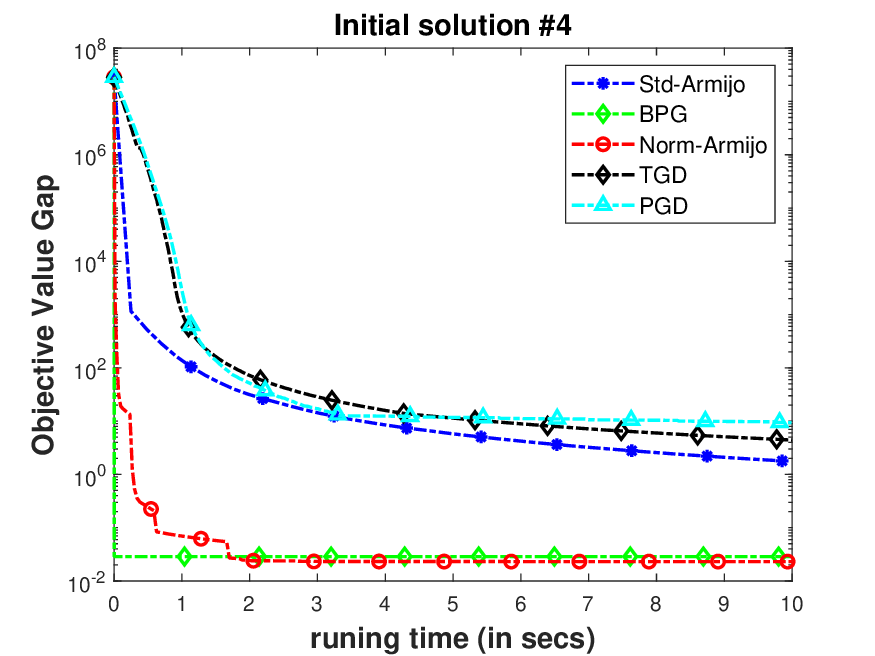}\\[1ex]
	\vfill  
	\includegraphics[width=0.255\linewidth]{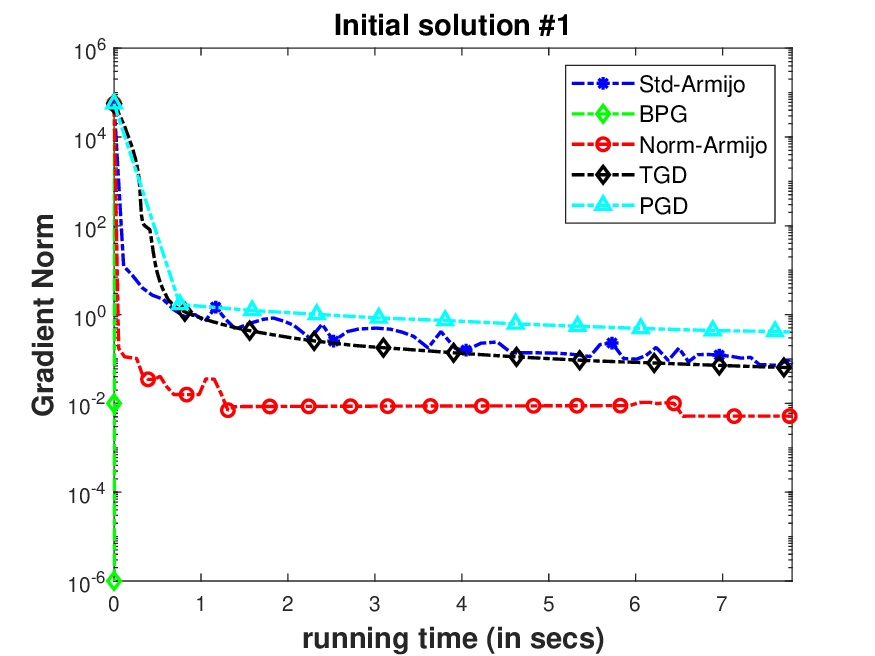} \hspace{-0.39cm}
	\includegraphics[width=0.255\linewidth]{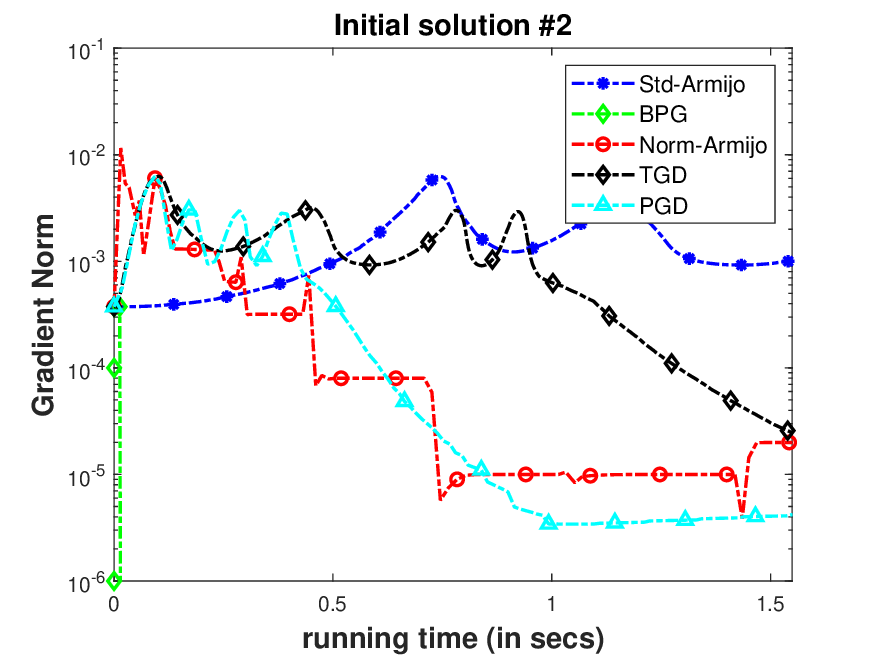}
	\hspace{-0.39cm}
	\includegraphics[width=0.255\linewidth]{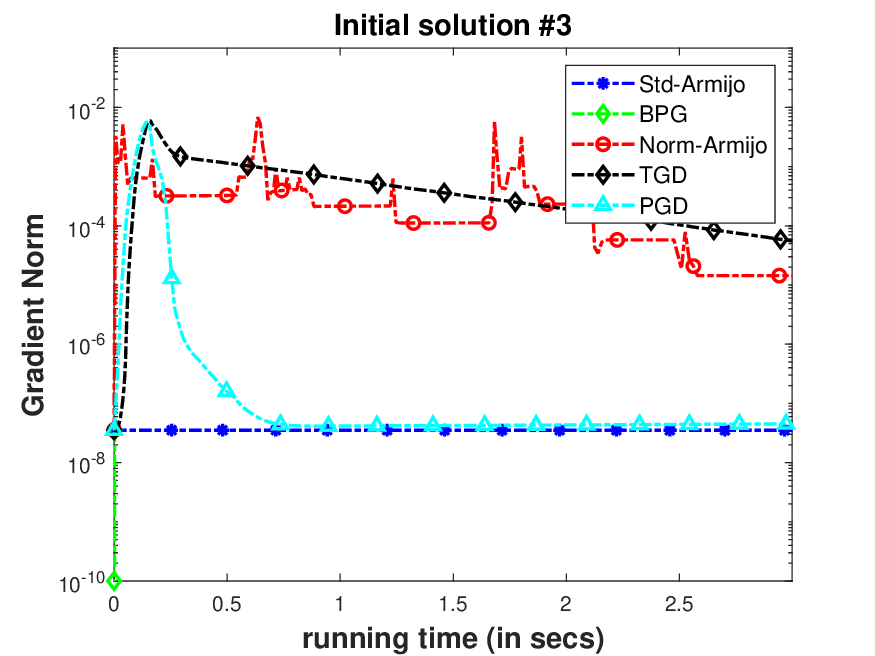}
	\hspace{-0.37cm}
	\includegraphics[width=0.255\linewidth]{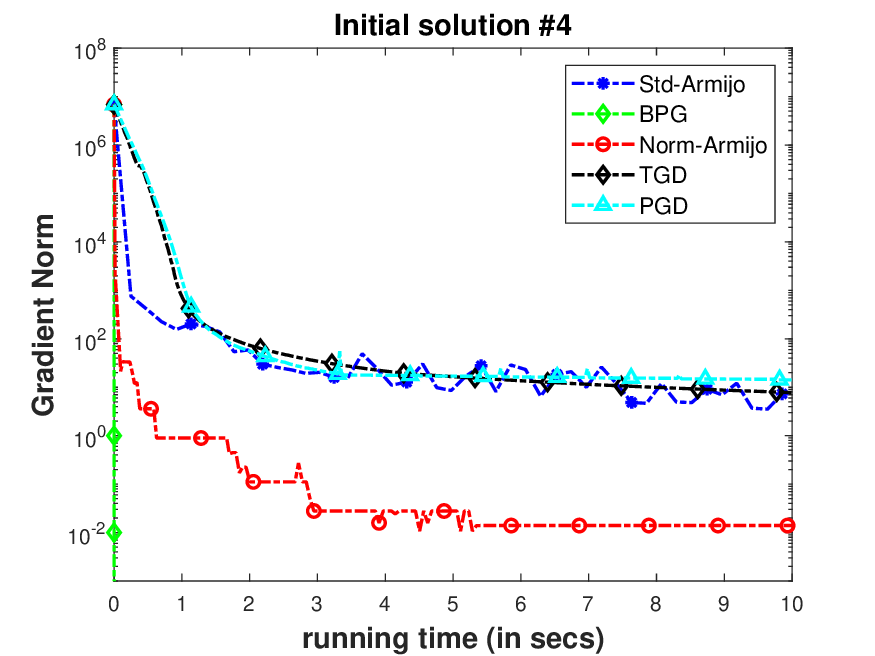}
	\caption{Numerical experiments on unsupervised training of linear autoencoder.}
	\label{fig:autoencoder}
\end{figure*}

\begin{table} 
	\begin{tabular}{ |p{2.5cm}||p{1.35cm}|p{1.35cm}|p{1.5cm}| p{1.4cm}||p{1.35cm}|p{1.35cm}|p{1.35cm}| p{1.2cm}| }
		\hline
		\multicolumn{1}{|c||}{}& \multicolumn{4}{|c||}{Initial solution entrywise$\sim$$\mathcal{U}$[0,0.001]} &  \multicolumn{4}{|c|}{Initial solution entrywise$\sim$$\mathcal{U}$[0,0.01]}\\
		\hline
		\multicolumn{1}{|c||}{}&\multicolumn{2}{|c|}{Gradient norm} &\multicolumn{2}{|c||}{Function value gap} &\multicolumn{2}{|c|}{Gradient norm} &\multicolumn{2}{|c|}{Function value gap}  \\
		\hline
		& $\,\,\,\,$best & average & $\,\,\,\,$best & average & $\,\,\,\,$best & average & $\,\,\,\,$best & average\\
		\hline
		Std-Armijo         & 8.1e$-$12   & 1.1e$-$11  & 2.9e$-$2 &  2.9e$-$2  & 5.4e$-$4   & 7.3e$-$3  & 8.2e$-$4 &  1.4e$-$2 \\
		BPG              &   \bf0  &  \bf0 & 2.9e$-$2 &  2.9e$-$2 & \bf0  &  \bf0 & 2.9e$-$2 &  2.9e$-$2 \\
		Norm-Armijo& 8.2e$-$12 & 1.1e$-$11&  \bf5.6e$-$3 & \bf1.4e$-$2 &8.1e$-$4 & 2.7e$-$3&  \bf4.4e$-$4 & \bf7.1e$-$4\\
		TGD            & 8.2e$-$12 & 1.1e$-$11   &  2.9e$-$2 & 2.9e$-$2  & 5.3e$-$3 & 8.6e$-$2   &  5.0e$-$2 & 1.4e$-$2\\ 
		PGD            & 1.5e$-$14 & 6.9e$-$14   &  1.8e$-$2 &1.8e$-$2 & 9.3e$-$\!3 & 7.4e$-$2       &  2.1e$-$2 &5.1e$-$2\\ 
		\hline
		\hline
		\multicolumn{1}{|c||}{}& \multicolumn{4}{|c||}{Initial solution entrywise$\sim$$\mathcal{U}$[0,0.1]} &  \multicolumn{4}{|c|}{Initial solution entrywise$\sim$$\mathcal{U}$[0,1]}\\
		\hline
		\multicolumn{1}{|c||}{}&\multicolumn{2}{|c|}{Gradient norm} &\multicolumn{2}{|c||}{Function value gap} &\multicolumn{2}{|c|}{Gradient norm} &\multicolumn{2}{|c|}{Function value gap}  \\
		\hline
		& $\,\,\,\,$best & average & $\,\,\,\,$best & average & $\,\,\,\,$best & average & $\,\,\,\,$best & average\\
		\hline
		Std-Armijo  & 3.2e$-$6   & 6.2e$-$6  & 1.8e$-$2 &  1.8e$-$2  & 4.0e$-$4   & 1.3e$-$2  & \bf4.7e$-$4 &  1.8e$-$2\\
		BPG         &   \bf0  &  \bf0 & 2.8e$-$2 &  2.8e$-$2 & \bf0  & \bf 0 & 2.9e$-$2 &  2.9e$-$2\\
		Norm-Armijo & 4.3e$-$7 & 2.6e$-$6&  \bf6.3e$-$\!9 & \bf2.3e$-$6 &  1.3e$-$3 & 4.2e$-$3&  1.5e$-$3 & \bf7.0e$-$3\\
		TGD            & 1.3e$-$6 & 6.0e$-$6   &  5.6e$-$3 & 9.9e$-$3 &1.1e$-$2 & 1.3e$-$1   &  2.6e$-$2 & 6.6e$-$2\\ 
		PGD            & 4.3e$-$7 & 6.2e$-$6       &  2.8e$-$8 & 2.1e$-$3 & 1.5e$-$2 & 1.0e$-$1       &  2.4e$-$2 &5.8e$-$2\\ 
		\hline
	\end{tabular}
	\caption{Numerical experiment of  training  linear autoencoder. We generate the initial solution elementwisely under 4 uniform distributions, i.e. $\mathcal{U}$[0,0.01], $\mathcal{U}$[0,0.1], $\mathcal{U}$[0,0.5] and $\mathcal{U}$[0,1]. $\qquad\qquad\,\,$$\qquad\qquad\qquad\,\,$}
	\label{table:autoencoder} \vspace{-0.2cm}
\end{table}

In this case, the optimal  value is unknown. To compute the objective value gap, we randomly initialize and run all $5$ methods for multiple times and enough number of iterations. Then we pick the minimal objective value obtained among all the observed iterations from all algorithms.  Figure \ref{fig:autoencoder} shows the curves of the tested algorithms from $4$ representative initial solutions. Table \ref{table:autoencoder} shows the results for initial solutions generated from 4 different distributions. The table contains the best and average result over 20 rounds of all methods. All the methods start from a same random initial point and run for 10 seconds per round. 

\subsection{Supervised linear neural network training}
For the third experiment, we consider the supervised training of deep linear neural network, where the activation function $\sigma(\cdot)$ is set to be the identity mapping. 
In this experiment, we use the same data matrix $X$ that has been used in the last section, which is randomly subsampled from ijcnn1 dataset. We choose the number of layers to be $m = 4$, with weights:  $W_4\in\RR^{1\times5}, W_3\in\RR^{5\times10}, W_2\in\RR^{10\times15}, W_1\in\RR^{15\times22}.$
We construct the random label $Y$ by first randomly generating the matrices $W_1^*,...,W_4^*$ and then setting $Y = W_4^*\cdots W_1^*X$. Thus the optimal value is 0. 

The way we present the numerical result is the same as that of the linear autoencoder example, thus the details are omitted. The results are summarized in Fig. \ref{fig:dnn} and Table \ref{table:dnn}.
\begin{figure*}
	\centering 
	\includegraphics[width=0.25\linewidth]{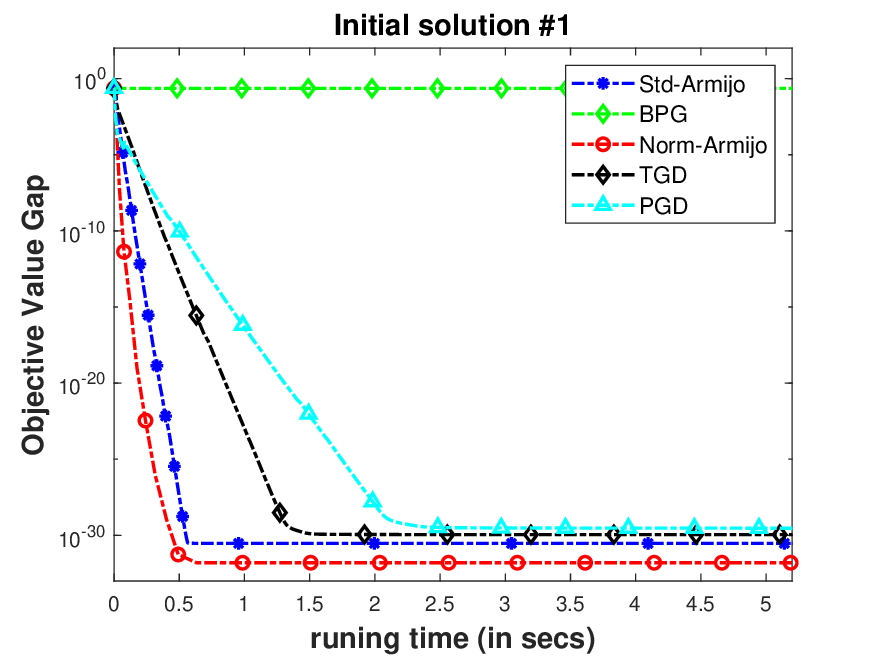}\hspace{-0.18cm}
	\includegraphics[width=0.25\linewidth]{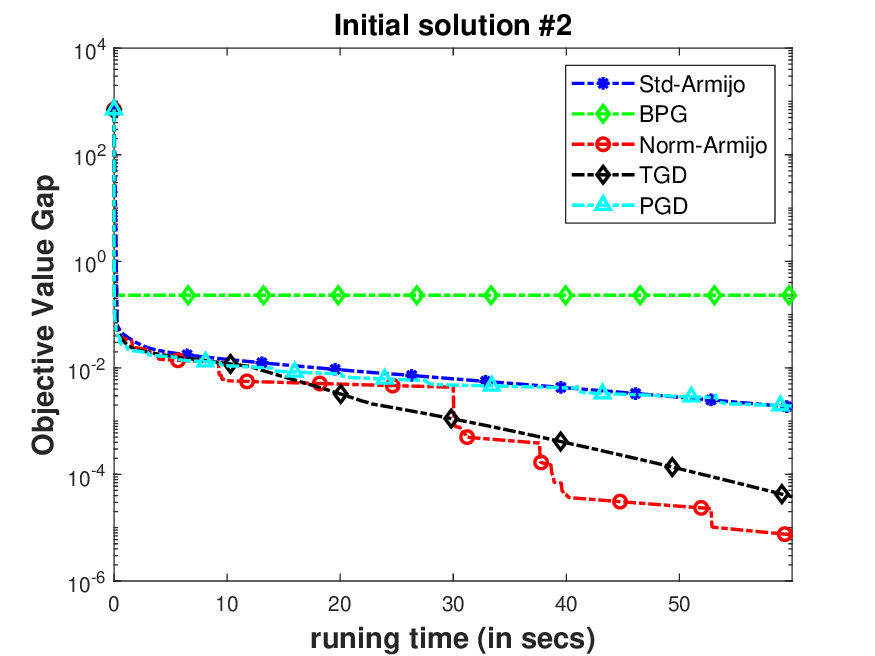}\hspace{-0.18cm}
	\includegraphics[width=0.25\linewidth]{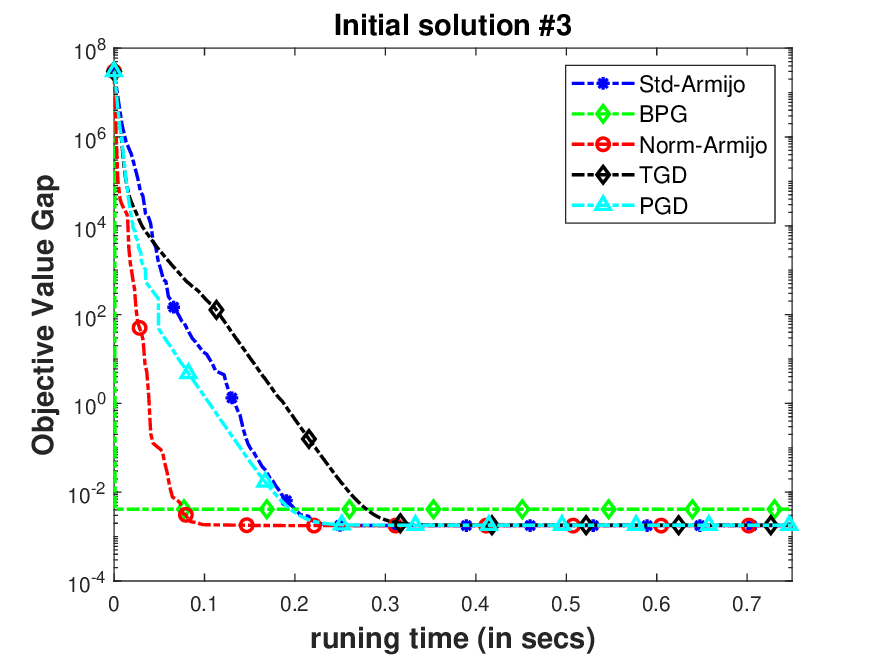}\hspace{-0.18cm}
	\includegraphics[width=0.25\linewidth]{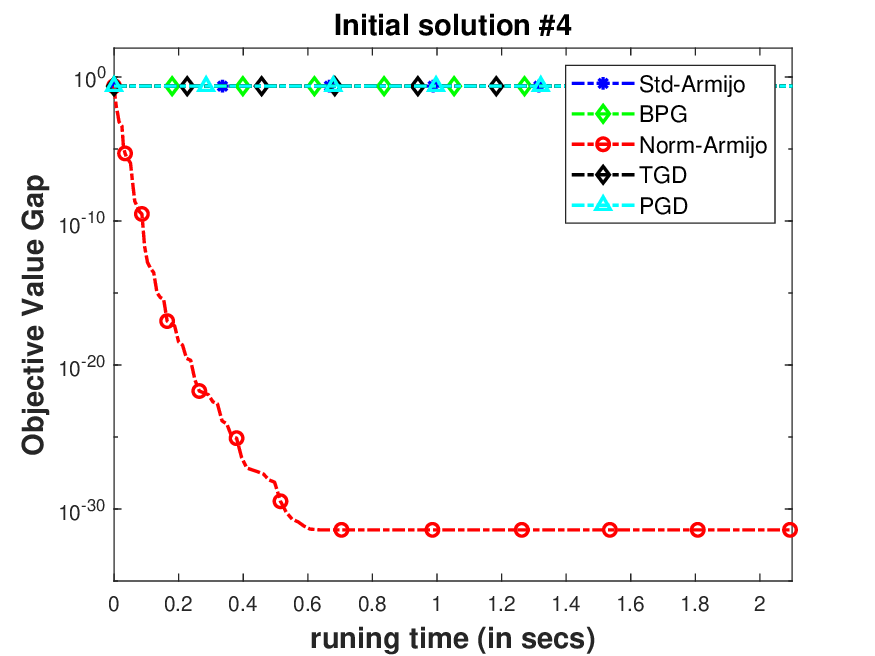}\\[1ex]
	\vfill  
	\includegraphics[width=0.255\linewidth]{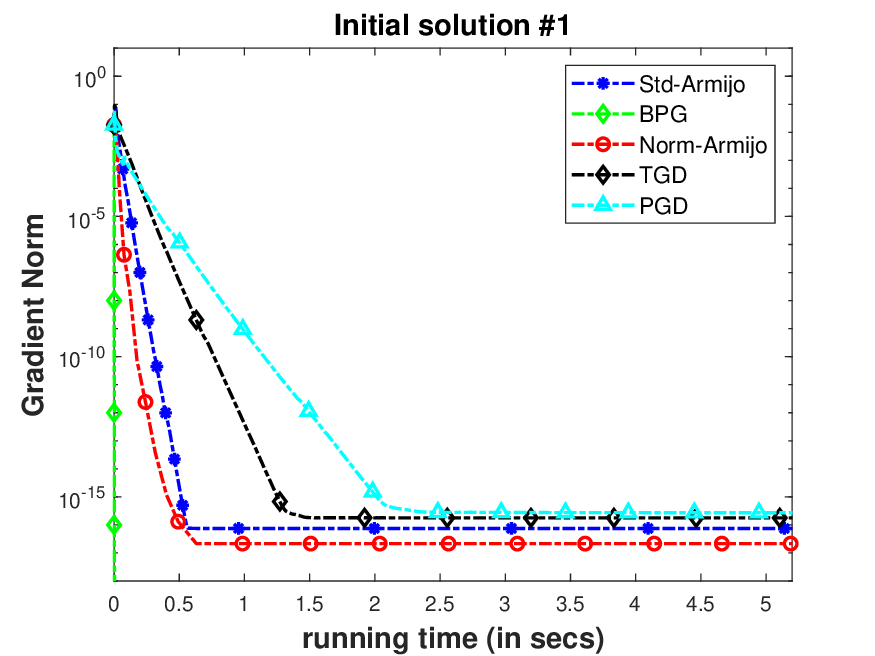} \hspace{-0.39cm}
	\includegraphics[width=0.255\linewidth]{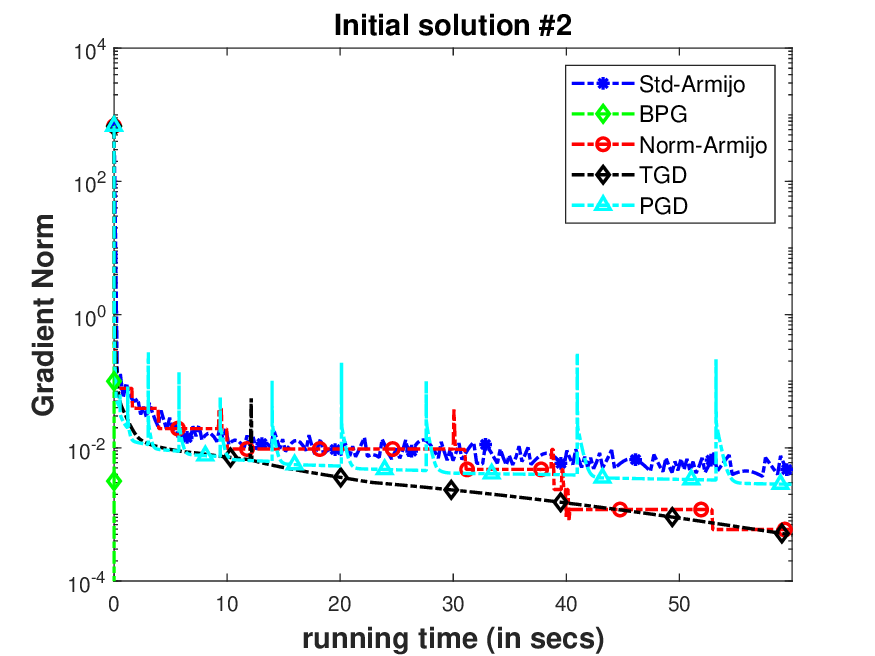}
	\hspace{-0.39cm}
	\includegraphics[width=0.255\linewidth]{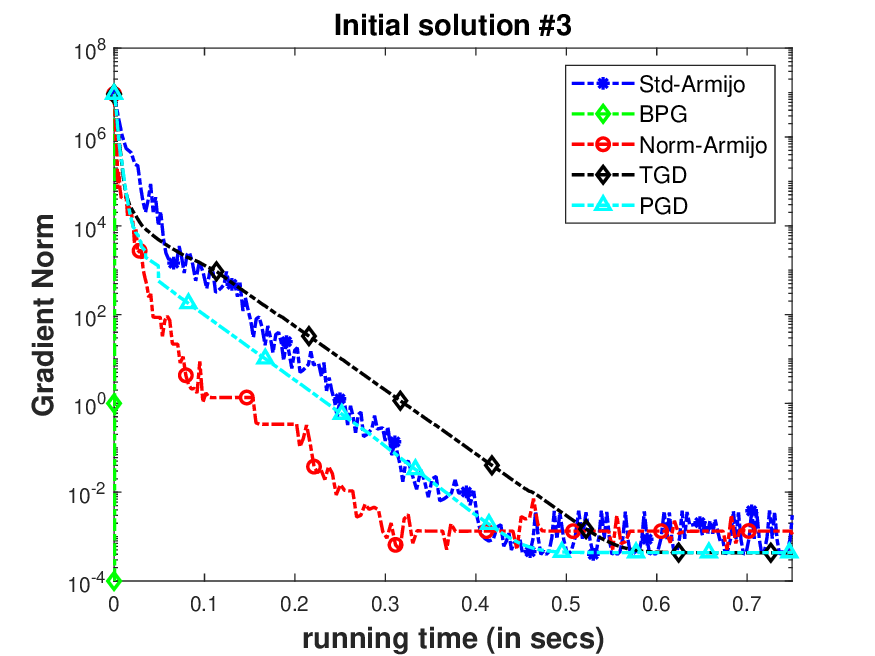}
	\hspace{-0.37cm}
	\includegraphics[width=0.255\linewidth]{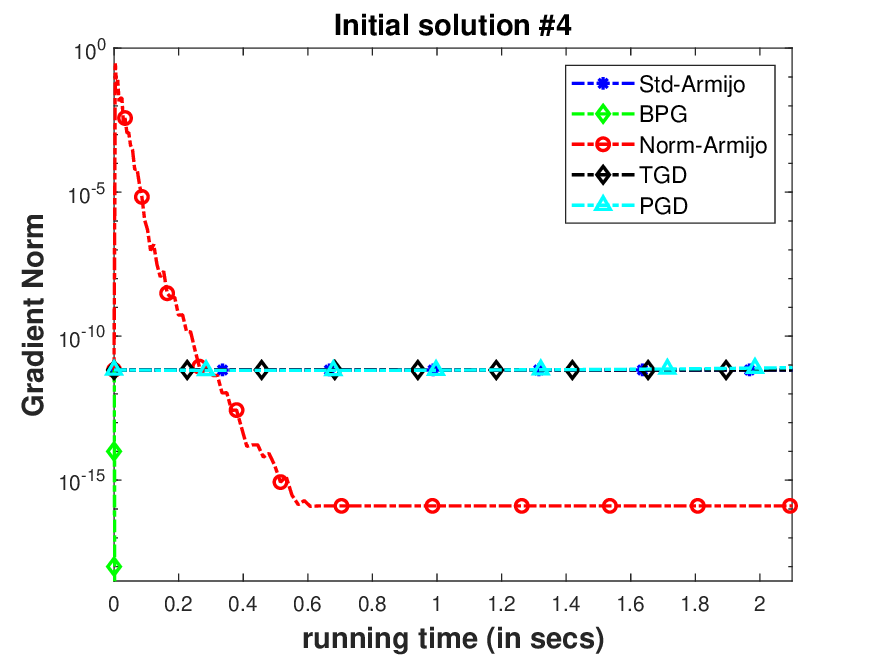}
	\caption{Numerical experiments on supervised training of linear neural networks.}
	\label{fig:dnn}
\end{figure*}

\begin{table}
	\begin{tabular}{ |p{2.5cm}||p{1.3cm}|p{1.3cm}|p{1.45cm}| p{1.4cm}||p{1.3cm}|p{1.3cm}|p{1.5cm}| p{1.45cm}| }
		\hline
		\multicolumn{1}{|c||}{}& \multicolumn{4}{|c||}{Initial solution entrywise$\sim$$\mathcal{U}$[0,0.001]} &  \multicolumn{4}{|c|}{Initial solution entrywise$\sim$$\mathcal{U}$[0,0.01]}\\
		\hline
		\multicolumn{1}{|c||}{}&\multicolumn{2}{|c|}{Gradient norm} &\multicolumn{2}{|c||}{Function value gap} &\multicolumn{2}{|c|}{Gradient norm} &\multicolumn{2}{|c|}{Function value gap}  \\
		\hline
		& $\,\,\,\,$best & average & $\,\,\,\,$best & average & $\,\,\,\,$best & average & $\,\,\,\,$best & average\\
		\hline
		Std-Armijo        & 3.0e$-$16   & 5.3e$-$8  & \bf1.3e$-$30 &  8.2e+0  & 5.5e$-$17 &  5.8e$-$17  & 1.1e$-$31 &  1.3e$-$31 \\
		BPG              &   \bf0  &  \bf0 & 1.8e$+$1 &  1.8e$+$1 & \bf0  &  \bf0 & 2.3e$-$1 &  2.3e$-$1\\
		Norm-Armijo & 3.5e$-$16 & 1.2e$-$15&  \bf1.3e$-$30 & \bf3.6e$-$30 & 8.7e$-$18 & 2.6e$-$17&  \bf1.3e$-$32 & \bf3.1e$-$32 \\
		TGD            & 2.3e$-$15 & 1.6e$-$8   &  5.1e$-$30& 2.7e+0  & 9.3e$-$16 & 1.1e$-$15   &  2.2e$-$29 & 3.3e$-$29\\ 
		PGD            & 1.9e$-$14 & 2.7e$-$14      &  2.7e$-$28 &5.4e$-$28& 2.1e$-$16 & 2.6e$-$16      &  1.1e$-$30 & 2.0e$-$30\\ 
		\hline
		\hline
		\multicolumn{1}{|c||}{}& \multicolumn{4}{|c||}{Initial solution entrywise$\sim$$\mathcal{U}$[0,0.1]} &  \multicolumn{4}{|c|}{Initial solution entrywise$\sim$$\mathcal{U}$[0,1]}\\
		\hline
		\multicolumn{1}{|c||}{}&\multicolumn{2}{|c|}{Gradient norm} &\multicolumn{2}{|c||}{Function value gap} &\multicolumn{2}{|c|}{Gradient norm} &\multicolumn{2}{|c|}{Function value gap}  \\
		\hline
		& $\,\,\,\,$best & average & $\,\,\,\,$best & average & $\,\,\,\,$best & average & $\,\,\,\,$best & average\\
		\hline
		Std-Armijo         & 9.6e$-$17  & 2.2e$-$14  & 1.5e$-$29 &  1.2e$-$22  & 3.8e$-$15   & 3.9e$-$2  & \bf5.6e$-$28 &  \bf3.5e$-$2\\
		BPG              &  \bf0  &  \bf0 & 3.4e$-$3 &  3.7e$-$3 &  \bf0  &  \bf0 & 2.3e$-$1 &  2.3e$-$1\\
		Norm-Armijo & 2.0e$-$19 & 6.7e$-$19 &  \bf3.6e$-$34 & \bf8.4e$-$34 &  9.8e$-$15 & 7.0e$-$2 &  2.7e$-$27 & 5.0e$-$2\\
		TGD            & 7.8e$-$19 & 1.3e$-$18   &  9.7e$-$34 & 2.0e$-$33 & 5.0e$-$2 & 8.9e$-$2   &  3.4e$-$2 & 1.3e$-$1\\ 
		PGD            & 2.6e$-$18 & 7.8e$-$18       &  7.5e$-$33 & 7.5e$-$32 & 3.8e$-$2 & 7.7e$-$2       &  5.1e$-$2 &1.1e$-$1\\ 
		\hline
	\end{tabular}
	\caption{Numerical experiment of supervised training of linear neural networks. We generate the initial solution elementwisely under 4 uniform distributions, i.e. $\mathcal{U}$[0,0.001], $\mathcal{U}$[0,0.01], $\mathcal{U}$[0,0.1] and $\mathcal{U}$[0,1].  }
	\label{table:dnn}
\end{table}

In this experiment, the behavior of BPG is similar to the previous examples. It often gets stuck at some poor stationary point while the other methods can find stationary points with lower objective values. In fact, in most cases, the best results returned by Std-Armijo and Norm-Armijo reach the machine precision, while TGD and PGD only fail to find a high precision solution for the $\mathcal{U}$[0,1] initialization. It is also worth noting that the two line search methods behave comparably in this experiment except for the average function value gap with the  $\mathcal{U}$[0,0.001] initialization. It can be observed that our Norm-Armijo approach behaves much better than the Std-Armijo under this initialization. An interesting illustration of cases where Std-Armijo fails can be found in the Figure \ref{fig:dnn} with initial point \# 4. In this figure, the initial solution is almost a stationary point (gradient norm less than $10^{-10}$). BPG quickly converges to the exact stationary point nearby; Std-Armijo, TGD and PGD almost do not move, while only Norm-Armijo quickly escapes this point and converges to the global optimal solution. The behavior of Norm-Armijo in this case is mainly attributed to the normalized search region, which not only serves the role of preventing too aggressive update when the gradient is large, but also allows bigger steps when the gradient is very small.

\bibliographystyle{plain}
\bibliography{ref_final}

\end{document}